\documentclass[11pt,reqno]{amsart}

\usepackage{caption,amsmath,amssymb,amsfonts,amsthm,latexsym,graphicx,multirow,color,hyperref,enumerate}
\usepackage[all]{xy}

\numberwithin{table}{section}

\oddsidemargin=0.3in
\evensidemargin=0.3in
\topmargin=-0.2in
\textwidth=15cm
\textheight=23cm

\newcommand{\A}{\mathrm{A}}    \newcommand{\Aut}{\mathrm{Aut}}
 \newcommand{\bbF}{\mathbb{F}} 
         
\newcommand{\D}{\mathrm{D}} 

\newcommand{\F}{\mathrm{F}} 
\newcommand{\G}{\mathrm{G}}       \newcommand{\GL}{\mathrm{GL}} \newcommand{\GO}{\mathrm{O}}

 \newcommand{\magma}{\textsc{Magma}}
\newcommand{\N}{\mathrm{N}}

\newcommand{\Pa}{\mathrm{P}}         \newcommand{\PSL}{\mathrm{PSL}}    \newcommand{\PSp}{\mathrm{PSp}}

  \newcommand{\SL}{\mathrm{SL}}  \newcommand{\Soc}{\mathrm{Soc}} \newcommand{\Sp}{\mathrm{Sp}}  \newcommand{\SU}{\mathrm{SU}}  \newcommand{\Sy}{\mathrm{S}}

\def\Z{{\bf Z}}

\newtheorem{theorem}{Theorem}[section]
\newtheorem{lemma}[theorem]{Lemma}

\newtheorem{problem}[theorem]{Problem}

\theoremstyle{definition}

\newtheorem*{remark}{Remark}

\begin{document}

\title[Factorizations of almost simple orthogonal groups in odd dimension]{Factorizations of almost simple orthogonal groups in odd dimension}

\author[Li]{Cai Heng Li}
\address{(Li) SUSTech International Center for Mathematics and Department of Mathematics\\Southern University of Science and Technology\\Shenzhen 518055\\Guangdong\\P.~R.~China}
\email{lich@sustech.edu.cn}

\author[Wang]{Lei Wang}
\address{(Wang) School of Mathematics and Statistics\\Yunnan University\\Kunming 650091\\Yunnan\\P.~R.~China}
\email{wanglei@ynu.edu.cn}

\author[Xia]{Binzhou Xia}
\address{(Xia) School of Mathematics and Statistics\\The University of Melbourne\\Parkville 3010\\VIC\\Australia}
\email{binzhoux@unimelb.edu.au}

\begin{abstract}
This is the third one in a series of papers classifying the factorizations of almost simple groups with nonsolvable factors.
In this paper we deal with orthogonal groups in odd dimension.

\textit{Key words:} group factorizations; almost simple groups

\textit{MSC2020:} 20D40, 20D06, 20D08
\end{abstract}

\maketitle

\section{Introduction}

An expression $G=HK$ of a group $G$ as the product of subgroups $H$ and $K$ is called a \emph{factorization} of $G$, where $H$ and $K$ are called \emph{factors}. A group $G$ is said to be \emph{almost simple} if $S\leqslant G\leqslant\Aut(S)$ for some nonabelian simple group $S$, where $S=\Soc(G)$ is the \emph{socle} of $G$. In this paper, by a factorization of an almost simple group we mean that none its factors contains the socle. The main aim of this paper is to solve the long-standing open problem:

\begin{problem}\label{PrbXia1}
Classify factorizations of finite almost simple groups.
\end{problem}

Determining all factorizations of almost simple groups is a fundamental problem in the theory of simple groups, which was proposed by Wielandt~\cite[6(e)]{Wielandt1979} in 1979 at The Santa Cruz Conference on Finite Groups. It also has numerous applications to other branches of mathematics such as combinatorics and number theory, and has attracted considerable attention in the literature.
In what follows, all groups are assumed to be finite if there is no special instruction.

The factorizations of almost simple groups of exceptional Lie type were classified by Hering, Liebeck and Saxl~\cite{HLS1987}\footnote{In part~(b) of Theorem~2 in~\cite{HLS1987}, $A_0$ can also be $\G_2(2)$, $\SU_3(3)\times2$, $\SL_3(4).2$ or $\SL_3(4).2^2$ besides $\G_2(2)\times2$.} in 1987.
For the other families of almost simple groups, a landmark result was achieved by Liebeck, Praeger and Saxl~\cite{LPS1990} thirty years ago, which classifies the maximal factorizations of almost simple groups. (A factorization is said to be \emph{maximal} if both the factors are maximal subgroups.)
Then factorizations of alternating and symmetric groups are classified in~\cite{LPS1990}, and factorizations of sporadic almost simple groups are classified in~\cite{Giudici2006}.
This reduces Problem~\ref{PrbXia1} to the problem on classical groups of Lie type.
Recently, factorizations of almost simple groups with a factor having at least two nonsolvable composition factors are classified in~\cite{LX2019}\footnote{In Table~1 of~\cite{LX2019}, the triple $(L,H\cap L,K\cap L)=(\Sp_{6}(4),(\Sp_2(4)\times\Sp_{2}(16)).2,\G_2(4))$ is missing, and for the first two rows $R.2$ should be $R.P$ with $P\leqslant2$.}, and those with a factor being solvable are described in~\cite{LX} and~\cite{BL2021}.

As usual, for a finite group $G$, we denote by $G^{(\infty)}$ the smallest normal subgroup of $X$ such that $G/G^{(\infty)}$ is solvable.
For factorizations $G=HK$ with nonsolvable factors $H$ and $K$ such that $L=\Soc(G)$ is a classical group of Lie type, the triple $(L,H^{(\infty)},K^{(\infty)})$ is described in~\cite{LWX}. Based on this work, in the present paper we characterize the triples $(G,H,K)$ such that $G=HK$ with $H$ and $K$ nonsolvable, and $G$ is an orthogonal group in odd dimension.

For groups $H,K,X,Y$, we say that $(H,K)$ contains $(X,Y)$ if $H\geqslant X$ and $K\geqslant Y$, and that $(H,K)$ \emph{tightly contains} $(X,Y)$ if in addition $H^{(\infty)}=X^{(\infty)}$ and $K^{(\infty)}=Y^{(\infty)}$. 
Our main result is the following Theorem~\ref{ThmOmega}.
Note that it is elementary to determine the factorizations of $G/L$ as this group has relatively simple structure (and in particular is solvable).

\begin{theorem}\label{ThmOmega}
Let $G$ be an almost simple group with socle $L=\Omega_{2m+1}(q)$, where $q$ is odd and $m\geqslant3$, and let $H$ and $K$ be nonsolvable subgroups of $G$ not containing $L$. Then $G=HK$ if and only if (with $H$ and $K$ possibly interchanged) $G/L=(HL/L)(KL/L)$ and $(H,K)$ tightly contains $(X^\alpha,Y^\alpha)$ for some $(X,Y)$ in Table~$\ref{TabOmega}$ and $\alpha\in\Aut(L)$.
\end{theorem}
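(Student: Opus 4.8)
The plan is to prove the two implications separately, with the forward (``only if'') direction resting on the description of the triples $(\Soc(G),H^{(\infty)},K^{(\infty)})$ for almost simple classical groups obtained in \cite{LWX}; the contribution of the present argument is then to upgrade that description of the socle-cores to a description of the actual factors $H$ and $K$ when $\Soc(G)=\Omega_{2m+1}(q)$. Suppose first that $G=HK$ with $H,K$ nonsolvable and not containing $L$. Quotienting by $L$ gives $G/L=(HL/L)(KL/L)$ at once, so one of the two asserted conditions comes for free. Since $H/(H\cap L)$ embeds in $G/L$, which is abelian (indeed $\Out(\Omega_{2m+1}(q))\cong\mathrm{C}_2\times\mathrm{C}_f$ with $q=p^f$), the subgroup $H^{(\infty)}$ lies in $H\cap L$; being characteristic in $H$, it also satisfies $H^{(\infty)}\leqslant H\leqslant\Nor_{\Aut(L)}(H^{(\infty)})$, and likewise for $K$. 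Applying \cite{LWX}, up to interchanging $H$ with $K$ and applying some $\alpha\in\Aut(L)$ the pair $(H^{(\infty)},K^{(\infty)})$ lies in a short explicit list.

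For each such pair I would then: (i) identify $\Nor_{\Aut(L)}(H^{(\infty)})$ and $\Nor_{\Aut(L)}(K^{(\infty)})$ from the known subgroup structure of $G$ (Aschbacher classes: the nonsingular point stabilizers $N_1$ of type $\mathrm{O}_{2m}^{\pm}(q)$, the parabolics $P_k$, subfield and field-extension subgroups, and, for $m=3$, the subgroups $\G_2(q)$ acting on the natural module), thereby fixing the lattice in which $H$ and $K$ must lie; (ii) for each candidate pair $(H,K)$ in that lattice with $G/L=(HL/L)(KL/L)$, decide whether $G=HK$ using the identity $|G|\,|H\cap K|=|H|\,|K|$ together with explicit computation of $H\cap K$ and of $(HL\cap KL)/L$; and (iii) record the factorizations that survive as the rows $(X,Y)$ of Table~\ref{TabOmega}, chosen minimal so that every factor pair of $G$ tightly contains some $(X^\alpha,Y^\alpha)$.

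For the backward direction, take $(X,Y)$ in Table~\ref{TabOmega}, $\alpha\in\Aut(L)$, and $H\geqslant X^\alpha$, $K\geqslant Y^\alpha$ with $H^{(\infty)}=(X^{(\infty)})^\alpha$, $K^{(\infty)}=(Y^{(\infty)})^\alpha$ and $G/L=(HL/L)(KL/L)$; after replacing $G$ by a conjugate we may assume $\alpha=1$. The rows of Table~\ref{TabOmega} are arranged so that $L=(X\cap L)(Y\cap L)$ (most of these restrict from maximal factorizations of $L$ listed in \cite{LPS1990}), whence $L=(H\cap L)(K\cap L)$; combining this with $G/L=(HL/L)(KL/L)$ yields $G=HK$ provided the residual compatibility $(H\cap K)L/L=(HL/L)\cap(KL/L)$ holds, which is verified row by row for all admissible $H,K$, after which a direct order count gives $|HK|=|G|$.

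The difficulty is concentrated in step (ii) of the forward direction: while \cite{LWX} pins down the socle-cores, passing to the full factors requires controlling exactly how far each of $H$ and $K$ is forced to reach into $G/L$, and the ``twisting'' ambiguity — the factors are determined only up to conjugacy, so $L=(H\cap L)(K\cap L)$ may fail and must be replaced by a conjugated identity — has to be resolved separately in each configuration. On top of this, a finite list of small cases needs hands-on treatment, most prominently $m=3$, where $\Omega_7(q)$ carries the $\G_2(q)$-factorizations, together with $m=4$ and small $q$, for which I expect to rely in part on computation in \magma. Finally, organizing the answer so that Table~\ref{TabOmega} together with the ``tightly contains'' formalism lists every factorization exactly once is a secondary but genuine bookkeeping task.
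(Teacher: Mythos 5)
Your overall architecture matches the paper's: the backward direction is exactly ``each row of Table~\ref{TabOmega} is a factor pair of $L$, then apply the quotient criterion'' (Lemmas~\ref{LemXia01}--\ref{LemXia02}), and the forward direction starts from the classification of the triples $(L,H^{(\infty)},K^{(\infty)})$ in \cite{LWX}, with \magma\ for $\Omega_7(3)$. Two remarks on your backward direction: once $L=XY$ and $G/L=(HL/L)(KL/L)$ are in hand, $G=HK$ follows immediately, so your extra ``residual compatibility'' condition $(H\cap K)L/L=(HL/L)\cap(KL/L)$ is redundant (and could not be checked ``row by row'' anyway, since $H,K$ range over all overgroups with the prescribed soluble residuals); and the claim $L=XY$ is not merely a restriction of the maximal factorizations in \cite{LPS1990} --- rows such as $q^4{:}\Omega_4^-(q)\cdot\G_2(q)$, $\F_4(q)\cdot\Omega_{24}^-(q)$ and the $3^{6+4}$, $3^{15+6}$ rows need genuine constructions, which is where Sections~4--5 of the paper spend their effort. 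Deferring that is acceptable in a proposal.

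The genuine gap is in the forward direction, at the step you label (ii). The list coming from \cite{LWX} (Table~\ref{TabInftyOmega}) leaves $H^{(\infty)}$ badly underdetermined: in Rows~1, 9, 10 it only says $H^{(\infty)}=P.S$ with $P\leqslant R=q^{m(m-1)/2}.q^m$ unspecified, and in Row~2 it allows $H^{(\infty)}=\Omega_4^-(q)$, which yields no factorization and is absent from Table~\ref{TabOmega}. The heart of the paper's proof is precisely to resolve this: locate $H$ in a maximal subgroup (necessarily $\Pa_m[G]$, or $\G_2(q)$ when $m=3$) via \cite{LPS1990}; use transitivity of $H/P$ on hyperplanes of $U$ and \cite[Theorem~1.2]{LWX-Linear} to force $H/P$ to act irreducibly on $R/R'$; rule out $P\leqslant R'$ by a $p$-part order comparison inside $R'{:}T.\frac{q-1}{2}$; and conclude $P=R$ from $R=PR'=P\Phi(R)$. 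Separately, $H^{(\infty)}=\Omega_4^-(q)$ must be eliminated (it forces $H\leqslant\N_3^-[G]$, contradicting \cite{LPS1990}), and for $H^{(\infty)}=q^4{:}\Omega_4^-(q)$ one needs the uniqueness-of-conjugacy-class statement of Lemma~\ref{LemOmega20} to pin the pair down up to $\Aut(L)$-conjugacy. Your proposed remedy --- enumerate candidates between $H^{(\infty)}$ and $\Nor_{\Aut(L)}(H^{(\infty)})$ and test $|G|\,|H\cap K|=|H|\,|K|$ --- does not engage with any of this: the ambiguity sits \emph{inside} $H^{(\infty)}$ (which $P\leqslant R$, which conjugacy class relative to $K$), not among its overgroups, and computing $|H\cap K|$ presupposes exactly the relative-position information that is missing. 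Relatedly, your diagnosis that the difficulty lies in ``how far $H$ and $K$ reach into $G/L$'' is misplaced: the ``tightly contains'' formulation together with the hypothesis $G/L=(HL/L)(KL/L)$ renders the parts of $H$ and $K$ above $L$ irrelevant; all of the real work happens below the socle.
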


\begin{remark}
Here are some remarks on Tables~\ref{TabOmega}:
\begin{enumerate}[{\rm(I)}]
\item The column $Z$ gives the smallest almost simple group with socle $L$ that contains $X$ and $Y$. In other words, $Z=\langle L,X,Y\rangle$.
It turns out that $Z=XY$ for all pairs $(X,Y)$.
\item The groups $X$, $Y$ and $Z$ are described in the corresponding lemmas whose labels are displayed in the last column. 
\item The description of groups $X$ and $Y$ are up to conjugations in $Z$ (see Lemma~\ref{LemXia04}(b) and Lemma~\ref{LemXia03}).
\end{enumerate}
\end{remark}

\begin{table}[htbp]
\captionsetup{justification=centering}
\caption{$(X,Y)$ for orthogonal groups in odd dimension}\label{TabOmega}
\begin{tabular}{|l|l|l|l|l|l|}
\hline
Row & $Z$ & $X$ & $Y$ & Lemma\\
\hline
1 & $\Omega_{2m+1}(q)$ & $(q^{m(m-1)/2}.q^m){:}\SL_a(q^b)$ ($m=ab$), & $\Omega_{2m}^-(q)$ & \ref{LemOmega02}\\
 & & $(q^{m(m-1)/2}.q^m){:}\Sp_a(q^b)$ ($m=ab$) & & \\
 \hline
2 & $\Omega_7(q)$ & $\Omega_6^+(q)$, $\Omega_6^-(q)$, $\Omega_5(q)$, & $\G_2(q)$ & \ref{LemOmega03}, \ref{LemOmega04}, \ref{LemOmega11},\\
 & & $q^5{:}\Omega_5(q)$, $q^4{:}\Omega_4^-(q)$ & & \ref{LemOmega05}, \ref{LemOmega06}\\ 
\hline
3 & $\Omega_7(3^f)$ & $\SU_3(3^f)$, ${^2}\G_2(3^f)'$ ($f$ odd) & $\Omega_6^+(3^f)$ & \ref{LemOmega07}, \ref{LemOmega08}\\
4 & $\Omega_7(3^f)$ & $\SL_3(3^f)$ & $\Omega_6^-(3^f)$ & \ref{LemOmega07}\\
\hline
5 & $\Omega_{13}(3^f)$ & $\PSp_6(3^f)$ & $\Omega_{12}^-(3^f)$ & \ref{LemOmega09}\\
6 & $\Omega_{25}(3^f)$ & $\F_4(3^f)$ & $\Omega_{24}^-(3^f)$ & \ref{LemOmega10}\\
\hline
7 & $\Omega_7(3)$ & $3^4{:}\Sy_5$, $3^5{:}2^4{:}\A_5$, $3^4{:}\A_6$ & $\G_2(3)$ & \ref{LemOmega12}, \ref{LemOmega13}\\
8 & $\Omega_7(3)$ & $3^3{:}\SL_3(3)$, $\Omega_6^+(3)$, $\G_2(3)$ & $\A_9$, $\Sp_6(2)$ & \ref{LemOmega14}, \ref{LemOmega15}, \ref{LemOmega16}\\
9 & $\Omega_7(3)$ & $2^6{:}\A_7$, $\A_8$, $\A_9$, $2.\PSL_3(4)$, $\Sp_6(2)$ & $3^{3+3}{:}\SL_3(3)$ & \ref{LemOmega17}\\
10 & $\Omega_9(3)$ & $3^{6+4}{:}2.\Sy_5$, $3^{6+4}{:}8.\A_5$, $3^{6+4}{:}2^{1+4}.\A_5$ & $\Omega_8^-(3)$ & \ref{LemOmega18}\\
11 & $\Omega_{13}(3)$ & $3^{15+6}{:}\SL_2(13)$ & $\Omega_{12}^-(3)$ & \ref{LemOmega19}\\
\hline
\end{tabular}
\vspace{3mm}
\end{table}

\section{Preliminaries}

In this section we collect some elementary facts regarding group factorizations.

\begin{lemma}\label{LemXia01}
Let $G$ be a group, let $H$ and $K$ be subgroups of $G$, and let $N$ be a normal subgroup of $G$. Then $G=HK$ if and only if $HK\supseteq N$ and $G/N=(HN/N)(KN/N)$.
\end{lemma}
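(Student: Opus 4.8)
The plan is to prove the two implications separately, in both cases working through the canonical projection $\pi\colon G\to G/N$, whose kernel is $N$. The only facts I will use are the elementary identities $HN/N=\pi(H)$ and $KN/N=\pi(K)$, together with the observation that for any homomorphism and any subsets one has $\pi(H)\pi(K)=\pi(HK)=HKN/N$. Thus the condition $G/N=(HN/N)(KN/N)$ is nothing but the statement $\pi(G)=\pi(HK)$, equivalently $G=HKN$. Framing the equivalence in these terms reduces the whole lemma to a comparison between $HK$ and $HKN$.

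For the forward direction I would assume $G=HK$. Then $HK=G\supseteq N$ is immediate, and applying $\pi$ gives $G/N=\pi(G)=\pi(HK)=\pi(H)\pi(K)=(HN/N)(KN/N)$, which is exactly the right-hand side. This direction uses nothing beyond the identities above.

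For the converse I would assume $HK\supseteq N$ and $G/N=(HN/N)(KN/N)$. The second hypothesis yields $G=HKN$: every $g\in G$ satisfies $\pi(g)=\pi(h)\pi(k)$ for some $h\in H$ and $k\in K$, so $g\in hkN\subseteq HKN$, and the reverse inclusion $HKN\subseteq G$ is trivial. It then remains to prove $HKN=HK$, where $HK\subseteq HKN$ is obvious and the reverse inclusion is the real content. Given an element $hkn$ with $h\in H$, $k\in K$, $n\in N$, I would use normality of $N$ to write $kn=n'k$ with $n'=knk^{-1}\in N$, and then invoke $N\subseteq HK$ to express $n'=h'k'$ for some $h'\in H$ and $k'\in K$; this gives $hkn=hn'k=(hh')(k'k)\in HK$. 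Hence $HKN=HK$, and combined with $G=HKN$ we obtain $G=HK$.

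The argument is purely formal, so there is no substantial obstacle; the single point that genuinely uses both hypotheses is the reduction $HKN=HK$ in the converse. There it is essential both that $N$ is normal, so that the $N$-part can be slid past $k$, and that $N\subseteq HK$, so that the resulting factor in $N$ can be absorbed into $H$ and $K$. Since $HK$ is not a subgroup in general, neither hypothesis can be dropped, which is precisely why the lemma is stated with both conditions.
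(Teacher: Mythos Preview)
Your proof is correct and follows essentially the same route as the paper's: both directions agree, and in the converse you and the paper each obtain $G=HKN=HNK$ from the quotient hypothesis and then absorb $N$ into $HK$ using $N\subseteq HK$ together with normality. The only cosmetic difference is that the paper does this absorption at the set level via $HNK\subseteq H(HK)K=HK$, whereas you carry it out element by element.
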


\begin{proof}
If $G=HK$, then $HK\supseteq N$, and taking the quotient modulo $N$ we obtain
\[
G/N=(HN/N)(KN/N).
\]
Conversely, suppose that $HK\supseteq N$ and $G/N=(HN/N)(KN/N)$. Then 
\[
G=(HN)(KN)=HNK 
\]
as $N$ is normal in $G$. Since $N\subseteq HK$, it follows that $G=HNK\subseteq H(HK)K=HK$, which implies $G=HK$.
\end{proof}

Let $L$ be a nonabelian simple group. We say that $(H,K)$ is a \emph{factor pair} of $L$ if $H$ and $K$ are subgroups of $\Aut(L)$ such that $HK\supseteq L$. For an almost simple group $G$ with socle $L$ and subgroups $H$ and $K$ of $G$, Lemma~\ref{LemXia01} shows that $G=HK$ if and only if $G/L=(HL/L)(KL/L)$ and $(H,K)$ is a factor pair.
As the group $G/L$ has a simple structure (and in particular is solvable), it is elementary to determine the factorizations of $G/L$.
Thus to know all the factorizations of $G$ is to know all the factor pairs of $L$.
Note that, if $(H,K)$ is a factor pair of $L$, then any pair of subgroups of $\Aut(L)$ containing $(H,K)$ is also a factor pair of $L$.
Hence we have the following:

\begin{lemma}\label{LemXia02}
Let $G$ be an almost simple group with socle $L$, and let $H$ and $K$ be subgroups of $G$ such that $(H,K)$ contains some factor pair of $L$. Then $G=HK$ if and only if $G/L=(HL/L)(KL/L)$.
\end{lemma}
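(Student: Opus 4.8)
The plan is to derive the stated equivalence as an immediate corollary of Lemma~\ref{LemXia01} applied with $N=L$, by checking that the hypothesis on $(H,K)$ makes one of the two conditions in that lemma automatic. Recall that Lemma~\ref{LemXia01} (with $N=L$, which is legitimate since $L=\Soc(G)$ is normal in $G$) asserts that $G=HK$ if and only if both $HK\supseteq L$ and $G/L=(HL/L)(KL/L)$ hold. So to obtain Lemma~\ref{LemXia02} it suffices to show that, under the assumption that $(H,K)$ contains a factor pair of $L$, the containment $HK\supseteq L$ holds unconditionally; the equivalence then collapses to the single remaining condition $G/L=(HL/L)(KL/L)$.

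Concretely, I would first unpack the hypothesis: ``$(H,K)$ contains some factor pair'' means there is a factor pair $(X,Y)$ of $L$ with $X\leqslant H$ and $Y\leqslant K$, and by the definition of a factor pair $X,Y\leqslant\Aut(L)$ with $XY\supseteq L$. Since $X\subseteq H$ and $Y\subseteq K$, the product set $XY$ is contained in $HK$, whence $L\subseteq XY\subseteq HK$, i.e.\ $HK\supseteq L$. (This is exactly the observation recorded just before the lemma, that any pair containing a factor pair is again a factor pair.) With $HK\supseteq L$ established, I would invoke Lemma~\ref{LemXia01}: its forward direction gives that $G=HK$ implies $G/L=(HL/L)(KL/L)$ by passing to the quotient, while its converse direction gives that $G/L=(HL/L)(KL/L)$ together with the already-verified $HK\supseteq L$ implies $G=HK$. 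Combining the two directions yields $G=HK\iff G/L=(HL/L)(KL/L)$, which is precisely the assertion.

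There is essentially no obstacle here: the content is entirely carried by Lemma~\ref{LemXia01}, and the only genuine step is recognizing that the factor-pair hypothesis is designed exactly to supply the condition $HK\supseteq L$ for free. The one point warranting a line of care is the converse (``only if'') direction, since it is there that $HK\supseteq L$ is actually used---without the factor-pair hypothesis, the equality $G/L=(HL/L)(KL/L)$ alone would not force $G=HK$. The forward direction, by contrast, is a pure quotient computation and needs no hypothesis on $(H,K)$ at all.
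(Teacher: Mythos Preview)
Your proof is correct and matches the paper's approach: the paper does not give a separate proof but derives the lemma from the discussion immediately preceding it, which is exactly your argument via Lemma~\ref{LemXia01} with $N=L$ together with the observation that containing a factor pair forces $HK\supseteq L$. One minor terminological slip: in your final paragraph you label the implication $G/L=(HL/L)(KL/L)\Rightarrow G=HK$ as the ``only if'' direction, but in the biconditional as stated it is the ``if'' direction; the mathematics is unaffected.
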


In light of Lemma~\ref{LemXia02}, the key to determine the factorizations of $G$ with nonsolvable factors is to determine the minimal ones (with respect to the containment) among factor pairs of $L$ with nonsolvable subgroups.

\begin{lemma}\label{LemXia03}
Let $L$ be a nonabelian simple group, and let $(H,K)$ be a factor pair of $L$.
Then $(H^\alpha,K^\alpha)$ and $(H^x,K^y)$ are factor pairs of $L$ for all $\alpha\in\Aut(L)$ and $x,y\in L$.
\end{lemma}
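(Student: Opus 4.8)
The plan is to reduce both assertions to a single fact: $L$ is normal (indeed characteristic) in $\Aut(L)$, so $L^\alpha=L$ for every $\alpha\in\Aut(L)$, together with the defining property $HK\supseteq L$ of a factor pair.

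For the first assertion, I would note that conjugation by $\alpha$ is an automorphism of the overgroup $\Aut(L)$, whence $H^\alpha K^\alpha=(HK)^\alpha$. Since $HK\supseteq L$ and $L^\alpha=L$, this yields $H^\alpha K^\alpha\supseteq L$, so $(H^\alpha,K^\alpha)$ is a factor pair.

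For the second assertion, the key computation I would record is that, setting $z=xy^{-1}\in L$, one has $H^xK^y=x^{-1}Hx\,y^{-1}Ky=x^{-1}(HzK)y$. Because $z\in L\subseteq HK$, I can write $z=hk$ with $h\in H$ and $k\in K$, and then $HzK=H(hk)K=(Hh)(kK)=HK$. Substituting back gives $H^xK^y=x^{-1}(HK)y\supseteq x^{-1}Ly=L$, where the final equality uses that $x,y\in L$ forces $x^{-1}L=L=Ly$. Hence $(H^x,K^y)$ is a factor pair.

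I do not expect a genuine obstacle here: the statement is purely formal, and the only point needing a little care is the bookkeeping of one-sided cosets and the direction of conjugation, which is handled cleanly once the identity $H^xK^y=x^{-1}(HzK)y$ with $z=xy^{-1}$ is written down.
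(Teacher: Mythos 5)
Your proof is correct and follows essentially the same route as the paper's: conjugating the product for the first assertion, and for the second writing $xy^{-1}=hk$ with $h\in H$, $k\in K$ to get $H^xK^y=x^{-1}HKy\supseteq x^{-1}Ly=L$. No issues.
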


\begin{proof}
It is evident that $H^\alpha K^\alpha=(HK)^\alpha\supseteq L^\alpha=L$. Hence $(H^\alpha,K^\alpha)$ is a factor pair.
Since $xy^{-1}\in L\subseteq HK$, there exist $h\in H$ and $k\in K$ such that $xy^{-1}=hk$. Therefore, 
\[
H^xK^y=x^{-1}Hxy^{-1}Ky=x^{-1}HhkKy=x^{-1}HKy\supseteq x^{-1}Ly=L,
\]
which means that $(H^x,K^y)$ is a factor pair.
\end{proof}

The next lemma is~\cite[Lemma~2(i)]{LPS1996}.

\begin{lemma}\label{LemXia05}
Let $G$ be an almost simple group with socle $L$, and let $H$ and $K$ be subgroups of $G$ not containing $L$. If $G=HK$, then $HL\cap KL=(H\cap KL)(K\cap HL)$. 
\end{lemma}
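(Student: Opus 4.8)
The plan is to establish the (superficially more general) fact that the stated identity holds whenever $L$ is merely a normal subgroup of $G$ with $G=HK$; the almost simple hypotheses and the assumption that $H,K$ do not contain $L$ play no role in the argument. The key observation is that a factorization $g=hk$ of an element $g\in HL\cap KL$ automatically has $h\in KL$ and $k\in HL$.

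First I would dispose of the easy inclusion. Since $L\trianglelefteq G$, both $HL$ and $KL$ are subgroups of $G$, and hence so is $HL\cap KL$. As $H\cap KL\leqslant HL\cap KL$ (because $H\leqslant HL$ and $H\cap KL\leqslant KL$) and likewise $K\cap HL\leqslant HL\cap KL$, closure of the subgroup $HL\cap KL$ under multiplication yields $(H\cap KL)(K\cap HL)\subseteq HL\cap KL$. For the reverse inclusion, take $g\in HL\cap KL$ and use $G=HK$ to write $g=hk$ with $h\in H$ and $k\in K$. Then $h=gk^{-1}$ lies in $KL$, since $g\in KL$, $k^{-1}\in K\leqslant KL$, and $KL$ is a subgroup; hence $h\in H\cap KL$. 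Symmetrically, $k=h^{-1}g\in HL$, so $k\in K\cap HL$. Therefore $g=hk\in(H\cap KL)(K\cap HL)$, and combining the two inclusions gives the claim.

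The only point requiring any care — and it is a mild one — is that $HL$ and $KL$ are genuine subgroups, which is precisely where normality of $L$ is used; the rest is a one-line coset manipulation. In particular I would deliberately avoid the tempting alternative of comparing $|HL\cap KL|=|HL|\,|KL|/|G|$ with the cardinality of the product set $(H\cap KL)(K\cap HL)$, because controlling the size of that product set appears to require exactly the structural conclusion we are after, making such an approach circular. In short, there is no serious obstacle here: the content of the lemma is the clean observation recorded above, and the proof is essentially immediate once one writes down $g=hk$.
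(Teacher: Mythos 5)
Your proof is correct and complete: the paper itself gives no argument for this lemma, citing it as Lemma~2(i) of Liebeck--Praeger--Saxl (1996), and your two-line verification (closure of the subgroup $HL\cap KL$ for one inclusion, and writing $g=hk$ with $h=gk^{-1}\in KL$, $k=h^{-1}g\in HL$ for the other) is exactly the standard proof given there. Your observation that only normality of $L$ is needed, not the almost simple hypotheses, is also accurate, as is your caution about the circularity of an order-counting approach.
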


The following lemma implies that we may consider specific representatives of a conjugacy class of subgroups when studying factorizations of a group.

\begin{lemma}\label{LemXia04}
Let $G=HK$ be a factorization. Then for all $x,y\in G$ we have $G=H^xK^y$ with $H^x\cap K^y\cong H\cap K$.
\end{lemma}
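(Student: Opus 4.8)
\textbf{Proof proposal for Lemma~\ref{LemXia04}.}
The plan is to reduce the statement to Lemma~\ref{LemXia03} applied in the almost simple group $\langle G\rangle$ generated by $G$ together with the inner automorphisms needed to realize the conjugations, but in fact a direct argument is cleaner. First I would treat the factorization identity $G=H^xK^y$. The key observation is the same substitution trick used in the proof of Lemma~\ref{LemXia03}: since $G=HK$, write $xy^{-1}=hk$ with $h\in H$ and $k\in K$, and then compute
\[
H^xK^y=x^{-1}Hx\,y^{-1}Ky=x^{-1}H(xy^{-1})Ky=x^{-1}HhkKy=x^{-1}HKy=x^{-1}Gy=G,
\]
where the last equality holds because $G$ is a group and $x,y\in G$. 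This establishes the factorization.

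For the isomorphism $H^x\cap K^y\cong H\cap K$, I would again use the element $xy^{-1}=hk$. The point is that conjugation is an automorphism of $G$, so $H^x\cap K^y=(H\cap K^{yx^{-1}})^x$, and since $x$-conjugation is an isomorphism it suffices to show $H\cap K^{yx^{-1}}\cong H\cap K$. Now $yx^{-1}=(xy^{-1})^{-1}=(hk)^{-1}=k^{-1}h^{-1}$, so $K^{yx^{-1}}=K^{k^{-1}h^{-1}}=K^{h^{-1}}$ because $k^{-1}$ normalizes... wait, $k^{-1}\in K$ so $K^{k^{-1}}=K$, hence $K^{yx^{-1}}=K^{h^{-1}}$. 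Therefore $H\cap K^{yx^{-1}}=H\cap K^{h^{-1}}=(H^{h}\cap K)^{h^{-1}}=(H\cap K)^{h^{-1}}$, using $h\in H$ so $H^{h}=H$. Thus $H\cap K^{yx^{-1}}=(H\cap K)^{h^{-1}}\cong H\cap K$, and composing with the $x$-conjugation isomorphism gives $H^x\cap K^y\cong H\cap K$, as desired.

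I do not expect any genuine obstacle here; the lemma is purely formal group theory and the substitution $xy^{-1}=hk$ does all the work, exactly as in Lemma~\ref{LemXia03}. The only mild subtlety worth writing carefully is bookkeeping with which side gets conjugated by $h$ versus $k$: one should verify that the same decomposition $xy^{-1}=hk$ simultaneously yields the factorization and pins down the intersection up to conjugacy by $h\in H$ (equivalently by $k\in K$, since $(H\cap K)^{h^{-1}}=(H\cap K)^{k}$ follows from $hk=xy^{-1}$ together with both expressions equalling $H^x\cap K^y$ up to the outer $x$-conjugation). I would present the two parts in the order above — factorization first, then intersection — and keep the computation to the three displayed lines plus one sentence identifying the conjugating element.
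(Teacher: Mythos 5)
Your argument is correct and is essentially the paper's own proof: the same decomposition $xy^{-1}=hk$ gives the factorization verbatim, and your intersection computation is the mirror image of the paper's, which instead writes $H^x\cap K^y=(H^{xy^{-1}}\cap K)^y=(H^{hk}\cap K)^y=((H\cap K)^k)^y$, i.e.\ factors out the $y$-conjugation and uses $H^h=H$ rather than factoring out the $x$-conjugation and using $K^{k^{-1}}=K$. One caution: your closing parenthetical claim that $(H\cap K)^{h^{-1}}=(H\cap K)^{k}$ does not actually follow (the two computations express $H^x\cap K^y$ as conjugates of these sets by the \emph{different} outer elements $x$ and $y$, so no equality between them can be inferred), but this remark plays no role in the proof, which stands as written.
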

  
\begin{proof}
As $xy^{-1}\in G=HK$, there exists $h\in H$ and $k\in K$ such that $xy^{-1}=hk$. Thus
\[
H^xK^y=x^{-1}Hxy^{-1}Ky=x^{-1}HhkKy=x^{-1}HKy=x^{-1}Gy=G,
\]
and
\[
H^x\cap K^y=(H^{xy^{-1}}\cap K)^y\cong H^{xy^{-1}}\cap K=H^{hk}\cap K=H^k\cap K=(H\cap K)^k\cong H\cap K.\qedhere
\]
\end{proof}

\section{Notation}

Throughout this paper, let $q=p^f$ be a power of an odd prime $p$, let $m\geqslant3$ be an integer, let $V$ be a vector space of dimension $2m+1$ over $\bbF_q$ equipped with a nondegenerate symmetric bilinear form $\beta$, let $e_1,f_1,\dots,e_m,f_m,d$ be a standard basis for $V$ as in~\cite[2.2.3]{LPS1990}, let $U=\langle e_1,\dots,e_m\rangle_{\bbF_q}$, let $U_1=\langle e_2,\dots,e_m\rangle_{\bbF_q}$, and let $W=\langle f_1,\dots,f_m\rangle_{\bbF_q}$. 
From~\cite[3.7.4]{Wilson2009} we see that $\Omega(V)_U=\Pa_m[\Omega(V)]$ has a subgroup $R{:}T$, where 
\[
R=q^{m(m-1)/2}.q^m 
\]
is the kernel of $\Omega(V)_U$ acting on $U$, and 
\[
T=\SL_m(q)
\] 
stabilizes both $U$ and $W$ (the action of $T$ on $U$ determines that on $W$ in the way described in~\cite[Lemma~2.2.17]{BG2016}).

Denote by $\bbF_q^\square$ the subgroup of index $2$ in $\bbF_q^\times$, denote $\bbF_q^\boxtimes=\bbF_q^\times\setminus\bbF_q^\square$, let $\lambda\in\bbF_q^\boxtimes$, let $v=e_1+\lambda f_1$, and let $\perp$ denote the perpendicularity with respect to $\beta$. 
Then $v^\perp$ is a $(2m)$-dimensional orthogonal space of minus type.
For a vector $u\in V$, denote by $r_u$ the reflection in $u$.
We follow the convention to denote $\SL_n^+(q)=\SL_n(q)$ and $\SL_n^-(q)=\SU_n(q)$.

\section{Infinite families of $(X,Y)$ in Table~\ref{TabOmega}}\label{SecOmega01}

In this section we construct the infinite families of factor pairs $(X,Y)$ in Table~\ref{TabOmega}.

\begin{lemma}\label{LemOmega01}
Let $G=\Omega(V)=\Omega_{2m+1}(q)$, let $M=R{:}T$, and let $K=G_v$. Then the following statements hold:
\begin{enumerate}[{\rm (a)}]
\item the induced group by the action of $M\cap K$ on $U$ is $\SL(U)_{U_1,e_1+U_1}=q^{m-1}{:}\SL_{m-1}(q)$;
\item the kernel of $M\cap K$ acting on $U$ is the special $p$-group $R\cap K=q^{(m-1)(m-2)/2}.q^{m-1}$;
\item if $H=R{:}S$ with $S\leqslant T$, then $H\cap K=(R\cap K).S_{U_1,e_1+U_1}$.
\end{enumerate}
\end{lemma}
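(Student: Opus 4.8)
The plan is to work entirely inside the $m$-dimensional space $U$ and its stabilizer. Write $G=\Omega(V)$, $M=R{:}T$ with $R$ the kernel of $G_U$ on $U$ and $T=\SL_m(q)$ acting on $U=\langle e_1,\dots,e_m\rangle$, and $K=G_v$ where $v=e_1+\lambda f_1$. First I would record what $K$ does to $U$: since $v\in\langle e_1,f_1\rangle$ and $\beta(v,e_i)=\beta(v,f_j)=\beta(v,d)=0$ for $i,j\ge 2$, the element of $U$ lying in $v^\perp$ is exactly $U_1=\langle e_2,\dots,e_m\rangle$, while $e_1\notin v^\perp$. So any element of $G_v$ that stabilizes $U$ must stabilize $U_1=U\cap v^\perp$; and because $\beta(v,e_1)=1$ is fixed and $v$ is fixed, $e_1$ is fixed modulo $U_1$, i.e. the image of $M\cap K$ in $\GL(U)$ lies in $\SL(U)_{U_1,\,e_1+U_1}$, the stabilizer of the flag $0<\langle e_1+U_1\rangle$ in $U/U_1$ lifted appropriately — this group is the affine group $q^{m-1}{:}\SL_{m-1}(q)$ (matrices fixing $U_1$ and acting as $1$ on $e_1$ modulo $U_1$, with determinant forcing the bottom-right block into $\SL_{m-1}(q)$).

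For the reverse inclusion in (a), I would exhibit enough of $M\cap K$ to realise all of $\SL(U)_{U_1,e_1+U_1}$ on $U$. The $\SL_{m-1}(q)$ on $U_1$ is realised by the subgroup of $T=\SL_m(q)$ fixing $e_1$ and $f_1$ and acting on $\langle e_2,\dots,e_m\rangle$ (hence on $\langle f_2,\dots,f_m\rangle$ in the dual way), which clearly centralizes $v$; the unipotent radical $q^{m-1}$ (transvections sending $e_1\mapsto e_1+u$, $u\in U_1$, and dually adjusting the $f_i$) is realised by elements of $T$, but these do not fix $v$ on the nose — they must be corrected by an element of $R$, which acts trivially on $U$, so that the product lies in $M\cap K=R{:}(T\cap\text{something})$ adjusted by $R$. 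This is the one place that needs a short explicit computation: given $t\in T$ with $t\colon e_1\mapsto e_1+u$, one checks that $t$ moves $v$ within $v+\langle U,d\rangle^{?}$ and that the discrepancy lies in the part of the parabolic that $R$ surjects onto, so some $rt\in M\cap K$ induces the same map on $U$ as $t$. Hence the induced group is all of $\SL(U)_{U_1,e_1+U_1}$, proving (a).

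Part (b): the kernel of $M\cap K$ on $U$ is $(M\cap K)\cap R=R\cap K$ since $R$ is precisely the kernel of $M$ on $U$. To identify $R\cap K$, recall (from \cite[3.7.4]{Wilson2009}, the structure of $\Pa_m[\Omega_{2m+1}(q)]$) that $R=q^{m(m-1)/2}.q^m$ with the top $q^m$ the natural module $U$ for $T$ and the bottom $\wedge^2 U$ (or its symmetric-form analogue) of dimension $\binom{m}{2}$; an element of $R$ fixes every vector of $U$ and acts on $W$ and $d$ by formulas linear in a skew bilinear datum on $U$ plus a linear datum. Intersecting with $G_v$ imposes that the datum annihilate the line spanned by (the $U$-component of) $v$, i.e. restricts the $q^m$ to the subspace vanishing on $\langle e_1\rangle$ — giving $q^{m-1}$ — and restricts the $\binom{m}{2}$ skew part to forms whose contraction against $e_1$ vanishes — giving $\binom{m-1}{2}$. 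So $R\cap K=q^{(m-1)(m-2)/2}.q^{m-1}$, and it is special of the same shape as $R$ (a $p$-group with centre equal to commutator equal to Frattini equal to $q^{(m-1)(m-2)/2}$), exactly the claim. Part (c) is then immediate: if $H=R{:}S$ with $S\le T$, then an element $h=rs$ of $H$ lies in $K$ iff it fixes $v$; projecting to $U$, $s$ must induce an element of $\SL(U)_{U_1,e_1+U_1}$, i.e. $s\in S_{U_1,e_1+U_1}$, and conversely for each such $s$ the argument of (a) produces $r\in R$ with $rs\in K$ (and the set of such $r$ for fixed $s$ is a coset of $R\cap K$); hence $H\cap K=(R\cap K).S_{U_1,e_1+U_1}$, the extension being the evident one.

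The main obstacle is the bookkeeping in part (a)'s reverse inclusion and in pinning down $R\cap K$ in part (b): one has to use the explicit module structure of $R$ as a section of the orthogonal-group parabolic (the $\wedge^2U$-by-$U$ description) and track exactly how conjugating $v=e_1+\lambda f_1$ by radical and Levi elements moves it, to see that the ``correction by $R$'' is always available and that the linear conditions cut $R$ down to the stated special $p$-group. Everything else — the flag-stabilizer identification, the affine-group orders, and the deduction of (c) from (a) and (b) — is routine once that explicit description is in hand.
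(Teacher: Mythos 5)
Your argument is correct in outline, but it takes a genuinely different route from the paper at the two decisive points, and the one step you leave flagged, plus one unproved assertion, should be filled in. For the reverse inclusion in (a) (and for the converse direction of (c)) you construct elements explicitly; the paper instead counts: it bounds $|M\cap K|\geqslant|M||K|/|G|$ using $K=G_v=\Omega_{2m}^-(q)$, observes that this lower bound equals $|R\cap K|\cdot|\SL(U)_{U_1,e_1+U_1}|$, and concludes that the containment $(M\cap K)^U\leqslant\SL(U)_{U_1,e_1+U_1}$ is an equality; part (c) then follows from $M_{U_1,e_1+U_1}=(M\cap K)R$ by a Dedekind-type computation, with no element-by-element correction. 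Your flagged correction step does go through, and in fact works uniformly for every $s\in T_{U_1,e_1+U_1}$: the dual action on $W$ forces $f_1^s=f_1$, so $v^s=v+u$ with $u=\sum_{j\geqslant2}c_je_j\in U_1$, and the isometry fixing $U\cup\{d\}$ pointwise with $f_1\mapsto f_1-\lambda^{-1}u$ and $f_j\mapsto f_j+\lambda^{-1}c_je_1$ for $j\geqslant2$ is a unipotent element of $R$ that restores $v$; this single computation yields both the reverse inclusion in (a) and the surjectivity onto $S_{U_1,e_1+U_1}$ needed in (c). Your parametrisation of $R$ and the count $|R\cap K|=q^{(m-1)(m-2)/2}\cdot q^{m-1}$ (the conditions are exactly that the linear datum annihilates $e_1$ and the first row of the skew datum vanishes) are correct, but (b) also asserts that $R\cap K$ is \emph{special}, which you state without proof: either verify the commutator structure in your coordinates, or use the paper's shortcut --- any $r\in R\cap K$ fixes $e_1$ and $v$, hence fixes $\langle e_1,f_1\rangle_{\bbF_q}$ pointwise, so $R\cap K$ is identified with the analogous radical inside $\Omega(\langle e_2,f_2,\dots,e_m,f_m,d\rangle_{\bbF_q})\cong\Omega_{2m-1}(q)$, where the shape and specialness are quoted from \cite[3.7.4]{Wilson2009}. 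With those two points written out your proof is complete; it is more explicit and self-contained than the paper's order-counting argument (which needs the order of $\Omega_{2m}^-(q)$ and Wilson's structural result), at the cost of the matrix bookkeeping that the counting argument avoids.
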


\begin{proof} 
We first calculate $R\cap K$, the kernel of $M\cap K$ acting on $U$. For each $r\in R\cap K$, since $r$ fixes $e_1$ and $v$, we deduce that $r$ fixes $\langle e_1,v\rangle_{\bbF_{q^2}}=\langle e_1,f_1\rangle_{\bbF_q}$ pointwise. 
Hence $R\cap K$ is isomorphic to the pointwise stabilizer of $U_1$ in $\Omega(\langle e_2,f_2,\dots,e_m,f_m,d\rangle_{\bbF_q})$.
Then it is shown in~\cite[3.7.4]{Wilson2009} that $R\cap K=q^{(m-1)(m-2)/2}.q^{m-1}$ is special.

Since $K$ fixes $v$, it stabilizes $v^\perp$. 
Hence $M\cap K$ stabilizes $U\cap v^\perp=\langle e_2,\dots,e_m\rangle_{\bbF_q}=U_1$.
For arbitrary $h\in M\cap K$, write $e_1^h=\mu e_1+e$ with $e\in U_1$. Then
\[
(\lambda f_1)^h=(v-e_1)^h=v^h-e_1^h=v-(\mu e_1+e)=(1-\mu)e_1-e+\lambda f_1,
\] 
and so $\beta(e_1^h,(\lambda f_1)^h)=\beta(\mu e_1,\lambda f_1)$. Since $\lambda\neq0$ and $h$ preserves $\beta$, we obtain $\mu=1$.
It follows that $M\cap K$ stabilizes $e_1+U_1$, and so the induced group of $M\cap K$ on $U$ is contained in $\SL(U)_{U_1,e_1+U_1}$, that is, $(M\cap K)^U\leqslant\SL(U)_{U_1,e_1+U_1}=q^{m-1}{:}\SL_{m-1}(q)$. Now 
\[
M\cap K=(R\cap K).(M\cap K)^U\leqslant(R\cap K).\SL(U)_{U_1,e_1+U_1},
\] 
while
\begin{align*}
|M\cap K|\geqslant\frac{|M||K|}{|G|}&=\frac{|(q^{m(m-1)/2}.q^m){:}\SL_m(q)||\Omega_{2m}^-(q)|}{|\Omega_{2m+1}(q)|}\\
&=q^{(m+2)(m-1)/2}|\SL_{m-1}(q)|=|R\cap K||\SL(U)_{U_1,e_1+U_1}|.
\end{align*}
Thus we obtain $(M\cap K)^U=\SL(U)_{U_1,e_1+U_1}=q^{m-1}{:}\SL_{m-1}(q)$.

Finally, let $H=R{:}S$ with $S\leqslant T$. Since $(M\cap K)^U=\SL(U)_{U_1,e_1+U_1}=(M_{U_1,e_1+U_1})^U$, we obtain $M_{U_1,e_1+U_1}=(M\cap K)R$ as $R$ is the kernel of $M$ acting on $U$. This implies that $H_{U_1,e_1+U_1}=(H\cap K)R$, and so
\[
(H\cap K)/(R\cap K)\cong(H\cap K)R/R=H_{U_1,e_1+U_1}/R=S_{U_1,e_1+U_1}R/R\cong S_{U_1,e_1+U_1}.
\]
Therefore, $H\cap K=(R\cap K).S_{U_1,e_1+U_1}$.
\end{proof}

The following lemma gives the factor pairs $(X,Y)$ in Row~1 of Table~\ref{TabOmega}.

\begin{lemma}\label{LemOmega02}
Let $Z=\Omega(V)=\Omega_{2m+1}(q)$, let $X=R{:}S$ with $S\leqslant T$ such that $S=\SL_a(q^b)$ or $\Sp_a(q^b)$, where $m=ab$, and let $Y=Z_v$. Then 
\[
X\cap Y=
\begin{cases}
(q^{(m-1)(m-2)/2}.q^{m-1}.q^{m-b}){:}\SL_{a-1}(q^b)&\textup{if }S=\SL_a(q^b)\\
(q^{(m-1)(m-2)/2}.q^{m-1}.[q^{m-b}]){:}\Sp_{a-2}(q^b)&\textup{if }S=\Sp_a(q^b),
\end{cases}
\]
and $Z=XY$ with $X=(q^{m(m-1)/2}.q^m){:}S$ and $Y=\Omega_{2m}^-(q)$.
\end{lemma}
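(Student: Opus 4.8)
The plan is to apply Lemma~\ref{LemOmega01}(c) with the present $X=R{:}S$ in place of $H$ and the present $Y=Z_v$ in place of $K$, which immediately gives $X\cap Y=(R\cap K).S_{U_1,e_1+U_1}$ with $R\cap K=q^{(m-1)(m-2)/2}.q^{m-1}$ special; the whole task then reduces to identifying the stabilizer $S_{U_1,e_1+U_1}$ for $S=\SL_a(q^b)$ and for $S=\Sp_a(q^b)$. First I would recall how $S$ sits inside $T=\SL_m(q)=\SL(U)$: the field-extension subgroup $\SL_a(q^b)$ acts on $U=\bbF_{q^b}^a$ viewed as an $\bbF_q$-space, and $\Sp_a(q^b)$ is the subgroup preserving a symplectic form on that $\bbF_{q^b}^a$. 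The hyperplane $U_1$ of $U$ has codimension $1$ over $\bbF_q$, so its stabilizer in $S$ is the stabilizer of the corresponding object in the natural $S$-module; here the key point is to see $U_1$ as (essentially) a hyperplane-type configuration relative to the $\bbF_{q^b}$-structure — concretely, $U_1=\langle e_2,\dots,e_m\rangle_{\bbF_q}$ together with $e_1+U_1$ pins down a point and an affine hyperplane, and one computes the induced stabilizer in each case.

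For the linear case $S=\SL_a(q^b)$: the stabilizer in $\SL_a(q^b)$ of a $1$-space of $U$ over $\bbF_{q^b}$ together with a complementary affine condition is a maximal parabolic of shape $q^{b(a-1)}{:}\GL_{a-1}(q^b)$ intersected with the appropriate $\SL$ and the affine-hyperplane stabilizer, which works out to $q^{m-b}{:}\SL_{a-1}(q^b)$ (using $b(a-1)=m-b$). Stacking this on top of $R\cap K$ yields $X\cap Y=(q^{(m-1)(m-2)/2}.q^{m-1}.q^{m-b}){:}\SL_{a-1}(q^b)$, as claimed. For the symplectic case $S=\Sp_a(q^b)$: the stabilizer of a nonzero vector (a point of the natural symplectic $\bbF_{q^b}$-module) is $q^{b(a-1)}{:}\Sp_{a-2}(q^b)$ — the $q^{b(a-1)}$ being the radical, with a further affine-hyperplane refinement — and one must be careful that the affine condition $e_1+U_1$ does not fully cut the top to a clean $q^{m-b}$, which is why the statement writes $[q^{m-b}]$ (an unspecified group of that order). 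Hence $X\cap Y=(q^{(m-1)(m-2)/2}.q^{m-1}.[q^{m-b}]){:}\Sp_{a-2}(q^b)$.

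It remains to prove $Z=XY$. By Lemma~\ref{LemXia01} applied with the trivial normal subgroup, or more directly, it suffices to verify $|X||Y|=|Z||X\cap Y|$, i.e.\ an order count. Here $Y=\Omega_{2m}^-(q)$ is the stabilizer of the anisotropic vector $v$ (its orthogonal complement $v^\perp$ being a minus-type $2m$-space), and $X=(q^{m(m-1)/2}.q^m){:}S=R{:}S$. With the explicit order of $X\cap Y$ from the previous paragraph, the identity $|X||Y|=|Z||X\cap Y|$ becomes a routine check: dividing through by $|R|=q^{m(m-1)/2+m}$ and $|R\cap K|$ reduces it to $|S||\Omega_{2m}^-(q)|/|\Omega_{2m+1}(q)|=q^{\,\bullet}|S_{U_1,e_1+U_1}|$ with the exponent $\bullet$ read off from $|\Omega_{2m+1}(q)|/(|R||\Omega_{2m}^-(q)|)$ — exactly the same bookkeeping already carried out in the proof of Lemma~\ref{LemOmega01}, now with $\SL_{m-1}(q)$ replaced by $S_{U_1,e_1+U_1}$. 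That $|X\cap Y|$ is at least $|X||Y|/|Z|$ always holds, so the order computation forces equality and hence $Z=XY$.

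The main obstacle is the precise determination of $S_{U_1,e_1+U_1}$ in the symplectic case: one has to chase through the $\bbF_{q}$-versus-$\bbF_{q^b}$ structure to see which vectors of $U$ the group $\Sp_a(q^b)$ can move where, handle the affine (rather than linear) nature of the hyperplane $e_1+U_1$, and decide exactly how much of the unipotent radical survives — this is where the bracketed $[q^{m-b}]$ comes from, and pinning down even its order (as opposed to its isomorphism type) requires some care. The linear case and the final order count are comparatively mechanical once the setup of Lemma~\ref{LemOmega01} is in hand.
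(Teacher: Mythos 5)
Your proposal is correct and follows essentially the same route as the paper: Lemma~\ref{LemOmega01}(c) gives $X\cap Y=(R\cap K).S_{U_1,e_1+U_1}$, the structure of $S_{U_1,e_1+U_1}$ is then identified (the paper does this by simply citing \cite[Lemmas~4.1, 4.2 and~4.4]{LWX-Linear}, which contain the vector-stabilizer computation you sketch), and $Z=XY$ follows from the order count $|X|/|X\cap Y|=q^m(q^m-1)=|Z|/|Y|$. One small correction to your symplectic discussion: stabilizing the pair $(U_1,e_1+U_1)$ amounts (via the trace form and the symplectic form) to stabilizing a nonzero vector of the natural $\bbF_{q^b}$-module, so $S_{U_1,e_1+U_1}$ is the full vector stabilizer, and the bracket in $[q^{m-b}]$ records that this unipotent radical is a nonabelian special group of order exactly $q^{m-b}$, not that the affine condition cuts the radical down further.
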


\begin{proof} 
It is clear that $X=(q^{m(m-1)/2}.q^m){:}S$ and $Y=\Omega_{2m}^-(q)$.
From Lemma~\ref{LemOmega01} we obtain $X\cap Y=(q^{(m-1)(m-2)/2}.q^{m-1}).S_{U_1,e_1+U_1}$. 
Moreover, by~\cite[Lemmas~4.1,~4.2 and~4.4]{LWX-Linear} we have
\[
S_{U_1,e_1+U_1}=
\begin{cases}
q^{m-b}{:}\SL_{a-1}(q^b)&\textup{if }S=\SL_a(q^b)\\
[q^{m-b}]{:}\Sp_{a-2}(q^b)&\textup{if }S=\Sp_a(q^b).
\end{cases}
\]
This proves the conclusion of this lemma on $X\cap Y$, and implies that
\[
\frac{|X|}{|X\cap Y|}=\frac{|(q^{m(m-1)/2}.q^m){:}S|}{|(q^{(m-1)(m-2)/2}.q^{m-1}).S_{U_1,e_1+U_1}|}=q^m(q^m-1)=\frac{|\Omega_{2m+1}(q)|}{|\Omega_{2m}^-(q)|}=\frac{|Z|}{|Y|}.
\]
Therefore, $Z=XY$.
\end{proof}

The factor pairs $(X,Y)$ in Row~2 of Table~\ref{TabOmega} will be constructed in Lemmas~\ref{LemOmega03}--\ref{LemOmega06} and~\ref{LemOmega11}.

\begin{lemma}\label{LemOmega03}
Let $Z=\Omega_7(q)$ with $m=3$, let $X=\N_1^\varepsilon[Z]^{(\infty)}=\Omega_6^\varepsilon(q)$ with $\varepsilon\in\{+,-\}$, and let $Y=\G_2(q)<Z$. Then $Z=XY$ with $X\cap Y=\SL_3^\varepsilon(q)$.
\end{lemma}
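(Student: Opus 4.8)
The plan is to establish the factorization $Z=\Omega_7(q)=\Omega_6^\varepsilon(q)\,\G_2(q)$ by verifying it is a known maximal factorization and then pinning down the intersection $X\cap Y$ by an order count. First I would recall from~\cite{LPS1990} (the maximal factorizations of $\Omega_7(q)$, using the triality-related embedding $\G_2(q)<\Omega_7(q)$) that $\Omega_7(q)=\N_1^-[\Omega_7(q)]^{(\infty)}\,\G_2(q)$, i.e.\ the minus-type factorization with $\varepsilon=-$ is genuinely on the list; the plus-type case $\varepsilon=+$ then needs either a separate citation to the same table or a short argument via a graph automorphism / the action of $\G_2(q)$ on the two orbits of nonsingular points. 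So the first step is: cite the classification of maximal factorizations of $\Omega_7(q)$ to get $Z=XY$ with $X=\Omega_6^\varepsilon(q)$ and $Y=\G_2(q)$.

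Next I would compute the order of $X\cap Y$. Since $Z=XY$ gives $|X\cap Y|=|X||Y|/|Z|$, I would plug in $|\Omega_6^\varepsilon(q)|$, $|\G_2(q)|=q^6(q^6-1)(q^2-1)$, and $|\Omega_7(q)|=\tfrac12 q^9\prod_{i=1}^{3}(q^{2i}-1)$, and simplify. For $\varepsilon=-$ one has $|\Omega_6^-(q)|=\tfrac12 q^6(q^4-1)(q^3+1)(q-1)$ (up to the usual scalar factor), and the arithmetic should collapse to $|\SL_3^-(q)|=|\SU_3(q)|=q^3(q^3+1)(q^2-1)$; for $\varepsilon=+$, with $|\Omega_6^+(q)|=\tfrac12 q^6(q^4-1)(q^3-1)(q-1)$ one should land on $|\SL_3(q)|=q^3(q^3-1)(q^2-1)$. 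So the order of $X\cap Y$ equals $|\SL_3^\varepsilon(q)|$ in both cases.

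Then I would identify $X\cap Y$ as a group, not merely compute its order. The natural approach: $\G_2(q)$ acts on the $7$-dimensional space preserving a nonsingular point stabilizer structure; the stabilizer in $\G_2(q)$ of a nonsingular $1$-space is $\SL_3^{\pm}(q)$ according to the type of the point (this is the standard maximal-subgroup structure of $\G_2(q)$, with $\SL_3(q).2$ and $\SU_3(q).2$ as the two relevant geometric maximal subgroups). Thus $X\cap Y = \Omega_6^\varepsilon(q)\cap \G_2(q)$ is contained in such a point stabilizer in $\G_2(q)$, hence in $\SL_3^\varepsilon(q).2$ or smaller; combined with the order computation $|X\cap Y|=|\SL_3^\varepsilon(q)|$ from the previous step, this forces $X\cap Y=\SL_3^\varepsilon(q)$. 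I would phrase this carefully so that $X=\N_1^\varepsilon[Z]^{(\infty)}$ really is the point stabilizer's derived part corresponding to the same point that $\G_2(q)$ sees.

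The main obstacle I expect is the precise bookkeeping at the $\varepsilon=+$ end and the matching of the two ``signs''. The classification of maximal factorizations is stated primarily for one sign, and one must make sure that the embedding $\G_2(q)<\Omega_7(q)$ together with the chosen nonsingular point $v$ (here $v=e_1+\lambda f_1$ with $\lambda$ a nonsquare, so $v^\perp$ is of minus type) produces the intersection of the asserted isomorphism type: i.e.\ that a minus-type nonsingular point of $\Omega_7(q)$ corresponds under the $\G_2$-action to the point class whose stabilizer is $\SU_3(q).2$, and a plus-type point to the $\SL_3(q).2$ class. Getting this correspondence right — rather than swapped — is the delicate point; I would resolve it by a direct check on the form, e.g.\ comparing the discriminant of $v^\perp$ with the type of the invariant cubic/hexic structure $\G_2(q)$ carries on $V$, or simply by invoking the detailed description of $\G_2(q)\cap \N_1^\varepsilon[\Omega_7(q)]$ already in the literature~\cite{LPS1990, Kleidman1987}. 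Everything else is routine order arithmetic that I would not grind through in the paper.
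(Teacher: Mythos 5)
Your overall strategy (get the factorization first, then pin down the intersection by counting) runs in the opposite direction to the paper, but as written it has a genuine gap at the very first step, and the later steps inherit it. What \cite{LPS1990} gives you is the \emph{maximal} factorization $Z=\N_1^\varepsilon[Z]\,\G_2(q)$, where $\N_1^\varepsilon[Z]\cong\Omega_6^\varepsilon(q).2$ is the full stabilizer of the nonsingular $1$-space; the factorization $Z=\Omega_6^\varepsilon(q)\,\G_2(q)$ with $X=\N_1^\varepsilon[Z]^{(\infty)}$ is strictly stronger and is not on that list. The difference is precisely the content of the lemma: from the maximal factorization the order count only yields $|\G_2(q)\cap\N_1^\varepsilon[Z]|=2|\SL_3^\varepsilon(q)|$ (indeed $\G_2(q)_{\langle v\rangle}=\SL_3^\varepsilon(q).2$), and $Z=X\,\G_2(q)$ holds if and only if this $\SL_3^\varepsilon(q).2$ meets $Z_v=\Omega_6^\varepsilon(q)$ in $\SL_3^\varepsilon(q)$ rather than lying entirely inside it --- equivalently, if and only if $\G_2(q)$ is transitive on nonsingular \emph{vectors} of fixed norm and not merely on the corresponding $1$-spaces. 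So your step~2 ($|X\cap Y|=|X||Y|/|Z|$) presupposes exactly the refined factorization you have not yet established, and step~3 rests on step~2; the argument is circular. The delicate point you do flag (matching $\varepsilon$ with $\SL_3$ versus $\SU_3$) is real but secondary; the group-versus-$1$-space (index $2$) issue is the one carrying the content. (Minor: the orders you quote for $\Omega_6^\pm(q)$ have $(q-1)$ where $(q^2-1)$ should stand, e.g.\ $|\Omega_6^+(q)|=\tfrac12q^6(q^2-1)(q^3-1)(q^4-1)$; with your values the arithmetic would not collapse to $|\SL_3^\varepsilon(q)|$.)

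The paper avoids all of this by quoting the explicit description of the embedding $\G_2(q)<\Omega_7(q)$ in \cite[4.3.6]{Wilson2009}, which gives $X\cap Y=\SL_3^\varepsilon(q)$ outright, and then the single index identity $|Z|/|X|=q^3(q^3+\varepsilon1)=|Y|/|X\cap Y|$ forces $Z=XY$; no appeal to \cite{LPS1990} is needed. Your plan can be repaired in two ways: either cite such an explicit description of $\Omega_6^\varepsilon(q)\cap\G_2(q)$ (at which point the LPS1990 citation and the order count become redundant and you have recovered the paper's proof), or keep your route but add the missing ingredient, namely that the outer involution $t$ of $\G_2(q)_{\langle v\rangle}=\SL_3^\varepsilon(q).2$ does not fix $v$. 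For instance, $t$ interchanges the two $\SL_3^\varepsilon(q)$-invariant $3$-dimensional constituents of $v^\perp$ (for $\varepsilon=-$ it induces the Frobenius on the unitary structure), hence has determinant $-1$ on $v^\perp$; since $\det_V(t)=1$, it must negate $v$, so $\G_2(q)_v=\SL_3^\varepsilon(q)$, giving transitivity on vectors and hence the refined factorization, after which your counting and identification of $X\cap Y$ go through.
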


\begin{proof}
From~\cite[4.3.6]{Wilson2009} we see that $X\cap Y=\SL_3^\varepsilon(q)$. Hence
\[
\frac{|Z|}{|X|}=\frac{|\Omega_7(q)|}{|\Omega_6^\varepsilon(q)|}=q^3(q^3+\varepsilon 1)=\frac{|\G_2(q)|}{|\SL_3^\varepsilon(q)|}=\frac{|Y|}{|X\cap Y|},
\]
and so $Z=XY$.
\end{proof}

\begin{lemma}\label{LemOmega04}
Let $Z=\Omega_7(q)$ with $m=3$, let $M=\N_1^\varepsilon[Z]^{(\infty)}=\Omega_6^\varepsilon(q)$ with $\varepsilon\in\{+,-\}$, let $X=\Omega_5(q)<M$, and let $Y=\G_2(q)<Z$. Then $Z=XY$ with $X\cap Y=\SL_2(q)$.
\end{lemma}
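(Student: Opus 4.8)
The plan is to reduce to the factorization of $\Omega_7(q)$ already established in Lemma~\ref{LemOmega03} and then descend one step. Since $X=\Omega_5(q)<M=\N_1^\varepsilon[Z]^{(\infty)}=\Omega_6^\varepsilon(q)$, and Lemma~\ref{LemOmega03} gives $Z=MY$ with $M\cap Y=\SL_3^\varepsilon(q)$ (after replacing $Y$ by a suitable conjugate, which is harmless by Lemma~\ref{LemXia04}), we have
\[
X\cap Y=X\cap M\cap Y=X\cap\SL_3^\varepsilon(q),
\]
the intersection being taken inside $M$. So it suffices to identify this intersection as $\SL_2(q)$ and then verify the index equality $|X|/|X\cap Y|=|Z|/|Y|$; this forces $|XY|=|X||Y|/|X\cap Y|=|Z|$ and hence $Z=XY$. (Equivalently, one could prove the factorization $M=X(M\cap Y)$, i.e.\ $\Omega_6^\varepsilon(q)=\Omega_5(q)\cdot\SL_3^\varepsilon(q)$, and combine it with $Z=MY$; but the orders are cleaner to handle inside the centreless group $\Omega_7(q)$, so I would work there.)

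For the identification I would describe the action of $M\cap Y=\SL_3^\varepsilon(q)$ on the natural $6$-dimensional $\bbF_q$-module $W$ of $M=\Omega(W)$ using~\cite[4.3.6]{Wilson2009}: for $\varepsilon=+$ the space $W$ is the sum of the natural $\SL_3(q)$-module $U$ and its dual $U^*$, with the hyperbolic quadratic form $u+\phi\mapsto\phi(u)$; for $\varepsilon=-$ it is the natural $\bbF_{q^2}$-module of $\SU_3(q)$ regarded over $\bbF_q$, with form the $\bbF_{q^2}/\bbF_q$-trace of the Hermitian form. The subgroup $X=\N_1^\varepsilon[M]^{(\infty)}$ fixes a nonsingular vector $w$ of $W$, and by Lemma~\ref{LemXia04} we may take $w$ in general position with respect to this $\SL_3^\varepsilon(q)$-structure. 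A direct computation then gives the stabilizer: for $\varepsilon=+$, with $w=u+\phi$ and $\phi(u)\neq0$, an element of $\SL_3(q)$ fixing $w$ fixes $u$ and preserves the hyperplane $\ker\phi$; since $U=\langle u\rangle_{\bbF_q}\oplus\ker\phi$, the determinant condition leaves exactly the group $\SL_2(q)$ acting on $\ker\phi$. For $\varepsilon=-$, with $\langle w\rangle_{\bbF_{q^2}}$ a nondegenerate Hermitian line, an element of $\SU_3(q)$ fixing $w$ acts trivially on $\langle w\rangle_{\bbF_{q^2}}$ and, again by the determinant condition, as $\SU_2(q)\cong\SL_2(q)$ on $\langle w\rangle_{\bbF_{q^2}}^\perp$. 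In both cases the resulting $\SL_2(q)$ is contained in $\Omega_5(q)$, so $X\cap Y=\SL_2(q)$.

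Finally I would do the order count: with $|\Omega_5(q)|=\tfrac12 q^4(q^2-1)(q^4-1)$, $|\SL_2(q)|=q(q^2-1)$, $|\Omega_7(q)|=\tfrac12 q^9(q^2-1)(q^4-1)(q^6-1)$ and $|\G_2(q)|=q^6(q^6-1)(q^2-1)$, one obtains
\[
\frac{|X|}{|X\cap Y|}=\frac{|\Omega_5(q)|}{|\SL_2(q)|}=\tfrac12 q^3(q^4-1)=\frac{|\Omega_7(q)|}{|\G_2(q)|}=\frac{|Z|}{|Y|},
\]
so that $|XY|=|X||Y|/|X\cap Y|=|Z|$, whence $Z=XY$. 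The main obstacle is the middle step: pinning down precisely the action of $\SL_3^\varepsilon(q)$ on $W$ and exactly which vector $X$ fixes, so that the intersection comes out as $\SL_2(q)$ and not some overgroup of it of small index — in particular, for $\varepsilon=-$, checking that it is the vector $w$ (not merely the $\bbF_q$-line $\langle w\rangle_{\bbF_q}$) that is stabilized by $X$, and that a nonsingular $w$ giving a point stabilizer with derived subgroup $\Omega_5(q)$ can be chosen in general position. Everything else is bookkeeping with group orders.
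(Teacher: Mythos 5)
Your proof is correct and follows the same overall skeleton as the paper's: reduce via Lemma~\ref{LemOmega03} to computing $X\cap Y=X\cap(M\cap Y)=X\cap\SL_3^\varepsilon(q)$ inside $M$, identify this intersection as $\SL_2(q)$, and finish with the order count (your ratio $|X|/|X\cap Y|=|Z|/|Y|$ and the paper's $|Y|/|X\cap Y|=|Z|/|X|$ are of course equivalent). The one place you genuinely diverge is the middle step. The paper identifies $X\cap\SL_3^\varepsilon(q)$ by invoking the exceptional isomorphisms $\Omega_6^\varepsilon(q)\cong\SL_4^\varepsilon(q)/\{\pm1\}$ and $\Omega_5(q)\cong\Sp_4(q)/\{\pm1\}$ and then citing the corresponding intersection computations in the companion papers \cite{LWX-Linear} and \cite{LWX-Unitary}; you instead compute the stabilizer directly in the natural $6$-dimensional orthogonal module, using the $U\oplus U^*$ description for $\varepsilon=+$ and the restriction of scalars of the Hermitian $3$-space for $\varepsilon=-$. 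Your route is self-contained and slightly more transparent about where the $\SL_2(q)$ comes from; the paper's route is shorter and keeps the low-dimensional isomorphisms doing the work uniformly across the series of papers. One small simplification available to you: the ``general position'' worry you flag is vacuous, since for $\varepsilon=+$ a vector $u+\phi$ is nonsingular precisely when $\phi(u)\neq0$, and for $\varepsilon=-$ a vector is nonsingular for the trace form precisely when its Hermitian norm is nonzero, so every nonsingular vector already has stabilizer $\SL_2(q)$ in $\SL_3^\varepsilon(q)$ and no conjugation argument is needed there.
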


\begin{proof}
Since $\Omega_6^\varepsilon(q)=\SL_4^\varepsilon(q)/\{\pm1\}$ and $\Omega_5(q)=\Sp_4(q)/\{\pm1\}$, by Lemma~\ref{LemOmega03},~\cite[Lemma~4.5]{LWX-Linear} and~\cite[Lemma~4.7]{LWX-Unitary} we have
\[
X\cap Y=X\cap(M\cap Y)=X\cap\SL_3^\varepsilon(q)=\SL_2(q).
\]
It follows that
\[
\frac{|Y|}{|X\cap Y|}=\frac{|\G_2(q)|}{|\SL_2(q)|}=q^5(q^6-1)=\frac{|\Omega_7(q)|}{|\Omega_5(q)|}=\frac{|Z|}{|X|},
\]
and thus $Z=XY$.
\end{proof}

\begin{lemma}\label{LemOmega05}
Let $Z=\Omega_7(q)$ with $m=3$, let $X=\Pa_1[Z]^{(\infty)}=q^5{:}\Omega_5(q)$, and let $Y=\G_2(q)<Z$. Then $Z=XY$ with $X\cap Y=[q^5]{:}\SL_2(q)$.
\end{lemma}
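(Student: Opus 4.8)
The plan is to mimic the structure of the proof of Lemma~\ref{LemOmega04} but working inside the parabolic subgroup $\Pa_1[Z]$ rather than inside $\N_1^\varepsilon[Z]$. Write $M=\Pa_1[Z]$, so that $X=M^{(\infty)}=q^5{:}\Omega_5(q)$ is the derived-type part of the stabilizer of a nonsingular $1$-space (here the $1$-space is $\langle d\rangle$ or a suitable singular point — I would fix the geometry so that $X$ is the stabilizer in $\Omega_7(q)$ of a singular point, with unipotent radical $q^5$ and Levi-type factor $\Omega_5(q)$). The first step is to identify $X\cap Y$ precisely as $[q^5]{:}\SL_2(q)$, and the key input should be the known intersection data for $\G_2(q)$ acting on the natural $7$-dimensional module together with the maximal factorization information from~\cite{LPS1990}.

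The main computational step I would carry out: determine the orbit of $Y=\G_2(q)$ on the relevant singular points of $V$ and the corresponding point stabilizer in $\G_2(q)$. The group $\G_2(q)$ acts on the $6$-dimensional nonsingular subspaces (the $\Omega_6^\varepsilon(q)$-spaces) with the intersections $\SL_3^\varepsilon(q)$ recorded in Lemma~\ref{LemOmega03}; restricting further to singular $1$-spaces, the stabilizer in $\G_2(q)$ of a singular point is a maximal parabolic of shape $[q^5]{:}\GL_2(q)$, and intersecting with $\Omega_5(q)$ (equivalently passing to the relevant subgroup of the Levi) cuts the $\GL_2(q)$ down to $\SL_2(q)$ while the unipotent part $[q^5]$ is retained by an order/structure count inside the radical. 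Concretely I would compute $X\cap Y=X\cap(M'\cap Y)$ where the chain of containments reduces the problem to intersecting a parabolic of $\G_2(q)$ with a parabolic of $\Omega_7(q)$, and then read off the structure using~\cite[3.7.4]{Wilson2009} and the parabolic structure of $\G_2(q)$.

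Once $X\cap Y=[q^5]{:}\SL_2(q)$ is established, the factorization follows from an order count exactly as in the earlier lemmas:
\[
\frac{|Y|}{|X\cap Y|}=\frac{|\G_2(q)|}{|[q^5]{:}\SL_2(q)|}=q^6(q^4-1)(q^6-1)\cdot\frac{1}{q^5(q^2-1)q\cdots}
\]
— more carefully, one checks $|Y|/|X\cap Y|=q^6(q^6-1)/\bigl(q(q^2-1)\bigr)$ matches $|Z|/|X|=|\Omega_7(q)|/|q^5{:}\Omega_5(q)|$, which is a routine verification using $|\Omega_7(q)|=q^9(q^2-1)(q^4-1)(q^6-1)$, $|\Omega_5(q)|=q^4(q^2-1)(q^4-1)$, $|\G_2(q)|=q^6(q^2-1)(q^6-1)$; hence $|X\cap Y|=|X||Y|/|Z|$ and therefore $Z=XY$.

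The step I expect to be the main obstacle is pinning down the exact structure of $X\cap Y$ — in particular, confirming that the full unipotent part $[q^5]$ survives the intersection and that the reductive part is exactly $\SL_2(q)$ rather than a larger or smaller subgroup of $\GL_2(q)$. This requires a careful analysis of how the $\G_2(q)$-parabolic sits inside the $\Omega_7(q)$-parabolic with respect to the chosen singular point, rather than just an order argument; the order count alone only pins down $|X\cap Y|$, not its isomorphism type, though combined with the list of subgroups of $\GL_2(q)$ of the right order it should suffice. I would lean on~\cite{LPS1990} for the maximal factorization $\Omega_7(q)=\Pa_1[\Omega_7(q)]\cdot\G_2(q)$ to know $Z=XY$ in advance, which lets me deduce $|X\cap Y|=|X||Y|/|Z|$ for free and reduces the remaining work to the structural identification.
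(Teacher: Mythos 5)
Your main route is essentially the paper's: the paper simply quotes~\cite[4.3.5]{Wilson2009} for $X\cap Y=[q^5]{:}\SL_2(q)$ (precisely the fact you sketch, that the stabilizer in $\G_2(q)$ of a singular $1$-space is a parabolic $[q^5]{:}\GL_2(q)$ meeting $X$ in $[q^5]{:}\SL_2(q)$) and then performs the same order count $|Y|/|X\cap Y|=q^6-1=|Z|/|X|$. Two caveats. First, your proposed shortcut via the maximal factorization of~\cite{LPS1990} is circular: that result gives $Z=\Pa_1[Z]\,Y$, not $Z=XY$, and since $X=\Pa_1[Z]^{(\infty)}$ has index $q-1$ in $\Pa_1[Z]$ you cannot deduce $|X\cap Y|=|X||Y|/|Z|$ ``for free''; the whole content of the lemma is that $Y\cap\Pa_1[Z]=[q^5]{:}\GL_2(q)$ covers the quotient $\Pa_1[Z]/X$ of order $q-1$ (equivalently that the reductive part of $X\cap Y$ is no larger than $\SL_2(q)$), which is exactly what the structural identification --- or the citation of Wilson's 4.3.5 --- must supply, so you correctly locate the crux but cannot assume it. Second, the displayed value $|Y|/|X\cap Y|=q^6(q^6-1)/\bigl(q(q^2-1)\bigr)$ is an arithmetic slip: with $|\G_2(q)|=q^6(q^2-1)(q^6-1)$ and $|[q^5]{:}\SL_2(q)|=q^6(q^2-1)$ the ratio is $q^6-1$, which indeed equals $|\Omega_7(q)|/|q^5{:}\Omega_5(q)|$, so the order count goes through once $X\cap Y$ is pinned down.
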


\begin{proof}
From~\cite[4.3.5]{Wilson2009} we see that $X\cap Y=[q^5]{:}\SL_2(q)$. Hence
\[
\frac{|Y|}{|X\cap Y|}=\frac{|\G_2(q)|}{|[q^5]{:}\SL_2(q)|}=q^6-1=\frac{|\Omega_7(q)|}{|q^5{:}\Omega_5(q)|}=\frac{|Z|}{|X|},
\]
and so $Z=XY$.
\end{proof}

\begin{lemma}\label{LemOmega06}
Let $Z=\Omega_7(q)$ with $m=3$, and let $Y=\G_2(q)<Z$. Then there exits a subgroup $X$ of $\Pa_1[Z]$ such that $X=q^4{:}\Omega^-_4(q)$ and $Z=XY$ with $X\cap Y=[q^3]$.
\end{lemma}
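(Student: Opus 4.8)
The plan is to produce the factorization inside $\Omega_6^-(q)$ and lift it, using Lemma~\ref{LemOmega03}. Take $\varepsilon=-$ there: then $Z=MY$, where $M:=\N_1^-[Z]^{(\infty)}=\Omega_6^-(q)$ is the derived stabilizer of a nonsingular point $\langle v_0\rangle$ with $v_0^\perp$ of minus type, and $M\cap Y=\SU_3(q)$. Identify $M$ with $\Omega(v_0^\perp)$. The structural point I would establish first is that, restricted to the subgroup $\SU_3(q)$, the $6$-dimensional orthogonal module $v_0^\perp$ is (a scalar multiple of) the $\bbF_q$-form $u\mapsto h(u,u)$ of a nondegenerate Hermitian $\bbF_{q^2}$-space $(\bbF_{q^2}^3,h)$ on which $\SU_3(q)$ acts naturally; this follows because the $7$-dimensional module of $Y=\G_2(q)$ restricts to $\SU_3(q)<\G_2(q)$ as $v_0^\perp$ together with the trivial module $\langle v_0\rangle$, and $\SU_3(q)$ acts $\bbF_q$-irreducibly on $\bbF_{q^2}^3$, so admits a unique invariant quadratic form up to scalar (whose type one checks is minus). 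Now fix a nonzero Hermitian-isotropic vector $e\in v_0^\perp$ and set $X:=\Pa_1[M]^{(\infty)}$, the derived subgroup of $\Stab_M(\langle e\rangle)$; since $\langle e\rangle$ is a singular point of $V$, we have $X\leqslant\Pa_1[Z]$.

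Next I would pin down $X$ and $X\cap Y$. The unipotent radical of $\Pa_1[\Omega_6^-(q)]$ is elementary abelian of order $q^4$, and its Levi factor is $\GL_1(q)\times\Omega_4^-(q)$ with $\Omega_4^-(q)\cong\PSL_2(q^2)$ perfect and acting irreducibly on the radical; hence $X=q^4{:}\Omega_4^-(q)$, of order $\tfrac12 q^6(q^4-1)$. Moreover $X=\Pa_1[M]^{(\infty)}$ is precisely the kernel of the homomorphism $\Stab_M(\langle e\rangle)\to\bbF_q^\times$ recording the eigenvalue of an element on the line $\langle e\rangle$ (the $\GL_1(q)$ factor of the Levi realizes every eigenvalue, and everything else in $\Stab_M(\langle e\rangle)$ fixes $e$), so $X$ is the stabilizer of the vector $e$. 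Since $X\leqslant M$, it follows that $X\cap Y=X\cap(M\cap Y)=X\cap\SU_3(q)=\Stab_{\SU_3(q)}(e)$; as $e$ is a nonzero isotropic vector of the Hermitian space and $\SU_3(q)$ is transitive on these (Witt's theorem), this stabilizer is the unipotent radical of a Borel subgroup of $\SU_3(q)$, of order $q^3$. Thus $X\cap Y=[q^3]$.

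Finally, $|X(M\cap Y)|=|X|\,|M\cap Y|/|X\cap Y|=\tfrac12 q^6(q^4-1)(q^2-1)(q^3+1)=|\Omega_6^-(q)|=|M|$, so $M=X(M\cap Y)$, and therefore $Z=MY=X(M\cap Y)Y=XY$. I expect the main obstacle to be the first step — correctly identifying the $\SU_3(q)$-module structure of the $6$-dimensional space $v_0^\perp$, i.e.\ matching the $\G_2(q)$-geometry supplied by Lemma~\ref{LemOmega03} with the natural Hermitian geometry of $\SU_3(q)$ — because everything after that (the shape of $\Pa_1[\Omega_6^-(q)]^{(\infty)}$, the passage to vector stabilizers, the transitivity statements, and the final order count) is routine bookkeeping with parabolics and Witt's theorem.
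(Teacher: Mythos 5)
Your proof is correct, but it takes a genuinely different route from the paper's. You factor through the nonsingular-point stabilizer: starting from $Z=MY$ with $M=\N_1^-[Z]^{(\infty)}=\Omega_6^-(q)$ and $M\cap Y=\SU_3(q)$ (Lemma~\ref{LemOmega03}), you identify $v_0^\perp$ as the $\bbF_q$-form of the natural Hermitian module for $\SU_3(q)$, take $X=M_e=q^4{:}\Omega_4^-(q)$ for a singular vector $e$, and verify $M=X(M\cap Y)$ by an order count, whence $Z=MY=XY$. The paper instead works directly inside $\Pa_1[Z]$ in explicit coordinates of an ambient $8$-dimensional plus-type space: it defines $X=R_w{:}T_w$ for a suitable nonsingular $w$, shows that the full unipotent radical $R=q^5$ of $Z_{x_1}$ lies in $XY_{x_1}$ by exhibiting explicit elements $\sigma(a)=h(a)k(a)$ with $h(a)\in X$ and $k(a)\in Y_{x_1}$ (using Wilson's explicit generators of $\G_2(q)$), reduces modulo $R$ to the $m=2$ case of Lemma~\ref{LemOmega02}, and finishes via transitivity of $Y$ on singular vectors. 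Your argument is shorter and avoids matrix computation; its one load-bearing step is the identification of the $\SU_3(q)$-module structure of $v_0^\perp$ (irreducibility over $\bbF_q$ plus uniqueness of the invariant quadratic form up to scalar, so that orthogonal-singular equals Hermitian-isotropic and $\SU_3(q)_e=q^{1+2}$), which you flag and justify adequately and which is in any case the content of the reference already invoked for Lemma~\ref{LemOmega03}. What the paper's explicit construction buys is a concrete realization of $X$ inside $\Pa_1[Z]$ in the coordinates used elsewhere, which is what the subsequent conjugacy analysis via Lemma~\ref{LemOmega20} is applied to; by that same lemma your $X$ is conjugate to the paper's, so nothing is lost.
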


\begin{proof}
We extend $\beta$ to a plus type $8$-dimensional orthogonal space over $\bbF_q$ with a basis $x_1,\dots, x_8$ such that
\[
\beta(x_i,x_j)=\delta_{i+j,9} 
\]
for $i,j\in\{1,\dots,8\}$. Let $x=x_4+x_5$ and $y=x_4-x_5$. Then $x$ and $y$ are nonsingular, and
\[
x^\perp=\langle x_1,x_2,x_3,x_6,x_7,x_8,y\rangle.
\]
Thus we may take $Z=\Omega(x^\perp)$. Let $W_1=\langle x_2,x_3,x_6,x_7,y\rangle$ and $V_1=\langle x_1\rangle\oplus W_1$. 
Then $V_1=x_1^\perp\cap x^\perp$ and hence is stabilized by $Z_{\langle x_1\rangle}$. Since $x_1$ is singular, we have $Z_{\langle x_1\rangle}=\Pa_1[Z]$ and $Z_{x_1}=R{:}T$ with $R=q^5$ and $T=\Omega_5(q)$, where $R$ is the kernel of $Z_{x_1}$ acting on $V_1/\langle x_1\rangle$, and $T=Z_{x_1,x_8,W_1}=\Omega(W_1)$. 

Take $\mu\in\bbF_q^\boxtimes$, and let $w=x_3+\mu x_6$, $z=x_3-\mu x_6$, $E=R_w$, $S=T_w$ and
\[
X=R_w{:}T_w=E{:}S.
\]
Then $w^\perp\cap W_1=\langle x_2,x_7,y,z\rangle$ is a minus type $4$-dimensional orthogonal space, and so $S=\Omega_4^-(q)$.
For $a\in\bbF_q$, let $\sigma(a)$ be the linear transformation on $x^\perp$ satisfying
\[
\sigma(a)\colon x_3\mapsto x_3-ax_1,\ x_8\mapsto x_8+ax_6
\]
and fixing the vectors $x_1,x_2,x_6,x_7,y$. It is straightforward to verify that $\sigma(a)\in\mathrm{O}(x^\perp)$. 
Then since $|\sigma(a)|$ is odd, we deduce that $\sigma(a)\in\Omega(x^\perp)=Z$. 
Moreover, since $\sigma(a)$ fixes $x_1$ and acts trivially on $V_1/\langle x_1\rangle$, we obtain $\sigma(a)\in R$. Let $F$ be the group generated by $\sigma(a)$ with $a$ running over $\bbF_q$. 
Note that $R$ stabilizes $w+\langle x_1\rangle$ as $R$ is the kernel of $Z_{x_1}$ on $V_1/\langle x_1\rangle$.
Then since $F$ is regular on $w+\langle x_1\rangle$, we conclude that $R=R_w\times F$. This together with $R=q^5$ and $F=q$ implies that $E=R_w=q^4$.
Hence $X=E{:}S=q^4{:}\Omega_4^-(q)$.

For $a\in\bbF_q$, let $h(a)$ and $k(a)$ be the linear transformations on $x^\perp$ satisfying
\begin{align*}
h(a)&\colon x_7\mapsto x_7+ay+a^2x_2,\ y\mapsto y+2ax_2\\
k(a)&\colon x_3\mapsto x_3-ax_1,\ x_7\mapsto x_7-ay+a^2x_2,\ x_8\mapsto x_8+ax_6,\ y\mapsto y-2ax_2
\end{align*}
and fixing the remaining vectors in the basis $x_1,x_2,x_3,x_6,x_7,x_8,y$. It is straightforward to verify that $h(a)k(a)=\sigma(a)$ and $h(a)\in\mathrm{O}(x^\perp)_{w,x_1,x_8,W_1}$. Then since $|h(a)|$ is odd, we deduce that $h(a)\in\Omega(x^\perp)_{w,x_1,x_8,W_1}=T_w=S<X$.
Note that there are precisely two conjugacy classes of subgroups $\G_2(q)$ in $Z$, fused in $\mathrm{O}(x^\perp)$ (see~\cite[Table~8.40]{BHR2013}).
We may assume that $Y=\G_2(q)$ is the subgroup of $Z$ described in~\cite[4.3.4]{Wilson2009}, so that by~\cite[Equation~(4.34)]{Wilson2009} we have $k(a)\in Y$. Then as $k(a)$ fixes $x_1$, we obtain $k(a)\in Y_{x_1}$. Now it follows from $\sigma(a)=h(a)k(a)$ that $F\subset XY_{x_1}$, and so
\begin{equation}\label{EqnOmega03}
R=R_wF\subset X(XY_{x_1})=XY_{x_1}.
\end{equation}

Let $\overline{\phantom{x}}\colon Z_{x_1}\to Z_{x_1}/R$ be the quotient modulo $R$. Then $\overline{X}=\Omega_4^-(q)$. 
From~\cite[4.3.5]{Wilson2009} we see that $\overline{Y_{x_1}}=q^{1+2}{:}\SL_2(q)$. Consequently, $\overline{Z_{x_1}}=\overline{X}\,\overline{Y_{x_1}}$ by Lemma~\ref{LemOmega02} (it works the same for $m=2$). This together with~\eqref{EqnOmega03} yields $Z_{x_1}=XY_{x_1}$ by Lemma~\ref{LemXia01}. 
Since $Y$ is transitive on the set of singular vectors in $x^\perp$, it follows that $Z=Z_{x_1}Y=(XY_{x_1})Y=XY$, which then implies 
\[
|X\cap Y|=\frac{|X||Y|}{|Z|}=\frac{|q^4{:}\Omega_4^-(q)||\G_2(q)|}{|\Omega_7(q)|}=q^3,
\]
as required.
\end{proof}

In the next two lemmas we show the factor pairs $(X,Y)$ in Rows~3--4 of Table~\ref{TabOmega}.
For each $\varepsilon\in\{+,-\}$, there are precisely two conjugacy classes of subgroups of $\G_2(3^f)$ that are isomorphic to $\SL_3^\varepsilon(q)$ (see~\cite[Table~8.42]{BHR2013}).

\begin{lemma}\label{LemOmega07}
Let $Z=\Omega_7(3^f)$ with $p=3$, let $M=\G_2(q)<Z$, and let $Y=\N_1^\varepsilon[Z]^{(\infty)}=\Omega_6^\varepsilon(q)$ with $\varepsilon\in\{+,-\}$. Then there is precisely one conjugacy class of subgroups $X$ of $M$ isomorphic to $\SL_3^{-\varepsilon}(q)$ such that $Z=XY$.
For such a pair $(X,Y)$ we have $X\cap Y=q^2-1$.
\end{lemma}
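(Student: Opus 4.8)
The plan is to combine a known embedding of the factors inside $\G_2(q)$ with an order count, as in Lemmas~\ref{LemOmega03}--\ref{LemOmega05}. First I would fix the notation: write $M=\G_2(q)$, and inside $M$ take $X$ to be a subgroup isomorphic to $\SL_3^{-\varepsilon}(q)$, so that $X$ is contained in one of the two $M$-classes of such subgroups described in~\cite[Table~8.42]{BHR2013}. On the $7$-dimensional orthogonal space $V$, each such $\SL_3^{-\varepsilon}(q)$ acts reducibly, and I would use~\cite[4.3.6]{Wilson2009} (the description used for Lemma~\ref{LemOmega03}) to identify how it sits relative to the nondegenerate hyperplane decompositions: a subgroup $\SL_3^{-\varepsilon}(q)<\G_2(q)$ should stabilize a nondegenerate $6$-space of type $\varepsilon$ (its natural module together with the dual gives the $6$-dimensional piece), hence lies inside some conjugate of $Y=\Omega_6^\varepsilon(q)$, and inside $Y=\Omega_6^\varepsilon(q)\cong\SL_4^\varepsilon(q)/\{\pm1\}$ it is a subgroup $\SL_3^{-\varepsilon}(q)$ — note the sign switch, since $\SL_3^{-\varepsilon}(q)$ embeds in $\SL_4^\varepsilon(q)$ as a stabilizer of a nondegenerate $1$-space, not in $\SL_4^{-\varepsilon}(q)$.

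Having located $X$ inside $Y$, the next step is the intersection $X\cap Y$. Because $X\leqslant Y$ would make the factorization trivial and force $X\cap Y=X$, the point is that there are two classes of $\SL_3^{-\varepsilon}(q)$ in $M$ and only one of them is conjugate into $Y$; for the other class $X\cap Y$ is a proper subgroup. I would compute $|X\cap Y|$ directly from the formula $|X\cap Y|=|X||Y|/|Z|$ once $Z=XY$ is established, which gives
\[
|X\cap Y|=\frac{|\SL_3^{-\varepsilon}(q)|\,|\Omega_6^\varepsilon(q)|}{|\Omega_7(q)|}=q^2-1,
\]
after cancelling using $|\Omega_7(q)|/|\Omega_6^\varepsilon(q)|=q^3(q^3+\varepsilon1)$ and $|\SL_3^{-\varepsilon}(q)|=q^3(q^3-\varepsilon1)(q^2-1)(q-\varepsilon1)/(q-\varepsilon1)=q^3(q^3-\varepsilon1)(q^2-1)$. (I would double-check the prime-power bookkeeping, but the cancellation is forced by the $\G_2$ order $q^6(q^6-1)(q^2-1)$.) So the content reduces to proving $Z=XY$, i.e. that for exactly one of the two $M$-classes of subgroups $\SL_3^{-\varepsilon}(q)$ the product $XY$ exhausts $Z$.

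For the equality $Z=XY$, the cleanest route is a counting/transitivity argument: $Y=\Omega_6^\varepsilon(q)=\N_1^\varepsilon[Z]^{(\infty)}$ is the stabilizer in $Z$ (up to index $2$) of a nonsingular $1$-space $\langle v\rangle$ with $v^\perp$ of type $-\varepsilon$, so $Z=XY$ is equivalent to $X$ being transitive on the $Z$-orbit of $\langle v\rangle$, a set of size $q^3(q^3+\varepsilon1)$ (the number of nonsingular points $\langle u\rangle$ with $u^\perp$ of the appropriate type). I would show that one class of $\SL_3^{-\varepsilon}(q)<\G_2(q)$ acts transitively on this orbit by exhibiting the orbit as a coset space $\SL_3^{-\varepsilon}(q)/(q^2-1)$, using that the point stabilizer in $\SL_3^{-\varepsilon}(q)$ of such a $\langle v\rangle$ is the centralizer of a regular semisimple element / a maximal torus of order $q^2-1$ — this matches the $6$-dimensional module structure, where the nonsingular $1$-spaces correspond to certain $1$-dimensional subspaces of the natural $\SL_3^{-\varepsilon}$-module and its dual. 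The alternative class, which instead preserves a $6$-space of type $-\varepsilon$, cannot be transitive on the $\varepsilon$-orbit, which is why only one class works. The main obstacle I anticipate is precisely this last point: pinning down which of the two $\G_2(q)$-classes of $\SL_3^{-\varepsilon}(q)$ is the right one and verifying the transitivity cleanly, since it requires a careful analysis of the restriction of the $7$-dimensional (and $6$-dimensional) modules to $\SL_3^{-\varepsilon}(q)$ rather than a pure order computation; I would lean on~\cite[4.3.6]{Wilson2009} and the factorization $\Omega_7(q)=\Omega_6^\varepsilon(q)\G_2(q)$ of Lemma~\ref{LemOmega03} together with Lemma~\ref{LemXia05} (applied to that maximal factorization) to force the conclusion, rather than re-deriving the module structure from scratch.
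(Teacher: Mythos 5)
Your structural setup is incorrect, and the error sits under every step of the outline. You claim that $X\cong\SL_3^{-\varepsilon}(q)<M=\G_2(q)$ stabilizes a nondegenerate $6$-space of type $\varepsilon$ and hence lies in a conjugate of $Y=\Omega_6^{\varepsilon}(q)$, justified by a ``sign switch'' $\SL_3^{-\varepsilon}(q)<\SL_4^{\varepsilon}(q)$. This is backwards: the reducible copy of $\SL_3^{\delta}(q)$ in $\G_2(q)$ acts on $V$ as natural plus dual plus trivial and stabilizes a $6$-space of type $\delta$ (same sign) --- that is exactly what Lemma~\ref{LemOmega03} records, namely $\G_2(q)\cap\Omega_6^{\varepsilon}(q)=\SL_3^{\varepsilon}(q)$ --- and the natural point/hyperplane stabilizer embeddings are $\SU_3(q)<\SU_4(q)$ and $\SL_3(q)<\SL_4(q)$, not the crossed ones. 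In fact $\SL_3^{-\varepsilon}(q)$ embeds in no conjugate of $\Omega_6^{\varepsilon}(q)$ at all: a primitive prime divisor of $q^{6}-1$ divides $|\SU_3(q)|$ but not $|\Omega_6^{+}(q)|$, and one of $q^{3}-1$ divides $|\SL_3(q)|$ but not $|\Omega_6^{-}(q)|$. Your dichotomy ``one class of $\SL_3^{-\varepsilon}(q)$ preserves an $\varepsilon$-type $6$-space, the other a $(-\varepsilon)$-type one'' is also false: the two classes in $\G_2(3^f)$ (see \cite[Table~8.42]{BHR2013}) are the long-root (reducible) one, of type $-\varepsilon$, and its image under the characteristic-$3$ graph morphism, which acts \emph{irreducibly} on the $7$-dimensional module. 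Only the irreducible class can satisfy $Z=XY$: if $X$ lay in some $\N_1$-subgroup, then $Z=XY$ would force a maximal factorization $Z=\N_1^{-\varepsilon}[Z]\,\N_1^{\varepsilon}[Z]$ of $\Omega_7(q)$, which does not exist by \cite[Theorem~A]{LPS1990}.

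Because of this, the substantive content is missing from your argument. What has to be proved is the internal factorization $M=X(M\cap Y)$ with $M\cap Y=\SL_3^{\varepsilon}(q)$, that exactly one $M$-class of subgroups $\SL_3^{-\varepsilon}(q)$ achieves it, and that then $X\cap(M\cap Y)$ is a torus of order $q^2-1$; your proposed transitivity argument for this rests on the erroneous ``natural module plus dual'' picture of $X$ (the relevant $X$ has no such invariant point or $6$-space), and the fallback via \cite[4.3.6]{Wilson2009} and Lemma~\ref{LemXia05} applied to $Z=MY$ does not produce these facts. The paper imports precisely this input from the proof of Proposition~A of \cite{HLS1987} and then concludes, as you do at the end, by the order count $|\SL_3^{-\varepsilon}(q)|/(q^2-1)=q^3(q^3+\varepsilon 1)=|Z|/|Y|$ together with $X\cap Y=X\cap(M\cap Y)$ (note also that your order formula $q^3(q^3-\varepsilon 1)(q^2-1)$ for $|\SL_3^{-\varepsilon}(q)|$ has the wrong sign, and with it the claimed cancellation to $q^2-1$ would not even be an integer). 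So the skeleton (reduce to a factorization inside $\G_2(q)$, then count orders) is right, but the key class identification and intersection computation are neither correctly stated nor proved.
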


\begin{proof}
By Lemma~\ref{LemOmega03} we have $M\cap Y=\SL_3^\varepsilon(q)$. Moreover, it is shown in the proof of~\cite[Proposition~A]{HLS1987} that there is precisely one conjugacy class of subgroups $X$ of $M$ isomorphic to $\SL_3^{-\varepsilon}(q)$ with $M=X(M\cap Y)$, and each such $X$ satisfies $X\cap(M\cap Y)=q^2-1$. Then since $X\cap(M\cap Y)=X\cap Y$ and
\[
\frac{|\SL_3^{-\varepsilon}(q)|}{q^2-1}=q^3(q^3+\varepsilon 1)=\frac{|\Omega_7(q)|}{|\Omega_6^\varepsilon(q)|}=\frac{|Z|}{|Y|},
\]
the lemma holds.
\end{proof}

\begin{lemma}\label{LemOmega08}
Let $Z=\Omega_7(3^f)$ with $p=3$ and $f$ odd, let $M=\G_2(q)<Z$, let $X={^2}\G_2(q)<M$, and let $Y=\N_1^+[Z]^{(\infty)}=\Omega_6^+(q)$. 
Then $Z=XY$ with $X\cap Y=\frac{q-1}{2}.2$.
\end{lemma}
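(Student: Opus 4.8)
The plan is to follow the same template as Lemmas~\ref{LemOmega03}--\ref{LemOmega07}: locate the subgroup $X={^2}\G_2(q)$ inside $M=\G_2(q)$, compute the intersection $X\cap Y=X\cap(M\cap Y)$ via the known intersection $M\cap Y=\SL_3^+(q)$ from Lemma~\ref{LemOmega03}, and then verify the order identity $|X|/|X\cap Y|=|Z|/|Y|$ to conclude $Z=XY$. First I would recall that $M\cap Y=\SL_3(q)$ by Lemma~\ref{LemOmega03} (taking $\varepsilon=+$), so that $X\cap Y=X\cap(M\cap Y)=X\cap\SL_3(q)$, and the problem reduces to determining how a Ree group ${^2}\G_2(q)$ meets a subgroup $\SL_3(q)$ of $\G_2(q)$.

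Next I would identify $X\cap\SL_3(q)$ as a subgroup of ${^2}\G_2(q)$. The maximal subgroups of ${^2}\G_2(q)$ (for $q=3^f$, $f$ odd, $q>3$) are well known (Kleidman, or \cite{BHR2013}); among the reducible/small subgroups one has the normalizer of a maximal torus of order $q-1$, namely $(q-1)\times2$ or a dihedral-type extension, and more precisely a subgroup $\frac{q-1}{2}\times2$ sitting inside the appropriate maximal subgroup. The claim $X\cap Y=\frac{q-1}{2}.2$ should follow by matching the $\SL_3(q)$-section of $\G_2(q)$ against the subgroup structure of ${^2}\G_2(q)$: the intersection is the stabilizer in ${^2}\G_2(q)$ of the relevant $\SL_3(q)$-configuration, which is a torus normalizer of the stated shape. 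I expect this to be extractable either from the explicit embedding ${^2}\G_2(q)<\G_2(q)$ together with the description of $\SL_3(q)<\G_2(q)$ in \cite[4.3.6]{Wilson2009}, or directly from the list of factorizations of $\G_2(q)$ implicit in \cite{HLS1987} (indeed $\G_2(q)={^2}\G_2(q)\cdot\SL_3(q)$ is one of the exceptional factorizations of $\G_2(q)$ recorded there).

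Once $|X\cap Y|=\frac{q-1}{2}\cdot2=q-1$ is established, the order computation is routine: $|{^2}\G_2(q)|=q^3(q^3+1)(q-1)$, so $|X|/|X\cap Y|=q^3(q^3+1)=|\Omega_7(q)|/|\Omega_6^+(q)|=|Z|/|Y|$, which by the standard counting argument (as in the previous lemmas) forces $Z=XY$.

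The main obstacle is the second step: pinning down the exact isomorphism type $\frac{q-1}{2}.2$ of the intersection $X\cap Y$ rather than just its order. Establishing the order $q-1$ suffices for the factorization conclusion $Z=XY$, but the precise structure requires knowing how the torus normalizer of ${^2}\G_2(q)$ sits relative to the $\SL_3(q)$-section of $\G_2(q)$ — in particular which involutions of the ${^2}\G_2(q)$-torus normalizer lie in $\SL_3(q)$. I would resolve this by direct inspection using the generators of ${^2}\G_2(q)$ and $\SL_3(q)$ inside $\G_2(q)$ from \cite{Wilson2009}, or by citing the relevant entry of \cite{HLS1987} where this factorization and its intersection are already analyzed; either way the argument is short once the group-theoretic bookkeeping is in place.
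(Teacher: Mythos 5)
Your proposal is correct and follows essentially the same route as the paper: reduce to $X\cap Y=X\cap(M\cap Y)$ with $M\cap Y=\SL_3(q)$ from Lemma~\ref{LemOmega03}, identify this intersection as $\frac{q-1}{2}.2$ of order $q-1$, and conclude $Z=XY$ by the order count $|X|/|X\cap Y|=q^3(q^3+1)=|Z|/|Y|$. The step you flag as the main obstacle is handled in the paper exactly by your second suggested option, namely citing the proof of Proposition~B in \cite{HLS1987}, where the factorization $\G_2(q)={}^2\G_2(q)\,\SL_3(q)$ and the intersection $\frac{q-1}{2}.2$ are established.
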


\begin{proof}
By Lemma~\ref{LemOmega03} we have $M\cap Y=\SL_3(q)$. Moreover, it is shown in the proof of~\cite[Proposition~B]{HLS1987} that $X\cap(M\cap Y)=\frac{q-1}{2}.2$. Hence $X\cap Y=X\cap(M\cap Y)=\frac{q-1}{2}.2$, and so
\[
\frac{|X|}{|X\cap Y|}=\frac{|{^2}\G_2(q)|}{q-1}=q^3(q^3+1)=\frac{|\Omega_7(q)|}{|\Omega_6^+(q)|}=\frac{|Z|}{|Y|}.
\]
This implies that $Z=XY$.
\end{proof}

\begin{remark}
If we let $X={^2}\G_2(q)'$ in Lemma~\ref{LemOmega08}, then computation in \magma~\cite{BCP1997} shows that the conclusion $Z=XY$ would not hold for $q=3$. 
\end{remark}

The factor pairs $(X,Y)$ in Rows~5 and~6 of Table~\ref{TabOmega} are constructed in the following Lemmas~\ref{LemOmega09} and~\ref{LemOmega10}, respectively.

\begin{lemma}\label{LemOmega09}
Let $Z=\Omega_{13}(3^f)$ with $p=3$, let $X=\PSp_6(q)<Z$, and let $Y=\N_1^-[Z]^{(\infty)}=\Omega_{12}^-(q)$. Then $Z=XY$ with $X\cap Y=(\SL_2(q)\times\SL_2(q^2))/\{\pm1\}$.
\end{lemma}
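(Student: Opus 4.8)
The plan is to follow the template already established in Lemmas~\ref{LemOmega03}--\ref{LemOmega05}: identify the intersection $X\cap Y$ from a known embedding, then verify the arithmetic identity $|X|/|X\cap Y|=|Z|/|Y|$ and invoke the trivial factorization criterion. First I would recall the geometry: here $X=\PSp_6(q)$ sits inside $Z=\Omega_{13}(q)$ via the spin/restricted representation, i.e. the $13$-dimensional orthogonal module for $\Omega_{13}(q)$ is (the nontrivial constituent of) $\Lambda^2$ of the natural $6$-dimensional symplectic module for $\Sp_6(q)$ when $p=3$ — this is precisely the embedding appearing in the maximal factorizations list of Liebeck--Praeger--Saxl~\cite{LPS1990}, and $Y=\N_1^-[Z]^{(\infty)}=\Omega_{12}^-(q)$ is the stabilizer of a nonsingular $1$-space. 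So $X\cap Y$ is the stabilizer in $\PSp_6(q)$ of a nonsingular vector in the $13$-dimensional module, equivalently the stabilizer in $\Sp_6(q)$ (modulo $\{\pm1\}$) of a suitable point of $\Lambda^2(\bbF_q^6)$.

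The key computational step is to pin down that point stabilizer. I expect the relevant nonsingular vectors of the orthogonal $13$-space to correspond to nondegenerate decomposable/bilinear forms of a certain rank on the $6$-space, whose stabilizer in $\Sp_6(q)$ is $(\SL_2(q)\times\SL_2(q^2)).2$ or a closely related group — one recognises $\SL_2(q)\times\SL_2(q^2)$ as (roughly) $\Sp_2(q)\times\Sp_2(q^2)$, the stabilizer of an orthogonal decomposition $\bbF_q^6=\bbF_q^2\perp\bbF_{q^2}^2$ of the symplectic space into a line plus a $\bbF_{q^2}$-line. After quotienting by the central $\{\pm1\}$ (which is the kernel of $\Sp_6(q)\to\PSp_6(q)$ and lies in $X\cap Y$) this yields $X\cap Y=(\SL_2(q)\times\SL_2(q^2))/\{\pm1\}$ as claimed. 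Rather than rederive this from scratch, I would cite the maximal factorization $\Omega_{13}(q)=\Omega_{12}^-(q)\,\PSp_6(q)$ from~\cite[Table~4]{LPS1990} (or the corresponding table for $\Sp_6(q)\le\Omega_{13}(q)$), which records exactly the intersection, and simply transcribe it.

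Once $X\cap Y$ is identified, the remainder is the order check. I would compute
\[
\frac{|X|}{|X\cap Y|}=\frac{|\PSp_6(q)|}{|(\SL_2(q)\times\SL_2(q^2))/\{\pm1\}|}=\frac{|\PSp_6(q)|}{|\SL_2(q)||\SL_2(q^2)|/2}
\]
and verify it equals $|\Omega_{13}(q)|/|\Omega_{12}^-(q)|=q^6(q^6+1)$, using $|\Sp_6(q)|=q^9(q^2-1)(q^4-1)(q^6-1)$, $|\SL_2(q)|=q(q^2-1)$, $|\SL_2(q^2)|=q^2(q^4-1)$; this is a routine polynomial identity (note the factors of $2$ and the central $\{\pm1\}$ cancel correctly since $p$ is odd). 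Then $Z=XY$ follows immediately, e.g.\ by the standard fact that $|X||Y|=|Z||X\cap Y|$ forces the product to be everything. The only genuine obstacle is ensuring the embedding $\PSp_6(q)\le\Omega_{13}(q)$ and the precise form of $X\cap Y$ are quoted correctly — in particular that the relevant nonsingular $1$-space of the spin module is one whose $\PSp_6(q)$-stabilizer has the stated structure and that it is of minus type so that $Y=\Omega_{12}^-(q)$ and not $\Omega_{12}^+(q)$; this is exactly what the Liebeck--Praeger--Saxl classification guarantees, so I would lean on that reference and keep the proof as short as the preceding lemmas.
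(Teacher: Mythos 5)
Your overall strategy coincides with the paper's (use the $\Lambda^2$-embedding of $\Sp_6(q)$ into the $13$-dimensional orthogonal space, quote Liebeck--Praeger--Saxl for the intersection, then do the order count), but as written it has a gap exactly at the one nontrivial point. What LPS provides (in the proof of Lemma~A in~[4.6.3] of that memoir) is the intersection of $X=\PSp_6(q)$ with the \emph{maximal} subgroup $B=\N_1^-[Z]=Z_{\langle u\rangle}$, namely $X_{\langle u\rangle}=(\SL_2(q)\times\SL_2(q^2).2)/\{\pm1\}$. Your $Y=\N_1^-[Z]^{(\infty)}=Z_u=\Omega_{12}^-(q)$ is an index-$2$ subgroup of $B$, so $X\cap Y$ is either $X\cap B$ or an index-$2$ subgroup of it, and one must determine which, and moreover which index-$2$ subgroup (the claimed answer removes the ``$.2$'' on the $\SL_2(q^2)$ part rather than halving either $\SL_2$ factor). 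This is not a cosmetic issue: if $X\cap Y$ were all of $X\cap B$, then $|X|/|X\cap Y|=\tfrac{1}{2}q^6(q^6-1)<|Z|/|Y|$ and $Z=XY$ would fail, so ``simply transcribing'' the LPS intersection cannot finish the proof. The paper resolves it by looking at the action induced on the nondegenerate $4$-dimensional symplectic subspace $S=\langle E_1,F_1,E_2,F_2\rangle$ supporting $u$: the $1$-space stabilizer induces $2.\mathrm{O}_4^-(q)=\SL_2(q^2).2$ on $S$, while the vector stabilizer induces $2.\Omega_4^-(q)=\SL_2(q^2)$, whence $X\cap Y$ has index $2$ in $X\cap B$ and has the stated structure. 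Your sketch flags the ambiguity (``$(\SL_2(q)\times\SL_2(q^2)).2$ or a closely related group'') but never resolves it.

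There is also a numerical slip: $|\Omega_{13}(q)|/|\Omega_{12}^-(q)|=q^6(q^6-1)$, not $q^6(q^6+1)$ (the latter is the plus-type index). Your own ratio $|\PSp_6(q)|\big/\bigl(|\SL_2(q)||\SL_2(q^2)|/2\bigr)$ equals $q^6(q^6-1)$, so with the value you wrote the ``routine polynomial identity'' would not check out; with the corrected value it does, exactly as in the paper's proof.
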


\begin{proof}
We follow the setup in~\cite[4.6.3(a)]{LPS1990}. Let $\beta_0$ be a nondegenerate alternating form on the natural $6$-dimensional module $V_0$ preserved by $X$, let $E_1,F_1,E_2,F_2,E_3,F_3$ be a standard basis for $V_0$ with respect to $\beta_0$, and let
\[
V_1=\langle u\wedge w\mid u,w\in V_0,\,\beta_0(u,w)=0\rangle.
\]
Then $V_1$ is a $14$-dimensional submodule of the alternating square $\Lambda^2V_0$, and $V_1$ has a trivial $1$-dimensional $X$-submodule $V_2$ with $V_1/V_2$ irreducible. We may assume $V=V_1/V_2$ and
\[
\beta(u_1\wedge w_1,u_2\wedge w_2)=\beta_0(u_1,u_2)\beta_0(w_1,w_2)-\beta_0(u_1,w_2)\beta_0(u_2,w_1)
\]
for all $u_i,w_i$ among the basis vectors $E_1,F_1,E_2,F_2,E_3,F_3$. Let
\[
S=\langle E_1,F_1,E_2,F_2\rangle
\] 
and let $B=\N_1^-[Z]$ be a maximal subgroup of $Z$ containing $Y$. 
Then we may assume that $B=Z_{\langle u\rangle}$ with $u=E_1\wedge E_2+\mu F_1\wedge F_2$ for some $\mu\in\bbF_q$. 
Thus $Y=Z_u$, and it is shown in the proof of Lemma~A in~\cite[4.6.3]{LPS1990} that
\begin{equation}\label{EqnOmega01}
X\cap B=X_{\langle u\rangle}=(\SL_2(q)\times\SL_2(q^2).2)/\{\pm1\}
\end{equation}
with $(X\cap B)_S^S=\Sp(S)_{\langle u\rangle}=(2.\Omega_5(q))_{\langle u\rangle}=2.\mathrm{O}_4^-(q)=\SL_2(q^2).2$.
Now $X\cap Y=X_u$, and so $(X\cap Y)_S^S=\Sp(S)_u=(2.\Omega_5(q))_u=2.\Omega_4^-(q)=\SL_2(q^2)$ has index $2$ in $(X\cap M)_S^S$. 
This implies that $X\cap Y$ has index $2$ in $X\cap B$. Then we deduce from~\eqref{EqnOmega01} that 
\[
X\cap Y=(\SL_2(q)\times\SL_2(q^2))/\{\pm1\}.
\]
Hence
\[
\frac{|X|}{|X\cap Y|}=\frac{|\PSp_6(q)|}{|(\SL_2(q)\times\SL_2(q^2))/2|}=q^6(q^6-1)=\frac{|\Omega_{13}(q)|}{|\Omega_{12}^-(q)|}=\frac{|Z|}{|Y|},
\]
which leads to $Z=XY$.
\end{proof}

\begin{lemma}\label{LemOmega10}
Let $Z=\Omega(V)=\Omega_{25}(3^f)$ with $p=3$, let $X=\F_4(q)<Z$, and let $Y=Z_v$. Then $Z=XY$ with $Y=\Omega_{24}^-(q)$ and $X\cap Y=\mathrm{Spin}_8^-(q)=2.\Omega_8^-(q)$.
\end{lemma}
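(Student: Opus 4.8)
The plan is to follow the same template used in the preceding lemmas: exhibit $X=\F_4(q)$ and $Y=Z_v$ inside $Z=\Omega_{25}(q)$ via an explicit embedding, identify the intersection $X\cap Y$ geometrically, and then verify the factorization by the index count $|X|/|X\cap Y|=|Z|/|Y|$ together with Lemma~\ref{LemXia01} (or directly, since a subgroup of the right order that contains a factor pair suffices). First I would recall the standard embedding $\F_4(q)<\Omega_{25}(q)$ arising from the action of $\F_4(q)$ on the $26$-dimensional module, which over odd characteristic has a $25$-dimensional nondegenerate orthogonal constituent (the trace-zero part of the exceptional Jordan algebra); this is the module realizing $\F_4(q)$ as a $\C_9$-subgroup, listed in~\cite[Table~8.?]{BHR2013} and in~\cite{LPS1990}. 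Identifying $V$ with this constituent, the form $\beta$ is the one preserved by $\F_4(q)$, and $Y=Z_v$ is the stabilizer of the nonsingular vector $v=e_1+\lambda f_1$, so $Y=\Omega(v^\perp)=\Omega_{24}^-(q)$ as recorded in the Notation section.

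Next I would compute $X\cap Y=X_v$, the stabilizer in $\F_4(q)$ of a suitable nonsingular vector in the $25$-dimensional module. The key structural input is the known orbit decomposition of $\F_4(q)$ on the nonsingular $1$-spaces of this module: one of the orbits (the one meeting $\langle v\rangle$) has point stabilizer $\mathrm{Spin}_8^-(q).\mathrm{S}_3$ or a related group, and the stabilizer of the vector $v$ itself (as opposed to the $1$-space) is $\mathrm{Spin}_9(q)$ on one orbit and $2.\Omega_8^-(q)=\mathrm{Spin}_8^-(q)$ on the relevant orbit — this is exactly the statement that $X\cap Y=2.\Omega_8^-(q)$. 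I would extract this from~\cite[4.?]{LPS1990} (the analysis of the maximal factorization $\Omega_{25}(q)=\N_1[\Omega_{25}(q)]\cdot\F_4(q)$ appearing in their tables) or from the representation-theoretic description in~\cite{Wilson2009}; alternatively one can argue directly that $\mathrm{Spin}_9(q)$ fixes a nonsingular vector while $\mathrm{Spin}_8^-(q)$ is the stabilizer on the complementary minus-type orbit, and check that $v$ lies in the latter because $\lambda\in\bbF_q^\boxtimes$ forces $v^\perp$ to have minus type.

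Finally, with $X\cap Y=2.\Omega_8^-(q)$ in hand, the factorization follows from the arithmetic identity
\[
\frac{|X|}{|X\cap Y|}=\frac{|\F_4(q)|}{|2.\Omega_8^-(q)|}=q^{24}-q^{12}=q^{12}(q^{12}-1)=\frac{|\Omega_{25}(q)|}{|\Omega_{24}^-(q)|}=\frac{|Z|}{|Y|},
\]
which I would verify using $|\F_4(q)|=q^{24}(q^{12}-1)(q^8-1)(q^6-1)(q^2-1)$ and $|\Omega_8^-(q)|=q^{12}(q^4+1)(q^6-1)(q^4-1)(q^2-1)/\gcd(2,q-1)$; since $X\cap Y\leqslant Z_v=Y$ and the indices match, $Z=XY$ holds (this is the standard criterion that a factorization is equivalent to $|X||Y|=|Z||X\cap Y|$ once $X\cap Y$ is known to equal $X\cap Y$ computed inside $Z$).

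The main obstacle will be pinning down the isomorphism type of $X\cap Y$ precisely — in particular distinguishing the vector stabilizer $2.\Omega_8^-(q)$ from the $1$-space stabilizer, which carries an extra $.2$ or $.\mathrm{S}_3$, and confirming that the orbit containing $v$ is the minus-type one rather than the plus-type or $\mathrm{Spin}_9$ orbit. This requires either a careful invocation of the orbit analysis in~\cite{LPS1990} for this specific $\C_9$-maximal, or an explicit computation in the Jordan-algebra model tracking how the $\F_4$-orbits on nonsingular vectors correspond to the norm value modulo squares; once that identification is secure, the order computation is routine.
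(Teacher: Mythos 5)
Your overall skeleton agrees with the paper's: both reduce the lemma to the single claim $X\cap Y=X_v=\mathrm{Spin}_8^-(q)$ and then conclude $Z=XY$ from the index identity $|\F_4(q)|/|2.\Omega_8^-(q)|=q^{12}(q^{12}-1)=|Z|/|Y|$ (your arithmetic here is correct). But the step you defer -- ``pinning down the isomorphism type of $X\cap Y$ precisely'' -- is not a routine citation; it is the entire content of the paper's proof, and as written your proposal does not close it. What the literature gives directly (Cohen--Cooperstein, and the maximal factorization analysis in \cite{LPS1990}) is the stabilizer of the nonsingular \emph{$1$-space}: $X_{\langle v\rangle}=\mathrm{Spin}_8^-(q).2$. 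The lemma needs the stabilizer of the \emph{vector} $v$, i.e.\ one must show that the outer involution of $X_{\langle v\rangle}$ sends $v$ to $-v$ rather than fixing it; if instead $X_v=X_{\langle v\rangle}$ the order count fails by a factor of $2$ and there is no factorization. Your ``direct argument'' -- that $\lambda\in\bbF_q^\boxtimes$ forces $v^\perp$ to have minus type, so $v$ lies in the minus-type orbit -- only identifies which orbit of $1$-spaces $\langle v\rangle$ belongs to (this is already built into $Y=\Omega_{24}^-(q)$); it says nothing about the index of $X_v$ in $X_{\langle v\rangle}$, which is the actual issue.

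The paper resolves this with a genuinely nontrivial argument: starting from $X_{\langle v\rangle}=\mathrm{Spin}_8^-(q).2$ (from \cite[Table~2]{CC1988}), it takes a maximal subgroup $M$ of $\F_4(q)$ containing $X_{\langle v\rangle}$, uses the Liebeck--Saxl bound on orders of maximal subgroups of exceptional groups together with divisibility by $q^8-1$ to force $M=\mathrm{Spin}_9(q)$, then uses Liebeck's restriction theorem to see that the $M$-submodules of $V$ are natural ($9$-dimensional) or spin ($16$-dimensional), writes $v=x+y$ accordingly, and rules out the possibility $X_v=X_{\langle v\rangle}$ by a case analysis (using the known point stabilizers of $\Omega_9(q)$ on its spin module to eliminate the degenerate cases). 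If you want to avoid reproducing that argument you would need a reference that records the \emph{vector} stabilizer $2.\Omega_8^-(q)$ in the $25$-dimensional module, not merely the $1$-space stabilizer; neither \cite{LPS1990} nor \cite{Wilson2009} is quoted in the paper as doing so. Until that distinction is established, your proof is incomplete at exactly the point you flag as the ``main obstacle.''
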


\begin{proof}
Clearly, $Y=\Omega_{24}^-(q)$. Since
\[
\frac{|X|}{|\mathrm{Spin}_8^-(q)|}=\frac{|\F_4(q)|}{|2.\Omega_8^-(q)|}=q^{12}(q^{12}-1)=\frac{|\Omega_{25}(q)|}{|\Omega_{24}^-(q)|}=\frac{|Z|}{|Y|},
\]
it suffices to prove $X\cap Y=\mathrm{Spin}_8^-(q)$, that is, $X_v=\mathrm{Spin}_8^-(q)$. By~\cite[Table~2]{CC1988} we have
\begin{equation}\label{EqnOmega02}
X_{\langle v\rangle}=\mathrm{Spin}^-_8(q).2=(2.\Omega_8^-(q)).2.
\end{equation}
Let $M$ be a maximal subgroup of $X$ containing $X_{\langle v\rangle}$. Then we deduce from $M\geqslant X_{\langle v\rangle}$ that $M_{\langle v\rangle}\geqslant X_{\langle v\rangle}$, while from $M\leqslant X$ we obtain $M_{\langle v\rangle}\leqslant X_{\langle v\rangle}$. Consequently, 
\[
M_{\langle v\rangle}=X_{\langle v\rangle}.
\]
Since $|M|\geqslant|X_{\langle v\rangle}|>q^{24}$, the Theorem of~\cite{LS1987} asserts that $M$ is either parabolic or one of the groups $\mathrm{Spin}_9(q)$, $\mathrm{Spin}^+_8(q).\Sy_3$, $^3{}\D_4(q).3$ and $\F_4(q^{1/2})$. Since $|M|$ is divisible by $q^8-1$, we conclude that 
\[
M=\mathrm{Spin}_9(q)=2.\Omega_9(q). 
\]
As $\bbF_q$ is a splitting field for $M$ (see~\cite[Page~241]{Steinberg1968}), we deduce from~\cite[Theorem~1.1]{Liebeck1983} that each irreducible submodule of $M$ acting on $V$ is either the natural module of dimension $9$ or the spin module of dimension $16$. Let $S$ be such a submodule. 
Then $S$ is a nondegenerate subspace of $V$, and so $V=S\oplus S^\perp$ with $S$ and $S^\perp$ both $M$-invariant. Write 
\[
v=x+y
\]
for some $x\in S$ and $y\in S^\perp$. Then $M_{\langle v\rangle}\leqslant M_{\langle x\rangle}\cap M_{\langle y\rangle}$ and $M_v=M_x\cap M_y$.

First assume that $S$ is the natural module of $M$. Then the action of $M$ on $S$ induces $\Omega(S)=\Omega_9(q)$ with kernel $2$.
Hence $(X_v)^S\leqslant(M_v)^S\leqslant(M_x)^S=\Omega(S)_x$, which together with~\eqref{EqnOmega02} implies that $X_v\neq X_{\langle v\rangle}$.
Since $Z_v$ has index $2$ in $Z_{\langle v\rangle}$, it follows that $X_v$ has index $2$ in $X_{\langle v\rangle}$. 
Thus we conclude from~\eqref{EqnOmega02} that $X_v=\mathrm{Spin}^-_8(q)$.

Next assume that $S$ is the spin module of $M$. Then $S^\perp$ has dimension $25-\dim(S)=9$.
Since $M=\mathrm{Spin}_9(q)=2.\Omega_9(q)$, the induced group of $M$ on $S^\perp$ is either trivial or equal to $\Omega_9(q)$.
For the latter case, replacing $S$ with $S^\perp$ in the previous paragraph gives $X_v=\mathrm{Spin}^-_8(q)$, as desired.
Suppose for the rest of the proof that the induced group of $M$ on $S^\perp$ is trivial. Then $M_{\langle y\rangle}=M$.
From~\eqref{EqnOmega02} we deduce that 
\[
M_{\langle x\rangle}\geqslant M_{\langle v\rangle}=X_{\langle v\rangle}=\mathrm{Spin}^-_8(q).2.
\]
If $x\neq0$, then the Proposition of~\cite[Appendix~3]{LPS1990} shows that $M_{\langle x\rangle}/2$ is one of the groups
\[
\Omega_7(q).2,\quad q^{6+4}{:}\SL_4(q).\tfrac{q-1}{2},\quad q^7.\G_2(q).\tfrac{q-1}{2},
\]
a contradiction.
Therefore, $x=0$. Hence $v=y$, and so 
\[
X_{\langle v\rangle}=M_{\langle v\rangle}=M_{\langle y\rangle}=M=\mathrm{Spin}_9(q),
\]
contradicting~\eqref{EqnOmega02}.
\end{proof}

\section{Sporadic cases of $(X,Y)$ in Table~\ref{TabOmega}}\label{SecOmega02}

In this section, we give the sporadic pairs $(X,Y)$ in Table~\ref{TabOmega}.
The subgroups $X$ in the following lemma is not $\Aut(Z)$-conjugate to those in Lemma~\ref{LemOmega04}.

\begin{lemma}\label{LemOmega11}
Let $Z=\Omega_7(3)$, let $M=\Sp_6(2)$ be a maximal subgroup of $Z$, and let $X=\Omega_5(3)<M$.
Then there is precisely one conjugacy class of maximal subgroups $Y$ of $Z$ isomorphic to $\G_2(3)$ such that $Z=XY$. 
For each such pair $(X,Y)$ we have $X\cap Y=\SL_2(3)$.
\end{lemma}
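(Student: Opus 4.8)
The plan is to work inside $M=\Sp_6(2)<Z=\Omega_7(3)$ and reduce the factorization $Z=XY$ to a factorization of $M$ together with a transitivity statement, in the spirit of the arguments in Lemmas~\ref{LemOmega03}--\ref{LemOmega11} above. First I would record the order arithmetic: since
\[
\frac{|Z|}{|X|}=\frac{|\Omega_7(3)|}{|\Omega_5(3)|}=3^5(3^6-1),
\]
a factorization $Z=XY$ with $Y\cong\G_2(3)$ forces $|X\cap Y|=|X||Y|/|Z|=|\Omega_5(3)|\,|\G_2(3)|/(3^5(3^6-1)\,|\Omega_5(3)|)=|\G_2(3)|/(3^5(3^6-1))=|\SL_2(3)|$. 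So the claimed value of $X\cap Y$ is automatic once $Z=XY$ is established, and conversely $Z=XY$ is equivalent to $|X\cap Y|\leqslant|\SL_2(3)|$, i.e.\ to $|X\cap Y|=|\SL_2(3)|$ since $|X\cap Y|\geqslant|X||Y|/|Z|$ always.

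Next I would pin down the subgroups involved. There is a unique conjugacy class of maximal subgroups $\Sp_6(2)$ in $\Omega_7(3)$ (from~\cite[Table~8.39]{BHR2013}), and inside $M=\Sp_6(2)$ the subgroup $X=\Omega_5(3)\cong\PSp_4(3)\cong\mathrm{U}_4(2)$ is a maximal subgroup, again essentially unique up to conjugacy; note this $X$ sits differently in $Z$ than the $\Omega_5(3)$ of Lemma~\ref{LemOmega04}, since here it does not lie in any $\N_1^\varepsilon[Z]$ but rather in $M=\Sp_6(2)$. The maximal subgroups $Y\cong\G_2(3)$ of $Z$ fall into the two classes fused by $\mathrm{O}_7(3)$ (see~\cite[Table~8.40]{BHR2013}); the assertion is that for exactly one of these classes one gets $Z=XY$, and then all members of that class work by Lemma~\ref{LemXia03}. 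By the maximal factorizations of $\Omega_7(3)$ in~\cite[Tables~1--4]{LPS1990}, a factorization $Z=M_1 Y$ with $Y\cong\G_2(3)$ requires $M_1\in\{\N_1^\varepsilon[Z],\Pa_1[Z],\Sp_6(2)\}$ up to conjugacy, so it is consistent that $M=\Sp_6(2)$ participates, and indeed $Z=MY$ holds (this is one of the maximal factorizations of $\Omega_7(3)$). The real content is to descend from $Z=MY$ to $Z=XY$.

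The key step is therefore: given $Z=MY$ with $M=\Sp_6(2)$ and $X=\Omega_5(3)<M$, determine for which $\G_2(3)$-class one has $M=X(M\cap Y)$, because by Lemma~\ref{LemXia01} (applied with the subgroup $M$ in place of a normal subgroup is not quite legitimate, so more precisely) one uses the standard fact that $Z=XY$ iff $Z=MY$ and $M=X(M\cap Y)$ — this holds because $XY\supseteq X(M\cap Y)Y\supseteq \cdots$; concretely, $Z=MY=X(M\cap Y)Y=XY$. So I would compute $M\cap Y$ for each class of $Y$. From the maximal factorization $Z=MY$ one gets $|M\cap Y|=|M||Y|/|Z|=|\Sp_6(2)|\,|\G_2(3)|/|\Omega_7(3)|$; evaluating, $M\cap Y$ is a group of order $|\Sp_6(2)|\,|\G_2(3)|/|\Omega_7(3)|$, which should come out to a subgroup isomorphic to $\G_2(2)$ or a closely related group inside $\Sp_6(2)$ — indeed $\G_2(2)<\Sp_6(2)$ is a natural candidate, of the right order. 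Then I must check whether $\Sp_6(2)=\Omega_5(3)\cdot\G_2(2)$ is a valid factorization of $\Sp_6(2)$; this is a known factorization (it appears in the factorization tables for $\Sp_6(2)$, e.g.\ via $\mathrm{U}_4(2)\cdot\G_2(2)=\Sp_6(2)$). Whether both $\G_2(3)$-classes intersect $M$ in a conjugate of $\G_2(2)$ that complements $X$, or only one does, is the crux: the two classes are swapped by an outer involution of $\Omega_7(3)$ which need not normalize $M$, so their intersections with a fixed $M$ can genuinely differ, and I expect exactly one to yield $M=X(M\cap Y)$.

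The main obstacle I anticipate is this last bookkeeping — verifying that precisely one of the two $\G_2(3)$-classes gives $M\cap Y$ lying in a complement position to $X$ in $M$, and excluding the other. Since $Z=\Omega_7(3)$ is small, the cleanest route is a \magma~\cite{BCP1997} computation: construct $Z$, a maximal $M\cong\Sp_6(2)$, the subgroup $X\cong\Omega_5(3)$ of $M$, and representatives $Y_1,Y_2$ of the two classes of $\G_2(3)$, then test $|XY_i|=|Z|$ directly; this also confirms $X\cap Y=\SL_2(3)$ for the good class and identifies the bad one. Alternatively, a character-theoretic argument using permutation characters on the cosets of $X$ and of $Y$ in $Z$ pins down $\mathbf{1}_X^Z\cdot\mathbf{1}_Y^Z$ and hence $|X\cap Y|$ on each class; but given the sporadic nature of the statement and that the paper already invokes \magma\ for analogous checks (cf.\ the Remark after Lemma~\ref{LemOmega08}), I would present the computational verification and simply read off $X\cap Y=\SL_2(3)$.
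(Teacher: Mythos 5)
Your final route --- constructing $Z=\Omega_7(3)$, $M\cong\Sp_6(2)$, $X\cong\Omega_5(3)<M$ and representatives $Y_1,Y_2$ of the two $\G_2(3)$-classes and testing $|XY_i|=|Z|$ in \magma\ --- is essentially the paper's own treatment: this lemma is one of the sporadic $\Omega_7(3)$ statements that the paper states without a written proof and settles by computation, and your order arithmetic ($Z=XY$ forcing $|X\cap Y|=|\SL_2(3)|$) and the reduction $Z=XY\Leftrightarrow Z=MY$ and $M=X(M\cap Y)$ are both sound. One caveat in your exploratory middle step: if $Z=MY$ then $|M\cap Y|=|\Sp_6(2)||\G_2(3)|/|\Omega_7(3)|=1344=|2^3.\PSL_2(7)|$ (consistent with Lemma~\ref{LemOmega16}), not $|\G_2(2)|=12096$, so the proposed factorization $\Sp_6(2)=\Omega_5(3)\cdot\G_2(2)$ is not the relevant one --- harmless here only because the \magma\ check, not that identification, carries the proof.
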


The factor pairs $(X,Y)$ in Rows~7--11 of Table~\ref{TabOmega} are constructed in Lemmas~\ref{LemOmega12}--\ref{LemOmega17} below.
They are verified by computation in \magma~\cite{BCP1997} except for the last two lemmas. 

\begin{lemma}\label{LemOmega12}
Let $Z=\Omega_7(3)$, and let $Y=\G_2(3)<Z$. Then $Z$ has precisely two (out of four) conjugacy classes of subgroups $X$ of the form $3^4{:}\Sy_5$ such that $Z=XY$. For each such pair $(X,Y)$ we have $X\cap Y=3^2$.
\end{lemma}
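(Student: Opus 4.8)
The plan is to reduce the statement to a finite computation in \magma~\cite{BCP1997}, following the same pattern as the preceding sporadic lemmas. First I would fix a concrete model of $Z=\Omega_7(3)$ inside $\mathrm{O}_7(3)$, say acting on the natural $7$-dimensional module over $\bbF_3$ with the standard form $\beta$. The group $Z$ has order $2^9\cdot3^9\cdot5\cdot7\cdot13$, which is small enough for direct machine computation. I would then locate a representative $Y=\G_2(3)<Z$; since by~\cite[Table~8.40]{BHR2013} there are exactly two $Z$-classes of such subgroups fused in $\mathrm{O}_7(3)$, it does not matter which representative is chosen for the purpose of counting classes of $X$ (the other choice merely conjugates everything by the outer element). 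Next I would enumerate the subgroups $X$ of $Z$ of the form $3^4{:}\Sy_5$. By~\cite[Table~8.39]{BHR2013} (or the relevant list of maximal subgroups of $\Omega_7(3)$), such $X$ arise as a subgroup of the parabolic $\Pa_4[Z]$ with unipotent radical $3^4$ and Levi complement containing $\Sy_5$; the assertion to be checked is that there are precisely four $Z$-classes of these.

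For each of the four classes of $X$, I would compute the product $XY$ — equivalently, check whether $|X||Y|/|X\cap Y|=|Z|$, i.e.\ whether $|X\cap Y|=|X||Y|/|Z|=|3^4{:}\Sy_5|\cdot|\G_2(3)|/|\Omega_7(3)|=3^2$. Concretely, in \magma{} one forms a right transversal or uses the built-in orbit–counting on the coset space, or simply computes $\#(X\cap Y^g)$ as $g$ ranges over double-coset representatives $X\backslash Z/Y$ and checks the factorization criterion $Z=XY$ directly via \texttt{\#(X meet Y) eq \#X*\#Y/\#Z}. The claim is that exactly two of the four classes satisfy this, and that for those two, $X\cap Y\cong3^2$. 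Because all four candidate subgroups $X$ have the same isomorphism type, whether $Z=XY$ holds depends on the actual embedding, which is exactly why the count is $2$ out of $4$ rather than all or none.

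The one point requiring slight care is the interaction with the outer automorphism: $\Aut(\Omega_7(3))=\Omega_7(3).2$, so I should confirm that the two "good" classes of $X$ are genuinely distinct under $Z$-conjugacy (not merged), and record whether they are fused by the outer involution — the latter is needed only for the bookkeeping in Theorem~\ref{ThmOmega} (the "$\alpha\in\Aut(L)$" clause), not for the factorization statement itself. I would also double-check the $\G_2(3)$-class issue: since the two classes of $\G_2(3)$ are swapped by $\mathrm{O}_7(3)\setminus\Omega_7(3)$, and conjugation by that outer element permutes the four classes of $3^4{:}\Sy_5$ among themselves, the count "$2$ out of $4$" is independent of the chosen $\G_2(3)$.

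The main obstacle is not mathematical depth but rather setting up the computation so that it is verifiable and reproducible: correctly identifying the four conjugacy classes of subgroups isomorphic to $3^4{:}\Sy_5$ (one must be sure the search is exhaustive — e.g.\ via \texttt{Subgroups} with the order constraint $|X|=2^3\cdot3^6\cdot5$, or by classifying complements to the radical in the relevant parabolic), and then reliably computing the double cosets $X\backslash Z/Y$. Everything else — the index arithmetic, the identification $X\cap Y\cong3^2$ — is routine once the representatives are in hand. Since $|Z|<10^{9}$ this is well within \magma{}'s range, so I expect no performance difficulty; the care is entirely in the exhaustiveness of the subgroup search.
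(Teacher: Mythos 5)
Your proposal matches the paper's treatment: for Lemma~\ref{LemOmega12} the paper gives no written argument beyond the statement that these sporadic factor pairs are verified by computation in \magma~\cite{BCP1997}, and that is exactly the computation you describe (fix a copy of $Y=\G_2(3)$, enumerate the four $Z$-classes of subgroups $3^4{:}\Sy_5$, and test the order criterion $|X\cap Y|=|X||Y|/|Z|=3^2$ for each, noting that the choice between the two $\mathrm{O}_7(3)$-fused classes of $\G_2(3)$ is immaterial). Before implementing it, fix two slips in your setup: $|3^4{:}\Sy_5|=2^3\cdot3^5\cdot5=9720$, not $2^3\cdot3^6\cdot5$ (and $|Z|=2^9\cdot3^9\cdot5\cdot7\cdot13\approx4.6\times10^9$), and $\Omega_7(3)$ has Lie rank $3$, so there is no parabolic $\Pa_4[Z]$ --- the exhaustive search for the four classes must be done directly (or inside $\Pa_1$, $\Pa_2$, $\Pa_3$ and the relevant maximal subgroups), not in a ``$\Pa_4$''.
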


\begin{lemma}\label{LemOmega13}
Let $Z=\Omega_7(3)$, let $X$ be a subgroup of $Z$ such that $X=3^5{:}2^4{:}\A_5$ or $3^4{:}\A_6$ (there is a unique conjugacy class of such subgroups in $Z$ in either case), and let $Y=\G_2(3)<Z$. Then $Z=XY$ with 
\[
X\cap Y=
\begin{cases}
\mathrm{ASL}_2(3)&\textup{if }X=3^5{:}2^4{:}\A_5\\
3_+^{1+2}&\textup{if }X=3^4{:}\A_6.
\end{cases}
\]
\end{lemma}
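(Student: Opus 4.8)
\textbf{Proof proposal for Lemma~\ref{LemOmega13}.}

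The statement asserts two things for each choice of $X$: first, that $Z=XY$, and second, the precise isomorphism type of $X\cap Y$. Since the lemma is declared to be verified by computation in \magma, the plan is essentially to record the output of such a computation, but organized so that the two assertions are cross-checked against each other by an order argument. First I would fix explicit matrix representations: realize $Z=\Omega_7(3)$ acting on its natural $7$-dimensional module over $\bbF_3$, take $Y=\G_2(3)<Z$ as one of the two conjugacy classes of such subgroups (they are interchanged by an outer automorphism, so the choice is immaterial for the existence of the factorization by Lemma~\ref{LemXia03}), and then locate the subgroups $X$ of the prescribed shapes. For $X=3^4{:}\A_6$ and $X=3^5{:}2^4{:}\A_5$ one checks in \magma\ that each arises from a unique conjugacy class — these are among the maximal subgroups of $Z$, or lie just below one, so they can be read off from the maximal subgroup data of $\Omega_7(3)$ together with the $\A_6$- and $\A_5$-structure of the Levi complement.

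The core step is the factorization itself. Using $|\Omega_7(3)|=|Z|$, $|\G_2(3)|=|Y|$, and the group orders $|3^5{:}2^4{:}\A_5|=3^5\cdot 2^4\cdot 60$ and $|3^4{:}\A_6|=3^4\cdot 360$, the identity $|Z|=|X|\,|Y|/|X\cap Y|$ forces $|X\cap Y|$ to equal $|\mathrm{ASL}_2(3)|=3^2\cdot|\SL_2(3)|=3^3\cdot 8=216$ in the first case and $|3_+^{1+2}|=27$ in the second. So it suffices to (i) compute $X\cap Y$ in \magma, (ii) confirm its order matches the value just derived — which is equivalent to $Z=XY$ by the order formula — and (iii) identify the isomorphism type, i.e.\ check that the intersection in case one is the affine group $\mathrm{ASL}_2(3)=3^2{:}\SL_2(3)$ (not merely a group of order $216$) and in case two is the extraspecial group $3_+^{1+2}$ of exponent $3$. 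Both identifications are immediate from the \magma\ structure description; the exponent-$3$ condition for $3_+^{1+2}$ is the only subtle point and is checked directly. One should also verify in \magma\ that for the other conjugacy classes of such subgroups (if more than one survives the uniqueness check for the $3^5{:}2^4{:}\A_5$ and $3^4{:}\A_6$ shapes) the factorization does or does not hold, so that the ``unique conjugacy class'' clause is justified; the uniqueness is part of the claimed input.

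The main obstacle is not conceptual but bookkeeping: one must be sure that the subgroups named $3^4{:}\Sy_5$ in Lemma~\ref{LemOmega12} and $3^5{:}2^4{:}\A_5$, $3^4{:}\A_6$ here are the correct \magma\ subgroup classes and are not confused with one another or with the $\N_1$-type subgroups $\Omega_5(3)$-related groups appearing in nearby lemmas; the shapes $3^4{:}\Sy_5$ and $3^4{:}\A_6$ both have a $3^4$ normal subgroup and similar orders, so careful tracking of the conjugacy class labels is essential. Beyond that, the proof is a routine \magma\ verification: construct $Z$, $Y$, and $X$, compute $X\cap Y$, compare $|X\cap Y|$ with $|X||Y|/|Z|$ to obtain $Z=XY$, and read off the structure of $X\cap Y$.
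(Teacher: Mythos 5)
Your proposal matches the paper's treatment: Lemma~\ref{LemOmega13} is one of the sporadic cases the paper verifies purely by computation in \magma\ (no written proof is given), and your plan — construct $Z$, $Y$ and the unique classes of $X$, compute $X\cap Y$, and confirm $Z=XY$ via $|X\cap Y|=|X||Y|/|Z|$ ($216$ and $27$ respectively) together with the structural identification of the intersection — is exactly that computational verification, correctly carried out.
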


\begin{lemma}\label{LemOmega14}
Let $Z=\Omega_7(3)$, let $X$ be a subgroup of $Z$ such that $X=3^3{:}\SL_3(3)$ (there is a unique conjugacy class of such subgroups in $Z$), and let $Y=\A_9$ or $\Sp_6(2)$. Then $Z=XY$ with 
\[
X\cap Y=
\begin{cases}
\Sy_3&\textup{if }X=\A_9\\
\GL_2(3)&\textup{if }X=\Sp_6(2).
\end{cases}
\]
\end{lemma}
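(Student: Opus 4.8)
The statement to prove is Lemma~\ref{LemOmega14}: with $Z=\Omega_7(3)$ and $X=3^3{:}\SL_3(3)$ (the unique class of such subgroups), for $Y=\A_9$ or $Y=\Sp_6(2)$ one has $Z=XY$ with $X\cap Y=\Sy_3$ or $\GL_2(3)$ respectively. Since the header explicitly says these lemmas "are verified by computation in \magma~\cite{BCP1997} except for the last two lemmas", and this is not among the last two, the honest plan is a computer-algebra verification; I will describe both the computational route and the arithmetic consistency check that makes the output believable.

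\medskip

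\noindent\textbf{Proof plan.} The plan is to realize $Z=\Omega_7(3)$ concretely in \magma{} (via \texttt{OmegaPlus}/\texttt{Omega} or as a matrix group over $\bbF_3$ preserving the standard symmetric form), locate the relevant maximal subgroups from the known maximal subgroup structure of $\Omega_7(3)$ --- here $\A_9$ and $\Sp_6(2)$ are maximal subgroups (the latter appearing as in Lemma~\ref{LemOmega11}), and $3^3{:}\SL_3(3)=\Pa_3[Z]^{(\infty)}$-type subgroup is a maximal parabolic-related subgroup --- and then:
first, confirm that $Z$ has a single conjugacy class of subgroups isomorphic to $3^3{:}\SL_3(3)$ (this is the parenthetical claim in the statement and can be read off \cite[Table~8.39]{BHR2013} or checked directly with \texttt{MaximalSubgroups} / \texttt{Subgroups} restricted to the right order $3^3\cdot|\SL_3(3)|=27\cdot 5616=151632$);
second, for each choice $Y\in\{\A_9,\Sp_6(2)\}$, compute $|X|\,|Y|/|Z|$ to predict $|X\cap Y|$, and check it equals $|\Sy_3|=6$ or $|\GL_2(3)|=48$ as claimed;
third, form $X\cap Y$ for an explicit pair of representatives and identify its isomorphism type, then verify $|XY|=|X|\,|Y|/|X\cap Y|=|Z|$, which by finiteness gives $Z=XY$.

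\medskip

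\noindent The arithmetic sanity check that underpins everything: $|\Omega_7(3)|=3^{9}(3^2-1)(3^4-1)(3^6-1)=4585351680$; $|3^3{:}\SL_3(3)|=3^3\cdot|\PSL_3(3)|=27\cdot 5616=151632$; $|\A_9|=181440$; $|\Sp_6(2)|=1451520$. Then $|X|\,|\A_9|/|Z| = 151632\cdot 181440/4585351680 = 6$ and $|X|\,|\Sp_6(2)|/|Z| = 151632\cdot 1451520/4585351680 = 48$, matching $|\Sy_3|$ and $|\GL_2(3)|$. So the divisibility is consistent with a factorization, and it only remains to see that the intersection of \emph{some} (equivalently, by Lemma~\ref{LemXia04}, any) pair of representatives actually attains this minimal size rather than being larger --- which is exactly what the \magma{} computation confirms, simultaneously pinning down the isomorphism type ($\Sy_3$ resp.\ $\GL_2(3)$) of the intersection.

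\medskip

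\noindent\textbf{Main obstacle.} The genuine content --- and the only step that is not pure bookkeeping --- is verifying that $X\cap Y$ has the predicted small order for the \emph{correct} conjugacy representatives: a priori $\A_9$ or $\Sp_6(2)$ has several conjugacy classes of subgroups of the relevant order and $X$ must meet $Y$ in precisely one of the right kind, so one cannot simply pick arbitrary representatives and conclude. Concretely one must either exhibit the specific embeddings (using the permutation/matrix representations where the intersection is transparent, e.g.\ $\Sy_3$ as a point-and-block stabilizer inside $\A_9$, or $\GL_2(3)$ inside $\Sp_6(2)$) or let \magma{} run over double coset representatives $X\backslash Z/Y$ and check that $1$ lies in the product $XY$, equivalently $[Z:X][Z:Y]=[Z:X\cap Y]$ holds for the representatives at hand. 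Everything else --- uniqueness of the class of $3^3{:}\SL_3(3)$, the order arithmetic above, and the final cardinality comparison $|XY|=|Z|$ --- is routine once the ambient copy of $\Omega_7(3)$ is set up.
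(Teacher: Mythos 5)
Your proposal takes essentially the same route as the paper: Lemma~\ref{LemOmega14} is one of the sporadic $\Omega_7(3)$ cases that the paper justifies solely by computation in \magma{} (no written proof is given), and your plan --- set up $\Omega_7(3)$ explicitly, take class representatives of $X=3^3{:}\SL_3(3)$ and $Y\in\{\A_9,\Sp_6(2)\}$, compute $X\cap Y$ and compare $|X|\,|Y|/|X\cap Y|$ with $|Z|$ --- together with the correct order arithmetic ($|X\cap Y|=6$ resp.\ $48$) is exactly that verification. One cosmetic slip: the order formula should read $|\Omega_7(3)|=\tfrac{1}{2}\,3^9(3^2-1)(3^4-1)(3^6-1)$, but the numerical value $4585351680$ you actually use is correct, so the check stands.
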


\begin{lemma}\label{LemOmega15}
Let $Z=\Omega_7(3)$, let $X=\N_1^+[Z]^{(\infty)}=\Omega_6^+(3)$, and let $Y$ be a subgroup of $Z$ such that $Y=\A_9$ or $\Sp_6(2)$.
Then $Z=XY$ with
\[
X\cap Y=
\begin{cases}
2\times\Sy_5&\textup{if }Y=\A_9\\
2^4.\Sy_5&\textup{if }Y=\Sp_6(2).
\end{cases}
\]
\end{lemma}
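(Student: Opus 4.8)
The plan is to establish the lemma by explicit computation in \magma, as with the preceding sporadic lemmas. First I would realize $Z=\Omega_7(3)$ in a form convenient for subgroup computations — either directly as the matrix group over $\bbF_3$ preserving the standard quadratic form on $V$, or as a permutation group of small degree (for instance the degree‑$378$ action on nonsingular $1$‑spaces of $+$‑type); note $|Z|<5\times10^9$, so this is entirely feasible.

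Next I would fix representatives of $X$ and $Y$ up to conjugacy. The subgroup $\N_1^+[Z]$ is the stabilizer in $Z$ of a nonsingular $1$‑space of $+$‑type, and there is a single $Z$‑class of these; then $X=\N_1^+[Z]^{(\infty)}\cong\Omega_6^+(3)$ is recovered as the last term of its derived series (which is perfect). For $Y$, both $\A_9$ and $\Sp_6(2)$ occur as maximal subgroups of $\Omega_7(3)$; from the list of maximal subgroups (see \cite{BHR2013} or the Atlas) there may be two $Z$‑classes of each, fused under $\Aut(Z)$. Since Theorem~\ref{ThmOmega} classifies factorizations only up to $\Aut(L)$‑conjugacy, it suffices to treat one representative of each $\Aut(Z)$‑class, and by Lemma~\ref{LemXia04} the isomorphism type of $X\cap Y$ does not depend on the choice of conjugate representatives within their classes.

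With the representatives fixed, I would compute $H=X\cap Y$ in \magma~\cite{BCP1997} and record both its order and its structure: in the case $Y=\A_9$ one gets $|H|=240$ with $H\cong2\times\Sy_5$, and in the case $Y=\Sp_6(2)$ one gets $|H|=1920$ with $H\cong2^4.\Sy_5$. The factorization $Z=XY$ then follows from the arithmetic identity
\[
\frac{|X|\,|Y|}{|X\cap Y|}=|Z|,
\]
which is routine to check in each case and forces $|XY|=|Z|$. Equivalently, and perhaps more transparently, $Z=XY$ is exactly the statement that the chosen $Y$ acts transitively on the coset space $[Z:X]$ — a set of size $756$, a double cover of the set of nonsingular $+$‑type $1$‑spaces of $V$ — so one may instead verify this transitivity directly and then read off $X\cap Y$ as a point stabilizer.

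The main obstacle is organizational rather than conceptual. One must ensure that the (possibly two) $Z$‑classes of $Y=\A_9$, respectively $Y=\Sp_6(2)$, are correctly handled: it is enough to verify the stated factorization and intersection for the appropriate class, with the remaining class obtained by an outer automorphism and hence covered by the ``$\alpha\in\Aut(L)$'' clause of Theorem~\ref{ThmOmega}; but one should keep the choices of $\N_1^+$‑representative and of $Y$‑representative mutually compatible, since an incompatible pair could a priori fail to factorize even when a compatible one does. A secondary point is to pin down the isomorphism type of $X\cap Y$ reliably — distinguishing $2\times\Sy_5$ from other groups of order $240$ and identifying the extension $2^4.\Sy_5$ — which \magma's structure‑recognition routines settle.
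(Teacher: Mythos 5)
Your proposal matches the paper's treatment: the paper states explicitly that Lemmas~\ref{LemOmega12}--\ref{LemOmega17}, including this one, are verified by computation in \magma~\cite{BCP1997}, with no further written argument. Your additional care about choosing compatible class representatives of $X$ and $Y$ and confirming the order count $|X|\,|Y|/|X\cap Y|=|Z|$ is exactly what such a computation amounts to, so the approach is essentially the same.
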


\begin{lemma}\label{LemOmega16}
Let $Z=\Omega_7(3)$, let $Y$ be a subgroup of $Z$ such that $Y=\A_9$ or $\Sp_6(2)$.
Then there is precisely one conjugacy class of maximal subgroups $X$ of $Z$ isomorphic to $\G_2(3)$ such that $Z=XY$. 
For each such pair $(X,Y)$ we have
\[
X\cap Y=
\begin{cases}
\PSL_2(7)&\textup{if }Y=\A_9\\
2^3.\PSL_2(7)&\textup{if }Y=\Sp_6(2).
\end{cases}
\]
\end{lemma}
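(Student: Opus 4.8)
The statement concerns the single fixed group $Z=\Omega_7(3)$, so the plan is to reduce it to a finite computation, which I would carry out in \magma~\cite{BCP1997}. The first step is a reduction to an order count: for any subgroups $X,Y\leqslant Z$ we have $|XY|=|X||Y|/|X\cap Y|$ and $|XY|\leqslant|Z|$, with equality precisely when $Z=XY$; hence $Z=XY$ if and only if $|X\cap Y|=|X||Y|/|Z|$. Using $|\Omega_7(3)|=4585351680$, $|\G_2(3)|=4245696$, $|\A_9|=181440$ and $|\Sp_6(2)|=1451520$, this target value equals $168=|\PSL_2(7)|$ when $Y=\A_9$ and $1344=|2^3.\PSL_2(7)|$ when $Y=\Sp_6(2)$. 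So it suffices to show that, for each choice of $Y$, exactly one $Z$-class of maximal subgroups isomorphic to $\G_2(3)$ meets $Y$ in a subgroup of the target order, and then to identify the isomorphism type of that intersection.

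Next I would sort out the relevant conjugacy classes. By~\cite[Table~8.40]{BHR2013} there are exactly two $Z$-classes of subgroups isomorphic to $\G_2(3)$, both maximal, and they are fused by the nontrivial outer automorphism of $Z$, namely the one induced by $\SO_7(3)=\Omega_7(3).2$; the groups $\A_9$ and $\Sp_6(2)$ are also maximal in $Z$, and I would record their $Z$-classes. By Lemma~\ref{LemXia04}, whether $Z=XY$ holds, and the isomorphism type of $X\cap Y$, depend only on the $Z$-classes of $X$ and of $Y$, so it is enough to test one representative of each $\G_2(3)$-class against one representative of each relevant $\A_9$- and $\Sp_6(2)$-class. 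Moreover, since that outer automorphism carries a factorization $Z=XY$ to $Z=X'Y'$ with $X'$ in the other $\G_2(3)$-class, the asserted uniqueness of the $\G_2(3)$-class for a fixed $Y$ forces $Y$ itself to lie in one of two $Z$-classes interchanged by the automorphism — a consistency point to be verified alongside the main computation.

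The computation itself is then routine. I would construct $Z=\Omega_7(3)$ in a convenient permutation or matrix representation together with class representatives of its maximal subgroups $\G_2(3)$, $\A_9$ and $\Sp_6(2)$ (e.g.\ via \texttt{MaximalSubgroups}, using \texttt{IsConjugate} to separate classes), and for each resulting ordered pair $(X,Y)$ compute $X\cap Y$ and its order; as $|Z|\approx4.6\times10^9$ this is easily within range. One expects to find that for each $Y$ precisely one of the two $\G_2(3)$-classes gives $|X\cap Y|$ equal to the target value from the order count, and that for those matching pairs a structural identification (or \texttt{IdentifyGroup}) yields $X\cap Y\cong\PSL_2(7)$ when $Y=\A_9$ and $X\cap Y\cong2^3.\PSL_2(7)$ (the non-split maximal subgroup of $\G_2(3)$ of that shape, which is the unique subgroup of $\G_2(3)$ of order $1344$) when $Y=\Sp_6(2)$. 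Combined with the first step, this proves the lemma.

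The main obstacle is not the arithmetic but the conjugacy-class bookkeeping: one must track correctly how the two classes of $\G_2(3)$ and the classes of $\A_9$ and $\Sp_6(2)$ behave under $\Aut(Z)=\Omega_7(3).2$, since it is exactly this that makes ``precisely one conjugacy class'' a meaningful and correct assertion rather than an artefact of which representative of $Y$ one happens to pick. Once $Z$ and the subgroups are realised concretely and the classes are pinned down, the intersection computations are mechanical.
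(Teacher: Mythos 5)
Your proposal is correct and takes essentially the same route as the paper, which offers no written argument for this lemma beyond the statement that Lemmas~\ref{LemOmega12}--\ref{LemOmega17} (except the last two) are verified by computation in \magma~\cite{BCP1997}; your order-count reduction, the target intersection orders $168$ and $1344$, and the conjugacy-class bookkeeping under the outer automorphism are exactly what such a verification amounts to. One minor correction: $\A_9$ is not maximal in $\Omega_7(3)$ (it is the derived subgroup of a maximal $\Sy_9$), so representatives of the classes of subgroups $Y\cong\A_9$ must be extracted from the maximal $\Sy_9$'s rather than read off directly from \texttt{MaximalSubgroups}, which does not affect the substance of the computation.
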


\begin{lemma}\label{LemOmega17}
Let $Z=\Omega_7(3)$, let $X$ be a subgroup of $Z$ as in the following table, where $c$ is the number of conjugacy classes of such subgroups in $Z$, and let $Y=\Pa_3[Z]=3^{3+3}{:}\SL_3(3)$. Then $Z=XY$ with $X\cap Y$ in the last row of the table.
\[
\begin{array}{|c|ccccc|}
\hline
X & 2^6{:}\A_7 & \Sy_8 & \A_9 & 2.\PSL_3(4) & \Sp_6(2) \\
\hline
c & 1 & 2 & 2 & 1 & 2 \\
\hline
X\cap Y & 6.\Sy_4 & \Sy_3\times\Sy_3 & 3^3{:}\Sy_3 & 3^2{:}4 & \SU_3(2){:}\Sy_3 \\
\hline
\end{array}
\]
\end{lemma}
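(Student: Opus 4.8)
The plan is to treat the five isomorphism types of $X$ one by one, in each case reducing the factorization $Z=XY$ with $Y=\Pa_3[Z]=3^{3+3}{:}\SL_3(3)$ to a factorization involving a maximal overgroup of $X$ in $Z$, and then reading off $X\cap Y$ from an order count together with a short structural argument.

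First I would record the reformulation. Here $Y$ is the stabiliser in $Z$ of a maximal totally singular $3$-subspace of $V$, so $[Z:Y]=(q+1)(q^2+1)(q^3+1)=1120$ for $q=3$, and hence $Z=XY$ is equivalent to $X$ being transitive on the set $\Omega$ of maximal totally singular $3$-subspaces of $V$; when this holds, $|X\cap Y|=|X|/1120$. Moreover, for any $\bar X$ with $X\leqslant\bar X\leqslant Z$, Dedekind's modular law gives that $Z=XY$ holds if and only if $Z=\bar XY$ and $\bar X=X(\bar X\cap Y)$ (forwards from $\bar X\cap XY=X(\bar X\cap Y)$, backwards from $(\bar X\cap Y)Y=Y$). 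Since $Y$ is a maximal subgroup of $Z$, any factorization $Z=XY$ therefore yields a maximal factorization $(\bar X,Y)$ of $\Omega_7(3)$ with $\bar X\geqslant X$ maximal, and these are listed in~\cite{LPS1990}. From that list the maximal subgroups $\bar X$ of $\Omega_7(3)$ with $\Omega_7(3)=\bar X\cdot\Pa_3[Z]$ include $\Sp_6(2)$, $\A_9$ and $2^6{:}\A_7$ (all maximal in $Z$), together with $\N_1^-[Z]=\SO_6^-(q)=\Omega_6^-(q).2$; for the last one I would give a direct geometric proof that $\Omega_7(q)=\N_1^-[\Omega_7(q)]\cdot\Pa_3[\Omega_7(q)]$ for all odd $q$, as follows. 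Every $U\in\Omega$ meets the minus-type hyperplane $v^\perp$ in a maximal totally singular $2$-subspace $W$ of $v^\perp$ (since $v^\perp$ has Witt index $2$); the group $\N_1^-[Z]$ is transitive on such $W$; and the stabiliser of $W$ in $\N_1^-[Z]$ is transitive on the $q+1$ members of $\Omega$ extending $W$. The last point holds because $(W^\perp\cap v^\perp)/W$ is anisotropic of dimension $2$, so in the nondegenerate section $W^\perp/W\cong\GO_3(q)$ the plane $\bar v^\perp$ is anisotropic, whence all $q+1$ singular points of $W^\perp/W$ lie off $\bar v^\perp$, and a copy of $\SO_2^-(q)$, cyclic of order $q+1$, sitting in the stabiliser of $W$ acts regularly on them.

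With the maximal factorizations in hand, the three cases $X\in\{\Sp_6(2),\A_9,2^6{:}\A_7\}$ are settled by taking $\bar X=X$; then $X\cap Y$ is a subgroup of $X$ of order $|X|/1120$, namely $\SU_3(2){:}\Sy_3$, $3^3{:}\Sy_3$ and $6.\Sy_4$ respectively, whose structure I would obtain either from the intersection data attached to the maximal factorization in~\cite{LPS1990} or by a short argument inside $X$ (for instance identifying $\SU_3(2){:}\Sy_3\cong\GaU_3(2)$ inside $\Sp_6(2)\cong\Omega_7(2)$). For $X=\Sy_8$, its maximal overgroup in $Z$ is $\bar X=\Sp_6(2)$ with $\Sy_8$ maximal in $\Sp_6(2)$, so the reduction requires the factorization $\Sp_6(2)=\Sy_8\cdot(\SU_3(2){:}\Sy_3)$; passing again to the maximal overgroup of $\SU_3(2){:}\Sy_3$ in $\Sp_6(2)$ reduces this to maximal factorizations of $\Sp_6(2)$ and of a smaller classical group, all present in~\cite{LPS1990}, and it yields $X\cap Y=\Sy_8\cap(\SU_3(2){:}\Sy_3)=\Sy_3\times\Sy_3$. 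For $X=2.\PSL_3(4)$, such a group cannot embed irreducibly into $\Omega_7(3)$ (that would require $-I\in\Omega_7(3)$, which is false), so it lies in a geometric maximal subgroup of $Z$, which must be of $\N_1^-$ type; the reduction then produces a factorization of $\Omega_6^-(3).2$ with a factor $2.\PSL_3(4)$ whose image in $\POm_6^-(3)\cong\PSU_4(3)$ is the maximal subgroup $\PSL_3(4)$, and this follows from the known factorizations of unitary groups in~\cite{LWX} and~\cite{LWX-Unitary}, giving $X\cap Y=3^2{:}4$. Lastly, the numbers $c$ of conjugacy classes of the relevant subgroups in $\Omega_7(3)$, and the claim that all of them factorise with $Y$, I would read off from the conjugacy of the overgroups $\bar X$ in $Z$ together with the subgroup structure of $\Omega_7(3)$ recorded in~\cite{BHR2013}.

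The main obstacle will be the two non-maximal cases, $X=\Sy_8$ and $X=2.\PSL_3(4)$: one must correctly locate the maximal overgroup in $\Omega_7(3)$, transport the intersection $\bar X\cap Y$ from the maximal case down to the smaller group, and then genuinely verify the induced factorizations of $\Sp_6(2)$ and of $\Omega_6^-(3).2$, including the precise structure of $X\cap Y$. The geometric verification that $\N_1^-[Z]\cdot\Pa_3[Z]$ is a factorization, and the conjugacy-class bookkeeping behind the column $c$ and the last row of the table, are secondary points that still require care.
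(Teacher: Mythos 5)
Your route is genuinely different from the paper's: for this lemma the paper offers no written argument at all, stating only that Rows~7--9 of Table~\ref{TabOmega} (Lemmas~\ref{LemOmega12}--\ref{LemOmega17}) ``are verified by computation in \magma''. Your plan replaces the computer check by a structural argument: identify $Z=XY$ with transitivity of $X$ on the $1120$ maximal totally singular $3$-subspaces, use Dedekind's law to reduce each case to a maximal factorization from~\cite{LPS1990}, and recover $X\cap Y$ as a subgroup of order $|X|/1120$. The skeleton is sound and the arithmetic is consistent throughout ($161280/1120=144=|6.\Sy_4|$, $40320/1120=36$, $181440/1120=162=|3^3{:}\Sy_3|$, $1451520/1120=1296=|\SU_3(2){:}\Sy_3|$), and your geometric proof that $\N_1^-[Z]$ is transitive on the maximal totally singular $3$-spaces is correct and would be a nice addition. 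What the theoretical approach buys is an explanation of \emph{why} these factorizations exist; what the \magma{} approach buys is certainty on exactly the points your sketch leaves open.

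Those open points are substantive enough that the proposal is not yet a proof. (i) The isomorphism types in the last row of the table are only matched by order: you still need to show, e.g., that the point stabilizer of order $162$ in $\A_9$ is $3^3{:}\Sy_3$ and that the one of order $1296$ in $\Sp_6(2)$ is $\SU_3(2){:}\Sy_3$, not merely groups of those orders. (ii) The column $c$ and, more importantly, the claim that \emph{every} class factorizes (both classes of $\Sp_6(2)$, both classes of $\A_9$ and of $\Sy_8$) is deferred to ``conjugacy of the overgroups''; since $\Omega_7(3)$ has two classes of each relevant maximal overgroup, fused only in $\Omega_7(3).2$ while $Y=\Pa_3[Z]$ is normalized by the outer automorphism class only up to conjugacy, this needs an explicit check. (iii) The two non-maximal cases bottom out in factorizations such as $\Sp_6(2)=\Sy_8\cdot(\SU_3(2){:}\Sy_3)$ and a factorization of $\Omega_6^-(3).2$ with factor $2.\PSL_3(4)$, neither of which is maximal on both sides, so they are not directly ``present in~\cite{LPS1990}'' and require their own arguments (or their own computations). (iv) Check whether $\A_9$ or $\Sy_9$ is the maximal subgroup of $\Omega_7(3)$; if it is $\Sy_9$, the case $X=\A_9$ needs the same two-step reduction as $\Sy_8$, including the verification that $\Sy_9\cap Y\not\leqslant\A_9$. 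None of these is likely to fail, but each is exactly the kind of detail the authors chose to settle by machine.
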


\begin{lemma}\label{LemOmega18}
Let $Z=\Omega(V)=\Omega_9(3)$ with $(m,q)=(4,3)$, and let $Y=Z_v$. Then $T$ has precisely two (out of four) conjugacy classes of subgroups $S$ of the form $2.\Sy_5$ such that $T=ST_{U_1,e_1+U_1}$, while each subgroup $S$ of $T$ of the form $8.\A_5$ or $2^{1+4}.\A_5$ satisfies $T=ST_{U_1,e_1+U_1}$. For each such $S$ let $X=R{:}S$. Then $Z=XY$ with $X=3^{6+4}{:}S$, $Y=\Omega_8^-(3)$ and
\[
X\cap Y=
\begin{cases}
3^{3+3}{:}3&\textup{if }S=2.\Sy_5\\
3^{3+3}{:}\Sy_3&\textup{if }S=8.\A_5\\
3^{3+3}{:}\SL_2(3)&\textup{if }S=2^{1+4}.\A_5.
\end{cases}
\]
\end{lemma}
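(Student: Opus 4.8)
The plan is to transport the entire question into the Levi factor $T=\SL_4(3)=\SL(U)$ of the parabolic $M=R{:}T$ by means of Lemma~\ref{LemOmega01}, and then to settle the residual assertions — which subgroups $S$ occur and what $S_{U_1,e_1+U_1}$ is — by a computation inside that comparatively small group. Put $K=Z_v=Y$ throughout. Since $\lambda$ is a nonsquare, $v$ is nonsingular and $v^\perp$ is an $8$-dimensional space of minus type, so $Y=\Omega(v^\perp)=\Omega_8^-(3)$; and $X=R{:}S=3^{6+4}{:}S$ is immediate from the definitions of $R$ and $X$.

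First I would carry out the reduction. By Lemma~\ref{LemOmega01}(b), $R\cap K=3^{3+3}$ is a special group of order $3^6$, and by Lemma~\ref{LemOmega01}(c), $X\cap Y=(R\cap K).S_{U_1,e_1+U_1}$, where $S_{U_1,e_1+U_1}=S\cap\SL(U)_{U_1,e_1+U_1}$ is the stabiliser in $S$ of the hyperplane $U_1$ together with the affine hyperplane $e_1+U_1$. Hence
\[
\frac{|X|}{|X\cap Y|}=\frac{|R|\,|S|}{|R\cap K|\,|S_{U_1,e_1+U_1}|}=q^m\cdot\frac{|S|}{|S_{U_1,e_1+U_1}|}=3^4\cdot\frac{|S|}{|S_{U_1,e_1+U_1}|},
\]
whereas $|Z|/|Y|=|\Omega_9(3)|/|\Omega_8^-(3)|=q^m(q^m-1)=3^4\cdot 80$. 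Since $XY\subseteq Z$ automatically, it follows that $Z=XY$ precisely when $|S|/|S_{U_1,e_1+U_1}|=80$. Now $T=\SL_4(3)$ acts on the set $\Theta$ of the $80$ affine hyperplanes of $U$ that miss the origin: the action is transitive, because $\SL_4(3)$ is transitive on hyperplanes and the stabiliser of $U_1$ contains $\mathrm{diag}(-1,-1,1,1)$, which interchanges the two nonzero cosets of $U_1$; and the point stabiliser is $\SL(U)_{U_1,e_1+U_1}=3^3{:}\SL_3(3)=T_{U_1,e_1+U_1}$, of index $|\SL_4(3)|/|3^3{:}\SL_3(3)|=80$. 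Therefore, for $S\leqslant T$, the equality $|S|/|S_{U_1,e_1+U_1}|=80$ is equivalent to $S$ being transitive on $\Theta$, that is, to $T=ST_{U_1,e_1+U_1}$. In summary, $Z=XY$ if and only if $S$ acts transitively on $\Theta$, and when it does, $X\cap Y=3^{3+3}.S_{U_1,e_1+U_1}$ with $S_{U_1,e_1+U_1}$ the stabiliser in $S$ of a point of $\Theta$.

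It then remains to examine, inside $T=\SL_4(3)$, the subgroups $S$ of the three isomorphism types listed: to determine their conjugacy classes, to decide which classes act transitively on $\Theta$, and, for the transitive ones, to identify $S_{U_1,e_1+U_1}$ and confirm that the extension $3^{3+3}.S_{U_1,e_1+U_1}$ splits. This is where a machine computation is invoked, and it is also why this lemma is not among those checked directly in the ambient group by \magma~\cite{BCP1997}: the group $\Omega_9(3)$ is far too large to compute in, so one must perform the reduction above first, after which everything takes place in $\SL_4(3)$ (of order about $1.2\times 10^7$), where the check is routine. I expect the outcome to be: $T$ has exactly four conjugacy classes of subgroups isomorphic to $2.\Sy_5$, of which exactly two are transitive on $\Theta$, with point stabiliser $C_3$; every subgroup of $T$ isomorphic to $8.\A_5$ is transitive on $\Theta$, with point stabiliser $\Sy_3$; and every subgroup of $T$ isomorphic to $2^{1+4}.\A_5$ is transitive on $\Theta$, with point stabiliser $\SL_2(3)$. (These embeddings could also be traced by hand along the chain $\SL_2(5)\leqslant\SL_2(9)\leqslant\Sp_4(3)\leqslant\SL_4(3)$ and inside the normaliser $2^{1+4}.\Sp_4(2)$ of an extraspecial subgroup of $\GL_4(3)$, but the computation is the cleanest route.)

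Combining the two parts: for each $S$ of one of the three types that is transitive on $\Theta$ we get $|X|\,|Y|=|X\cap Y|\,|Z|$ and hence $Z=XY$ (as $XY\subseteq Z$), with $X=3^{6+4}{:}S$, $Y=\Omega_8^-(3)$ and $X\cap Y=3^{3+3}{:}S_{U_1,e_1+U_1}$ equal to $3^{3+3}{:}3$, $3^{3+3}{:}\Sy_3$ or $3^{3+3}{:}\SL_2(3)$ according as $S=2.\Sy_5$, $8.\A_5$ or $2^{1+4}.\A_5$; while the other two classes of $2.\Sy_5$ yield no factorization. The main obstacle will be precisely the third step: because $\Omega_9(3)$ is computationally intractable, nothing can be verified by brute force in the ambient group and one is forced to go through the group-theoretic reduction of Lemma~\ref{LemOmega01} beforehand, with a minor secondary point being the confirmation that the extensions $3^{3+3}.S_{U_1,e_1+U_1}$ are split.
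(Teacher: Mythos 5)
Your proposal is correct and follows essentially the same route as the paper: both reduce via Lemma~\ref{LemOmega01} to computing $S_{U_1,e_1+U_1}$ inside the Levi factor $T=\SL_4(3)$ and then conclude $Z=XY$ by the order count $|X|/|X\cap Y|=3^4(3^4-1)=|Z|/|Y|$. The only difference is that the paper cites \cite[Lemma~5.4]{LWX-Linear} for the classification of the classes of $S$ with $T=ST_{U_1,e_1+U_1}$ and the identification of the stabilizers, whereas you propose to verify this by a direct computation in $\SL_4(3)$ — which is in substance the same step.
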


\begin{proof} 
It is clear that $X=3^{6+4}{:}S$ and $Y=\Omega_8^-(3)$. 
The statement on $T=ST_{U_1,e_1+U_1}$ follows from~\cite[Lemma~5.4]{LWX-Linear}, and for each such $S$ we obtain
\[
S_{U_1,e_1+U_1}=S\cap T_{U_1,e_1+U_1}=
\begin{cases}
3&\textup{if }S=2.\Sy_5\\
\Sy_3&\textup{if }S=8.\A_5\\
\SL_2(3)&\textup{if }S=2^{1+4}.\A_5.
\end{cases}
\]
From Lemma~\ref{LemOmega01} we obtain $X\cap Y=3^{3+3}.S_{U_1,e_1+U_1}$. Thus the conclusion on $X\cap Y$ holds, and so
\[
\frac{|X|}{|X\cap Y|}=\frac{|3^{6+4}{:}S|}{|3^{3+3}.S_{U_1,e_1+U_1}|}=3^4(3^4-1)=\frac{|\Omega_9(3)|}{|\Omega_8^-(3)|}=\frac{|Z|}{|Y|}.
\]
Hence $Z=XY$.
\end{proof}

\begin{lemma}\label{LemOmega19}
Let $Z=\Omega(V)=\Omega_{13}(3)$ with $(m,q)=(6,3)$, let $X=R{:}S$ with $S=\SL_2(13)<T$, and let $Y=Z_v$. 
Then $Z=XY$ with $X=3^{15+6}{:}\SL_2(13)$, $Y=\Omega_{12}^-(3)$ and $X\cap Y=3^{10+5}.3$.
\end{lemma}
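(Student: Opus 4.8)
The plan is to reduce the statement to Lemma~\ref{LemOmega01} together with the fact that $\SL_2(13)\le\SL_6(3)$ is a transitive linear group, and then to finish with an order count. First, the identifications are immediate: by definition $R=q^{m(m-1)/2}.q^m=3^{15+6}$ and by hypothesis $S=\SL_2(13)\le T=\SL_6(3)$, so $X=R{:}S=3^{15+6}{:}\SL_2(13)$; and since $v=e_1+\lambda f_1$ with $\lambda\in\bbF_q^\boxtimes$, the space $v^\perp$ is a nondegenerate $12$-dimensional orthogonal space of minus type (as set up in the Notation section), whence $Y=Z_v=\Omega_{12}^-(3)$.

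Next I would apply Lemma~\ref{LemOmega01} with $G=Z$, $M=R{:}T$, $K=Y=Z_v$ and $H=X=R{:}S$. Part~(b) gives that the kernel $R\cap K$ of $M\cap K$ acting on $U$ is $q^{(m-1)(m-2)/2}.q^{m-1}=3^{10+5}$, and part~(c) gives $X\cap Y=(R\cap K).S_{U_1,e_1+U_1}=3^{10+5}.S_{U_1,e_1+U_1}$. Thus the whole lemma comes down to showing $|S_{U_1,e_1+U_1}|=3$ and then verifying that $X\cap Y$ has index $|Z|/|Y|$ in $X$.

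The heart of the matter is the computation of $S_{U_1,e_1+U_1}$. As in the proof of Lemma~\ref{LemOmega01}, $\SL(U)_{U_1,e_1+U_1}=q^{m-1}{:}\SL_{m-1}(q)$ is exactly the stabilizer in $\SL(U)=\SL_6(3)$ of the nonzero functional $\phi\in U^*$ with $\ker\phi=U_1$ and $\phi(e_1)=1$; equivalently, it is a point stabilizer for the action of $\SL_6(3)$ on the $q^m-1=728$ nonzero vectors of the dual module $U^*$. Hence $S_{U_1,e_1+U_1}$ is the stabilizer of a nonzero vector of $U^*$ in $S=\SL_2(13)$. Now $\SL_2(13)\le\GL_6(3)$ is one of the sporadic transitive linear groups in Hering's theorem: it is transitive on the nonzero vectors of its natural $6$-dimensional module, and hence also on the nonzero vectors of $U^*$ (which is again a faithful irreducible $\SL_2(13)$-module, so again occurs in Hering's list — indeed the module is self-dual). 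This is the main obstacle, and I would quote it, together with $|S_{U_1,e_1+U_1}|=|\SL_2(13)|/728=2184/728=3$, from \cite{LWX-Linear} (parallel to the use of \cite[Lemma~5.4]{LWX-Linear} in the proof of Lemma~\ref{LemOmega18}). This yields $X\cap Y=3^{10+5}.3$.

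Finally, the order count:
\[
\frac{|X|}{|X\cap Y|}=\frac{|3^{15+6}{:}\SL_2(13)|}{|3^{10+5}.3|}=\frac{3^{21}\cdot2184}{3^{16}}=3^5\cdot2184=3^6(3^6-1)=\frac{|\Omega_{13}(3)|}{|\Omega_{12}^-(3)|}=\frac{|Z|}{|Y|},
\]
using $|\Omega_{2m+1}(q)|/|\Omega_{2m}^-(q)|=q^m(q^m-1)$ as in Lemma~\ref{LemOmega02}. Therefore $Z=XY$, completing the proof.
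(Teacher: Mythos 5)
Your proposal is correct and follows essentially the same route as the paper: identify $X$ and $Y$, apply Lemma~\ref{LemOmega01} to get $X\cap Y=3^{10+5}.S_{U_1,e_1+U_1}$, establish $S_{U_1,e_1+U_1}=3$, and conclude by the order count. The paper simply cites \cite[Lemma~5.5]{LWX-Linear} for $S_{U_1,e_1+U_1}=3$, whereas you additionally sketch the underlying reason (transitivity of $\SL_2(13)\leqslant\GL_6(3)$ on the $728$ nonzero vectors of the self-dual module), which is a sound justification of the same fact.
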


\begin{proof} 
It is clear that $X=3^{15+6}{:}\SL_2(13)$ and $Y=\Omega_{12}^-(3)$.
From Lemma~\ref{LemOmega01} we obtain $X\cap Y=3^{10+5}.S_{U_1,e_1+U_1}$, and by~\cite[Lemma~5.5]{LWX-Linear} we have $S_{U_1,e_1+U_1}=3$.
Hence $X\cap Y=3^{10+5}.3$, and so
\[
\frac{|X|}{|X\cap Y|}=\frac{|3^{15+6}{:}\SL_2(13)|}{|3^{10+5}.3|}=3^6(3^6-1)=\frac{|\Omega_{13}(3)|}{|\Omega_{12}^-(3)|}=\frac{|Z|}{|Y|},
\]
which implies that $Z=XY$.
\end{proof}

\section{Proof of Theorem~\ref{ThmOmega}}

The following result will be needed.

\begin{lemma}\label{LemOmega20}
Let $A=\GO(V)_{\langle e_1\rangle}=\Pa_1[\GO(V)]$ with $m=3$ and $q\geqslant5$. Then $A$ has a unique conjugacy class of subgroups isomorphic to $\Omega_4^-(q)$.
\end{lemma}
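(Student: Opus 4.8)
The statement to prove is Lemma~\ref{LemOmega20}: the parabolic subgroup $A=\Pa_1[\GO_7(q)]$ (with $q\geqslant5$ odd) has a unique conjugacy class of subgroups isomorphic to $\Omega_4^-(q)$. The natural approach is to pass to the Levi decomposition $A=R{:}(\GL_1(q)\times\GO_5(q))$, where $R=q^5$ is the unipotent radical (the kernel of $A$ acting on $\langle e_1\rangle^\perp/\langle e_1\rangle$), and analyze where a copy of $\Omega_4^-(q)$ can sit relative to this normal series. Since $\Omega_4^-(q)\cong\PSL_2(q^2)$ (or a related quasisimple group — one should be careful about the exact isomorphism type, but in any case it is perfect and nonabelian) has no nontrivial normal $p$-subgroup and $|\Omega_4^-(q)|=q^2(q^2+1)(q^2-1)$ is coprime to... no, wait, $p\mid q^2-1$ is false but $p\mid q^4$ obviously; the point is $\Omega_4^-(q)$ has order $q^2(q-1)(q+1)(q^2+1)$, and its image in the Levi quotient $A/R$ must be a quotient of $\Omega_4^-(q)$, hence (since $\Omega_4^-(q)$ is perfect modulo its small center) either trivial or all of $\Omega_4^-(q)$ up to center.

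First I would dispose of the case where $X\cong\Omega_4^-(q)$ lies inside $R$: impossible since $R$ is a $p$-group and $X$ is not. Next, consider the projection $\bar X$ of $X$ to the Levi factor $\GL_1(q)\times\GO_5(q)$; since $\Omega_4^-(q)$ is perfect, $\bar X\leqslant\Omega_5(q)$, so $\bar X$ is a perfect subgroup of $\Omega_5(q)\cong\PSp_4(q)$ of order $\tfrac{1}{\gcd}q^2(q-1)(q+1)(q^2+1)$; by the subgroup structure of $\PSp_4(q)$ (e.g.~\cite[Table~8.12]{BHR2013}) the only such subgroup is $\Omega_4^-(q)\cong\Sp_2(q^2).2\cap\Omega$ embedded as the stabilizer of a minus-type... more precisely, $\Omega_4^-(q)$ is the derived subgroup of a $\GO_4^-(q)$-subgroup of $\GO_5(q)$ (a nondegenerate hyperplane stabilizer of minus type), and all such are $\GO_5(q)$-conjugate, hence $A$-conjugate. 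So $X$ maps isomorphically onto a fixed such subgroup $\bar X_0$ of the Levi. Then $X$ is a complement to $R\cap(R\bar X_0)$... that is, writing $N=R{:}\bar X_0$ (the full preimage), $X$ is a complement to $R$ in $N$, and two complements are $A$-conjugate iff they are $N$-conjugate, which is controlled by $H^1(\bar X_0,R)$ where $R$ is viewed as an $\bbF_q[\bar X_0]$-module (or a filtered version thereof, since $R$ is nonabelian of class $2$).

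The heart of the argument, and the step I expect to be the main obstacle, is showing that all complements to $R$ in $N=R{:}\Omega_4^-(q)$ are conjugate. Here one uses that $R=q^5$ has a filtration as $\bar X_0$-module: the derived subgroup $R'\leqslant Z(R)$ has order $q$ (the "center direction" $e_1$ together with the $\GO_5$-action gives $R/R'\cong$ the natural $5$-dimensional $\GO_5(q)$-module restricted to $\Omega_4^-(q)$, and $R'$ is the trivial module $q^1$). Restricted to $\Omega_4^-(q)$, the natural $\Omega_5(q)$-module decomposes as (natural $4$-dim minus-type module) $\oplus$ (trivial $1$-dim); so $R/R'$ is, as an $\bbF_q[\Omega_4^-(q)]$-module, the sum of an irreducible $4$-dimensional module and possibly trivial pieces. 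One then computes $H^1(\Omega_4^-(q),-)$ on each composition factor: for the irreducible $4$-dimensional module $H^1$ vanishes (coprimality of the relevant characters, or a direct Lang-style argument — since $\Omega_4^-(q)$ acting on its natural module in cross-characteristic-free defining characteristic, the first cohomology is well-documented to vanish for $q\geqslant5$), and for the trivial $1$-dimensional factors $H^1(\Omega_4^-(q),\bbF_q)=\Hom(\Omega_4^-(q),\bbF_q^+)=0$ as $\Omega_4^-(q)$ is perfect. A short-exact-sequence / inflation-restriction chase up the filtration $R'\trianglelefteq R$ then gives $H^1(\Omega_4^-(q),R)=0$ in the appropriate non-abelian sense (complements are conjugate), completing the proof. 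The condition $q\geqslant5$ is exactly what is needed to rule out the small-$q$ exceptions in the cohomology of $\SL_2$-type groups on their natural modules (and to ensure $\Omega_4^-(q)$ is quasisimple with the expected module structure); I would cite~\cite[Table~4.6]{BHR2013} or the references therein, together with~\cite{LWX-Linear} where analogous complement-conjugacy computations for these parabolics appear, rather than redoing the cohomology by hand.
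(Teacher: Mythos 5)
Your strategy is genuinely different from the paper's after the common first step. Both arguments quotient $A$ by the kernel of its action on $\langle e_1\rangle^\perp/\langle e_1\rangle$ and use the fact that $\GO_5(q)$ has a single conjugacy class of subgroups $\Omega_4^-(q)$ to reduce to subgroups lying over the standard copy in the Levi. From there the paper stays completely elementary: it picks the $p'$-maximal subgroup $M=(\Omega_2^+(q)\times\Omega_2^-(q)).[4]$ of $H$, uses Maschke's theorem and Witt's lemma to force $M$ to stabilize the standard nondegenerate minus-type $4$-space $W_1$, and then, by an explicit computation with lifts $r,s$ of two concrete Levi elements (exploiting that $\Omega_4^-(q)\cong\PSL_2(q^2)$ has no element of order divisible by $2p$), produces one further element of $H$ stabilizing $W_1$; maximality of $M$ in $H$ then forces $H$ to stabilize $W_1$ and hence to coincide with $\Omega(V)_{e_1,f_1,w}$. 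You instead view $H$ as a complement to the unipotent radical and reduce conjugacy of complements to vanishing of first cohomology. Your route is shorter and explains conceptually where $q\geqslant5$ enters (the relevant $H^1$ is nonzero exactly at $q=3$, where $H^1(\Omega_4^-(3),\bbF_3^4)=H^1(\A_6,\bbF_3^4)\neq0$), whereas the paper trades this for explicit linear algebra and needs no modular cohomology input at all.

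Two points need fixing. First, your description of the radical is wrong: for $\Pa_1$ of an odd-dimensional orthogonal group the unipotent radical is elementary abelian of order $q^{2m-1}$ (here $q^5$), isomorphic as a module for the Levi to the natural $5$-dimensional $\GO_5(q)$-module; it is not a special group of type $q^{1+4}$, and your claim that $|R'|=q$ while $R/R'$ is $5$-dimensional is internally inconsistent. The slip is harmless --- it removes your filtration step, and the factors on which you compute $H^1$ (the natural minus-type $4$-space and trivial modules) are exactly the right ones. Second, the decisive input, $H^1(\Omega_4^-(q),V_4)=0$ for odd $q\geqslant5$, is true, but the sources you name do not establish it: \cite{BHR2013} contains no such cohomology table, and \cite{LWX-Linear} does not prove this vanishing either. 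It follows, for instance, from the determination of $H^1(\SL_2(q^2),L(\lambda))$ by Cline, Parshall and Scott, since $V_4\otimes_{\bbF_q}\bbF_{q^2}\cong L(1)\otimes L(1)^{[q]}$ and the weight $1+q$ is exceptional only when $q=3$. Until a correct reference (or a short direct computation) replaces the one you give, the crux of your proof is unsupported, even though the overall plan is sound.
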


\begin{proof}
Let $H$ be a subgroup of $A$ isomorphic to $\Omega_4^-(q)$, let $N$ be the kernel of $A$ acting on $\langle e_1\rangle^\perp/\langle e_1\rangle=\langle e_1,e_2,f_2,e_3,f_3,d\rangle/\langle e_1\rangle$, and let $\overline{\phantom{x}}\colon A\to A/N$ be the quotient modulo $N$. 
Then $N=q^5$, $\overline{A}=\GO_5(q)$, and $\overline{H}\cong H\cong\Omega_4^-(q)$.
Since $\overline{A}$ has a unique conjugacy class of subgroups isomorphic to $\Omega_4^-(q)$ (refer to~\cite[Table~8.22]{BHR2013}), we conclude that $\overline{H}$ stabilizes some nondegenerate minus type $4$-subspace, say $I$, of $e_1^\perp/\langle e_1\rangle$. Write
\[
u=e_3+\lambda f_3,\ \ w=e_3-\lambda f_3,\ \ W_1=\langle e_2,f_2,u,d\rangle. 
\]
Then $W_1$ is a nondegenerate minus type $4$-subspace of $V$, and so $W_1+\langle e_1\rangle$ is a nondegenerate minus type $4$-subspace of $e_1^\perp/\langle e_1\rangle$. By Witt’s lemma, there exists $x\in A$ such that 
\[
I^{\overline{x}}=W_1+\langle e_1\rangle.
\] 
Hence $\overline{H}^{\overline{x}}$ stabilizes $W_1+\langle e_1\rangle$, and so $H^x$ stabilizes $\langle e_1,e_2,f_2,u,d\rangle$. 

Let $M=(\Omega_2^+(q)\times\Omega_2^-(q)).[4]$ be a maximal subgroup of $H^x$. Note that $|M|$ is coprime to $p$, and $M\leqslant H^x$ stabilizes $\langle e_1,e_2,f_2,u,d\rangle$ and $\langle e_1\rangle$. By Maschke's theorem, there exists some $M$-invariant subspace $W_2$ of $\langle e_1,e_2,f_2,u,d\rangle$ such that 
\[
\langle e_1,e_2,f_2,u,d\rangle=\langle e_1\rangle\oplus W_2.
\]
If $W_2$ is not a nondegenerate minus type $4$-subspace of $V$, then $A_{W_2}$ has no subgroup isomorphic to $(\Omega_2^+(q)\times\Omega_2^-(q)).[4]$, contradicting $M\leqslant A_{W_2}$. Thus $W_2$ is a nondegenerate minus type $4$-subspace of $V$. Then by Witt’s lemma, there exists $y\in\GO(V)$ such that
\[
\langle e_1\rangle^y=\langle e_1\rangle\ \text{ and }\ W_2^y=W_1.
\]
It follows that $y\in A$, and $M^y$ stabilizes $W_1$. 
Since $\langle e_1,e_2,f_2,u,d\rangle$ is stabilized by both $H^x$ and $y$, it is stabilized by the subgroup $H^{xy}$ of $A$. Let 
\[
J=\langle e_1,e_2,f_2,u,d\rangle/\langle e_1\rangle. 
\]
Then $\overline{H^{xy}}\leqslant\overline{A}_J$, which together with $\overline{H^{xy}}\cong\Omega_4^-(q)\cong(\overline{A}_J)^{(\infty)}$ yields 
\[
\overline{H^{xy}}=(\overline{A}_J)^{(\infty)}. 
\]
Since $H^{xy}=(H^{xy})'\leqslant A'$, we see that $H^{xy}$ fixes $e_1$.

Let $\rho$ and $\sigma$ be the linear transformations on $V$ satisfying
\begin{align*}
\rho&\colon f_2\mapsto f_2-d-\tfrac{1}{2}e_2,\ d\mapsto d+e_2\\
\sigma&\colon e_2\mapsto-e_2,\ f_2\mapsto-f_2,\ e_3\mapsto-e_3,\ f_3\mapsto-f_3
\end{align*}
and fixing the other vectors in the basis $e_1,f_1,e_2,f_2,e_3,f_3,d$. 
It is straightforward to verify that $\rho$ and $\sigma$ are elements in $A$ stabilizing $J$. Since $|\rho|=p$ and $-1\in\Omega(\langle e_2,f_2,e_3,f_3\rangle)$, we thus obtain $\overline{\rho},\overline{\sigma}\in(\overline{A}_J)^{(\infty)}=\overline{H^{xy}}$. Accordingly, there exist $r$ and $s$ in $H^{xy}$ such that $\overline{r}=\overline{\rho}$ and $\overline{s}=\overline{\sigma}$. This means that $r$ and $s$ satisfy
\begin{align*}
r\colon&e_1\mapsto e_1,\ f_1\mapsto f_1+a_1e_1,\ e_2\mapsto e_2+a_2e_1,\ f_2\mapsto f_2-d-\tfrac{1}{2}e_2+a_3e_1,\\
&e_3\mapsto e_3+a_4e_1,\ f_3\mapsto f_3+a_5e_1,\ d\mapsto d+e_2+a_6e_1\\
s\colon&e_1\mapsto e_1,\ f_1\mapsto f_1+b_1e_1,\ e_2\mapsto-e_2+b_2e_1,\ f_2\mapsto-f_2+b_3e_1,\\
&e_3\mapsto-e_3+b_4e_1,\ f_3\mapsto-f_3+b_5e_1,\ d\mapsto d+b_6e_1
\end{align*}
for some $a_1,\dots,a_6,b_1,\dots,b_6\in\bbF_q$. Since $\overline{s}=\overline{\sigma}$ is an involution and $H^{xy}\cong\overline{H^{xy}}$, we derive that $s$ is an involution. In particular, $s^2$ fixes $f_1$, which implies $b_1=0$. Consequently, 
\begin{equation}\label{EqnOmega06}
f_1^{rs}=f_1+a_1e_1.
\end{equation}
Moreover, since $\overline{rs}$ maps $e_2+\langle e_1\rangle$ to $-e_2+\langle e_1\rangle$, we derive that $|rs|$ is even. This together with~\eqref{EqnOmega06} implies that either $a_1=0$, or $|rs|$ is divisible by $2p$. As $H^{xy}\cong\Omega_4^-(q)\cong\PSL_2(q^2)$ has no element of order divisible by $2p$, it follows that $a_1=0$. Then from 
\[
\beta(f_1^r,z^r)=\beta(f_1,z)=0\ \text{ for }\ z\in\{e_2,f_2,e_3,f_3,d\}
\]
we deduce that $a_2=a_3=a_4=a_5=a_6=0$. Hence $r=\rho$ stabilizes $W_1$. Since $|r|=|\rho|=p$ does not divide $|M^y|$, we have $r\notin M^y$, and so $\langle M^y,r\rangle=H^{xy}$ as $M^y$ is a maximal subgroup of $H^{xy}$. Then as $M^y$ and $r$ both stabilize $W_1$, we conclude that $H^{xy}$ stabilizes $W_1$, and so 
\[
H^{xy}<\mathrm{O}(\langle e_1,f_1,w\rangle)\times\mathrm{O}(W_1)=\mathrm{O}_3(q)\times\mathrm{O}_4^-(q).
\]
Since $H^{xy}\cong\Omega_4^-(q)$, it follows that $H^{xy}=\Omega(V)_{e_1,f_1,w}<A$. Thus $A$ has a unique conjugacy class of subgroups isomorphic to $\Omega_4^-(q)$.
\end{proof}

Let $G$ be an almost simple group with socle $L=\Omega_{2m+1}(q)$, and let $H$ and $K$ be nonsolvable subgroups of $G$ not containing $L$.
In Subsections~\ref{SecOmega01} and~\ref{SecOmega02} it is shown that all pairs $(X,Y)$ in Table~\ref{TabOmega} are factor pairs of $L$.
Hence by Lemma~\ref{LemXia02} we only need to prove that, if $G=HK$, then $(H,K)$ tightly contains $(X^\alpha,Y^\alpha)$ for some $(X,Y)$ in Table~\ref{TabOmega} and $\alpha\in\Aut(L)$. 
Suppose that $G=HK$. Then by~\cite[Theorem~5.1]{LWX} the triple $(L,H^{(\infty)},K^{(\infty)})$ lies in Table~\ref{TabInftyOmega}.

For $L=\Omega_7(3)$ computation in \magma~\cite{BCP1997} shows that $(H,K)$ tightly contains $(X^\alpha,Y^\alpha)$ for some $(X,Y)$ in Rows~1--4 or 7--9 of Table~\ref{TabOmega} and $\alpha\in\Aut(L)$. Thus we assume $L\neq\Omega_7(3)$ for the rest of the section.

\begin{table}[htbp]
\captionsetup{justification=centering}
\caption{$(L,H^{(\infty)},K^{(\infty)})$ for orthogonal groups in odd dimension}\label{TabInftyOmega}
\begin{tabular}{|l|l|l|l|l|l|}
\hline
Row & $L$ & $H^{(\infty)}$ & $K^{(\infty)}$ & Conditions\\
\hline
1 & $\Omega_{2m+1}(q)$ & $P.\SL_a(q^b)$ ($m=ab$), & $\Omega_{2m}^-(q)$ & $P\leqslant q^{m(m-1)/2}.q^m$\\
 & & $P.\Sp_a(q^b)$ ($m=ab$) & & \\
 \hline
2 & $\Omega_7(q)$ & $\Omega_6^+(q)$, $\Omega_6^-(q)$, $\Omega_5(q)$, $\Omega_4^-(q)$, & $\G_2(q)$ & \\
 & & $q^5{:}\Omega_5(q)$, $q^4{:}\Omega_4^-(q)$ & & \\ \hline
3 & $\Omega_7(3^f)$
& $\SU_3(3^f)$, ${^2}\G_2(3^f)'$ ($f$ odd) & $\Omega_6^+(3^f)$ & \\
\hline
4 & $\Omega_{13}(3^f)$ & $\PSp_6(3^f)$ & $\Omega_{12}^-(3^f)$ & \\
5 & $\Omega_{25}(3^f)$ & $\F_4(3^f)$ & $\Omega_{24}^-(3^f)$ & \\
\hline
6 & $\Omega_7(3)$ & $3^4{:}\A_5$, $3^5{:}2^4{:}\A_5$, $3^4{:}\A_6$ & $\G_2(3)$ & \\
7 & $\Omega_7(3)$ & $3^3{:}\SL_3(3)$, $\Omega_6^+(3)$, $\G_2(3)$ & $\A_9$, $\Sp_6(2)$ & \\
8 & $\Omega_7(3)$ & $2^6{:}\A_7$, $\A_8$, $\A_9$, $2.\PSL_3(4)$, $\Sp_6(2)$ & $3^{3+3}{:}\SL_3(3)$ & \\
9 & $\Omega_9(3)$ & $P.2^{1+4}.\A_5$, $P.\SL_2(5)$ & $\Omega_8^-(3)$ & $P\leqslant3^{6+4}$\\
10 & $\Omega_{13}(3)$ & $P.\SL_2(13)$ & $\Omega_{12}^-(3)$ & $P\leqslant3^{15+6}$\\
\hline
\end{tabular}
\vspace{3mm}
\end{table}

Recall that $R=q^{m(m-1)/2}.q^m$ is a special group with $\Z(R)=\Phi(R)=R'=q^{m(m-1)/2}$.

\begin{lemma}
Suppose that $H^{(\infty)}=P.S$ with $P\leqslant R$ and $S\leqslant T$, and $K^{(\infty)}=\Omega_{2m}^-(q)$, as in Row~$1$,~$9$ or~$10$ of Table~$\ref{TabInftyOmega}$. Then $(H,K)$ tightly contains some pair $(X,Y)$ in Row~$1$,~$4$,~$10$ or~$11$ of Table~$\ref{TabOmega}$.
\end{lemma}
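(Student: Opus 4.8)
The plan is to exploit the structural description already obtained in Lemma~\ref{LemOmega01} and Lemma~\ref{LemOmega02} together with the classification of factor pairs from~\cite[Theorem~5.1]{LWX} recorded in Table~\ref{TabInftyOmega}. Write $N=K\cap L$, so that $N$ has socle $K^{(\infty)}=\Omega_{2m}^-(q)$; since $\Omega_{2m}^-(q)=\N_1^-[L]^{(\infty)}$ is the stabilizer in $L$ of a nonsingular $1$-space of minus type, up to conjugation in $\Aut(L)$ (using Lemma~\ref{LemXia03} and Lemma~\ref{LemXia04}) I may assume $K\leqslant\GO(V)_v=Y$ in the notation of the Notation section, i.e.\ $K$ fixes the vector $v=e_1+\lambda f_1$. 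Likewise, since $H^{(\infty)}=P.S$ with $P\leqslant R$ and $S\leqslant T$ sits inside the parabolic $\Pa_m[L]=R{:}T$ stabilizing the maximal totally singular subspace $U$, after a further conjugation I may assume $H\leqslant\GO(V)_U$, so that $H^{(\infty)}\leqslant R{:}T$ and (by the description of $S$ in each of Rows~1, 9, 10) $S=\SL_a(q^b)$, or $\Sp_a(q^b)$, or one of the small groups $2^{1+4}.\A_5$, $\SL_2(5)$, $\SL_2(13)$ occurring for the sporadic $L$.

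\medskip

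First I would show $P=R$, i.e.\ that $H^{(\infty)}$ contains the full unipotent radical of $\Pa_m[L]$. This is the crux. Since $G=HK$ forces $|H|\,|K|\geqslant|L|$, and $|K|$ divides $|\GO(V)_v\cap L|\cdot|G/L|$ while $|\Omega_{2m}^-(q)|=|\Omega_{2m+1}(q)|/(q^m(q^m-1))$, a counting argument shows $|H^{(\infty)}|$ must be divisible by $q^{m(m+1)/2}=|R|$ (up to the bounded factor $|G/L|$ and the orders of the relevant $S$, which are coprime to $p$ except in the $\Sp_a(q^b)$ case where one argues with the $p'$-part separately). Combined with $P\leqslant R$ and $|R|=q^{m(m-1)/2}.q^m$, an order comparison pins down $P=R$; here one uses that $S$ acts on $R$ with the submodule structure making any proper $S$-invariant subgroup of $R$ too small. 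The case $S=\Sp_a(q^b)$ needs a touch more care because $\Sp_a(q^b)$ has order divisible by high powers of $p$, but the Sylow $p$-subgroup of $\Sp_a(q^b)$ is much smaller than $q^{ab(ab-1)/2}$ for $a\geqslant2$, so the argument survives; and the sporadic groups $2^{1+4}.\A_5$, $\SL_2(5)$, $\SL_2(13)$ are $p'$-groups, so there the bound is immediate. This identifies $H^{(\infty)}$ with the group $X$ (namely $R{:}S$) appearing in Row~1 (generic $q$, $S=\SL_a(q^b)$ or $\Sp_a(q^b)$), or Row~10 ($L=\Omega_9(3)$, $S=2^{1+4}.\A_5$ or $\SL_2(5)$ — matching the two rows of Row~10 of Table~\ref{TabOmega} up to the identification $\SL_2(5)\cong 2.\A_5$ and the possibilities $2.\Sy_5$, $8.\A_5$, $2^{1+4}.\A_5$ listed there), or Row~11 ($L=\Omega_{13}(3)$, $S=\SL_2(13)$).

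\medskip

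Having identified $H^{(\infty)}=X=R{:}S$ and $K\leqslant Y=\GO(V)_v$, it remains to check two things: that $(H,K)$ \emph{tightly} contains $(X^\alpha,Y^\alpha)$ — i.e.\ that $K^{(\infty)}$ really equals $\Omega_{2m}^-(q)=Y\cap L$, which is given; and that for each admissible $S$ the pair $(X,Y)$ does appear in Table~\ref{TabOmega}. For $S=\SL_a(q^b)$ or $\Sp_a(q^b)$ this is Lemma~\ref{LemOmega02}, which also supplies $Z=XY$. For the sporadic $S$ I invoke Lemma~\ref{LemOmega18} (for $\Omega_9(3)$) and Lemma~\ref{LemOmega19} (for $\Omega_{13}(3)$), which were already proved above, to confirm $Z=XY$ and to identify which conjugacy classes of $S$ inside $T=\SL_m(q)$ actually yield a factorization — in the $\Omega_9(3)$ case only two of the four classes of $2.\Sy_5$ work, which is why the statement of the lemma says $(H,K)$ tightly contains \emph{some} pair in those rows rather than every such pair. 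The one subtlety is that Table~\ref{TabInftyOmega} lists $H^{(\infty)}$ only up to isomorphism type, so I must rule out that a subgroup of $T$ abstractly isomorphic to the required $S$ but lying in a ``wrong'' conjugacy class could still give $G=HK$; this is handled exactly by the order computations in Lemmas~\ref{LemOmega18} and~\ref{LemOmega19} together with Lemma~\ref{LemOmega01}(c), which expresses $X\cap K$ in terms of $S_{U_1,e_1+U_1}$ and hence detects whether $|X|/|X\cap K|$ attains the index $|L|/|\Omega_{2m}^-(q)|=q^m(q^m-1)$ needed for a factorization. The main obstacle, as flagged, is the order/module argument forcing $P=R$, especially reconciling the $p$-parts in the $\Sp_a(q^b)$ case.
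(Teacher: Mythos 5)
There are two genuine gaps here. The first is that your conclusion omits Row~4 of Table~\ref{TabOmega}, which is part of the statement being proved: Row~1 of Table~\ref{TabInftyOmega} allows $P=1$, and for $m=3$, $a=3$, $b=1$ this gives $H^{(\infty)}=\SL_3(q)$, which need not lie in $\Pa_m[G]$ at all --- it can lie in a maximal subgroup $A$ of $G$ with $A\cap L=\G_2(q)$, in which case $(H,K)$ tightly contains the pair $(\SL_3(3^f),\Omega_6^-(3^f))$ of Row~4 rather than a pair of Row~1. The paper's proof accordingly starts by choosing a maximal overgroup $A\geqslant H$ and splitting, via \cite[Theorem~A]{LPS1990}, into the cases $A=\Pa_m[G]$ and ($m=3$ and) $A\cap L=\G_2(q)$, handling the latter with \cite{HLS1987}. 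You silently assume $H$ sits in the parabolic, so in particular your claim that $P=R$ is always forced is false.

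The second gap is that, even granting $H\leqslant\Pa_m[G]$, your counting argument for $P=R$ does not work. From $G=HK$ with $K\leqslant\N_1^-[G]$ one only obtains $|H|\geqslant|G|/|K|$, which is of order $q^m(q^m-1)$ up to a bounded factor; its $p$-part is at most about $q^m$, nowhere near $|R|=q^{m(m+1)/2}$, and in any case a lower bound on $|H|$ does not bound $|H|_p$ from below. (Also, $2^{1+4}.\A_5$ and $\SL_2(13)$ are not $p'$-groups for $p=3$, so even your fallback remark fails.) The paper's route is quite different: it uses Lemma~\ref{LemOmega01}(a) and the factorization to show that $H/P$ is transitive on the hyperplanes of $U$, invokes \cite[Theorem~1.2]{LWX-Linear} to identify the possible normal subgroups of $H/P$ and deduce that $H/P$ acts irreducibly on $R/R'$, excludes $P\leqslant R'$ by a two-sided estimate of the $p$-part of $|M\cap K\cap L|$ for an overgroup $M$ of $H$ with $M\cap L=R'{:}T.\tfrac{q-1}{2}$ (compared against the explicit structure of $(R{:}T)\cap L_v$), and only then concludes $PR'=R$, whence $P=R$ because $R'=\Phi(R)$ consists of non-generators. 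None of these steps is replaceable by the order comparison you sketch. The final part of your outline --- using Lemmas~\ref{LemOmega18} and~\ref{LemOmega19} to sort out which conjugacy classes of $S$ actually yield factorizations --- is consistent with the paper.
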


\begin{proof}
Without loss of generality assume that $K^{(\infty)}=L_v$. Then $L_v\leqslant K\leqslant G_{\langle v\rangle}=\N_1^-[G]$.
Let $A$ be a maximal subgroup of $G$ containing $H$. By~\cite[Theorem~A]{LPS1990} and~\cite{LPS1996}, either $A=\Pa_m[G]$ or $m=3$ and $A\cap L=\G_2(q)$.
For the latter case, since $A=H(A\cap K)$, we derive from~\cite{HLS1987} that $q=3^f$ and $H^{(\infty)}=\SL_3(q)$, as in Row~4 of Table~\ref{TabOmega}.
For the rest of the proof we assume that $A=\Pa_m[G]$ is the subgroup of $G$ stabilizing $U$.

From Lemma~\ref{LemOmega01}(a) we see that the induced group of $H\cap K$ on $U$ stabilizes a hyperplane of $U$. Since the induced group of $H$ on $U$ is $H/P$, it follows that $H/P$ acts transitively on the set of hyperplanes in $U$. Hence~\cite[Theorem~1.2]{LWX-Linear} implies that $H/P$ has a normal subgroup in one of the following sets:
\[
\{\SL_a(q^b),\Sp_a(q^b)\}\ (m=ab),\ \{2.\Sy_5,8.\A_5,2^{1+4}.\A_5\}\ (L=\Omega_9(3)),\ \{\SL_2(13)\}\ (L=\Omega_{13}(3)).
\] 
In particular, $H/P$ acts irreducibly on $R/R'=q^m$.

Suppose that $P\leqslant R'$. Then $H$ is contained in a subgroup $M$ of $G$ such that 
\[
M\cap L=R'{:}T.\tfrac{q-1}{2}.
\]
It follows from Lemma~\ref{LemOmega01} that
\begin{equation}\label{EqnOmega04}
(R{:}T)\cap L_v=(R\cap L_v)'.q^{m-1}.(q^{m-1}{:}\SL_{m-1}(q))
\end{equation}
with $(R\cap L_v)'=q^{(m-1)(m-2)/2}$. Since $T\cap L_v\geqslant\SL(U)_{e_1,U_1}=\SL_{m-1}(q)$, we have
\begin{equation}\label{EqnOmega05}
(R'{:}T)\cap L_v\geqslant(R\cap L_v)'{:}(T\cap L_v)\geqslant(R\cap L_v)'{:}\SL_{m-1}(q).
\end{equation}
Since $G=HK=MK$, we have
\[
|M\cap K\cap L|_p\leqslant\frac{|M|_p|K|_p}{|G|_p}\leqslant\frac{|M|_p|K\cap L|_p}{|L|_p}=\frac{|M|_p}{q^m}=\frac{|M\cap L|_p|G/L|_p}{q^m}<\frac{|M\cap L|_p}{q^{m-1}}
\]
and
\[
|M\cap K\cap L|_p\geqslant\frac{|K|_p|M\cap L|_p}{|G|_p}\geqslant\frac{|K\cap L|_p|M\cap L|_p}{|L|_p|G/L|_p}=\frac{|M\cap L|_p}{q^m|G/L|_p}>\frac{|M\cap L|_p}{q^{m+1}}.
\]
As $|M\cap K\cap L|_p=|(R'{:}T)\cap L_v|_p$ and $|M\cap L|_p=q^{2m-2}|(R\cap L_v)'{:}\SL_{m-1}(q)|_p$, we obtain
\[
q^{m-3}|(R\cap L_v)'{:}\SL_{m-1}(q)|_p<|(R'{:}T)\cap L_v|_p<q^{m-1}|(R\cap L_v)'{:}\SL_{m-1}(q)|_p.
\]
However, it is clear from~\eqref{EqnOmega04} and~\eqref{EqnOmega05} that $(R{:}T)\cap L_v$ does not have such a subgroup $(R'{:}T)\cap L_v$, a contradiction.

Thus we conclude that $P\nleqslant R'$, and so $PR'/R'=R/R'$ as $H/P$ acts irreducibly on $R/R'$. Consequently, $R=PR'=P\Phi(R)$. 
Since $\Phi(R)$ consists of the non-generators of $R$, this implies that $P=R$. 
Hence $(H,K)$ tightly contains some pair $(X,Y)$ in Row~1,~10 or~11 of Table~\ref{TabOmega}, according to which normal subgroup $H/P$ has.
\end{proof}

\begin{lemma}
Suppose that $(L,H^{(\infty)},K^{(\infty)})$ lies in Row~$2$ of Table~$\ref{TabInftyOmega}$. Then $(H,K)$ tightly contains some pair $(X,Y)$ in Row~$2$ of Table~$\ref{TabOmega}$.
\end{lemma}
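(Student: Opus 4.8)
The plan is to combine the Liebeck--Praeger--Saxl classification of maximal factorizations with the explicit factorizations already established in Section~\ref{SecOmega01}. First I would use Lemmas~\ref{LemXia03} and~\ref{LemXia04}, together with the fact that the two $L$-classes of subgroups $\G_2(q)$ are fused in $\GO(V)\leqslant\Aut(L)$ (see~\cite[Table~8.40]{BHR2013}), to replace $(H,K)$ by a suitable $\Aut(L)$-conjugate for which $K^{(\infty)}$ equals the subgroup $Y=\G_2(q)$ of $L$ appearing in Lemmas~\ref{LemOmega03}--\ref{LemOmega06}; then $Y=Y^{(\infty)}\leqslant K$, so it remains to produce $(X,Y)$ in Row~$2$ of Table~\ref{TabOmega} with $X$ perfect, together with $\alpha\in\N_{\Aut(L)}(Y)$, such that $X^\alpha=H^{(\infty)}$ (so that $(H,K)$ tightly contains $(X^\alpha,Y^\alpha)=(X^\alpha,Y)$). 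Picking maximal subgroups $A\geqslant H$ and $B\geqslant K$ of $G$, one has $G=HK\subseteq AB$, so $G=AB$ is a maximal factorization; since $\G_2(q)$ is maximal in $L$ we get $B\cap L=\G_2(q)$, and then (using $L\neq\Omega_7(3)$, hence $q\geqslant5$) \cite[Theorem~A]{LPS1990} forces $A\cap L\in\{\Pa_1[L],\N_1^+[L],\N_1^-[L]\}$.

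Next I would run through these three possibilities for $A$. In each case $H^{(\infty)}\leqslant A^{(\infty)}$, where $A^{(\infty)}=\Omega_6^\varepsilon(q)$ when $A\cap L=\N_1^\varepsilon[L]$ and $A^{(\infty)}=q^5{:}\Omega_5(q)$ when $A\cap L=\Pa_1[L]$; an order/structure comparison then cuts the list of candidates for $H^{(\infty)}$ in Row~$2$ of Table~\ref{TabInftyOmega} down to the groups that genuinely embed in $A^{(\infty)}$. For the candidates $H^{(\infty)}\in\{\Omega_6^\varepsilon(q),\Omega_5(q),q^5{:}\Omega_5(q),q^4{:}\Omega_4^-(q)\}$ I would set $X=H^{(\infty)}$, which is precisely one of the groups occurring in Lemmas~\ref{LemOmega03}--\ref{LemOmega06}; from the equality $L=XY$ proved there, the conjugation action of $Y=\G_2(q)$ on the cosets $X\backslash L$ is transitive, hence so is its action on the $L$-class of subgroups isomorphic to $X$. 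Combined with the fact that this $L$-class is unique (for $X=q^4{:}\Omega_4^-(q)$ this uses Lemma~\ref{LemOmega20} to pin down the Levi factor $\Omega_4^-(q)$, together with the observation that the relevant $q^4$ is the unique $\Omega_4^-(q)$-submodule of the radical $R$ of that order), this yields $H^{(\infty)}=X^\alpha$ for some $\alpha\in Y\leqslant\N_{\Aut(L)}(Y)$, as required.

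The remaining work, and the main obstacle, is to exclude the two spurious candidates $H^{(\infty)}=\Omega_4^-(q)$ and $H^{(\infty)}=\Omega_5(q)$ with $A\cap L=\Pa_1[L]$; note that $\Omega_4^-(q)$ does not occur in Row~$2$ of Table~\ref{TabOmega}, so this case really must be ruled out rather than matched. For $H^{(\infty)}=\Omega_5(q)$ inside $\Pa_1[L]$: since $\Omega_5(q)$ acts on the radical $R\cong q^5$ as the natural module, it has no nonzero fixed vectors, so $H^{(\infty)}\cap R=1$, which forces $H$ into the normalizer of a Levi-type $\Omega_5(q)$, hence into the stabilizer $\N_2^+[G]$ of a nondegenerate $2$-subspace of plus type; but then $G=\N_2^+[G]B$ would be a maximal factorization of $G$ with a factor $\G_2(q)$, contrary to \cite[Theorem~A]{LPS1990}. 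For $H^{(\infty)}=\Omega_4^-(q)$ (whether $A\cap L$ is $\N_1^\varepsilon[L]$ or $\Pa_1[L]$): the point is that $\Omega_4^-(q)$ has only a $1$-dimensional fixed space on the natural module, so any $\Omega_4^-(q)$-invariant $p$-subgroup of the relevant unipotent group that it does not centralize contains a subgroup $q^4$ on which $q^4{:}\Omega_4^-(q)$ is perfect; since $H^{(\infty)}=\Omega_4^-(q)$ this bounds $|H|_p$ (by roughly $q^3$), while $(A\cap Y)^{(\infty)}$ is $\SL_3^\varepsilon(q)$ or $[q^5]{:}\SL_2(q)$ by Lemma~\ref{LemOmega03} or~\ref{LemOmega05}, which bounds $|A\cap K|_p$; comparing $p$-parts of the two sides of the factorization $A=H(A\cap K)$ (passing modulo $R$ when $A\cap L=\Pa_1[L]$) then makes $|A|_p$ smaller than it is, the contradiction coming down to $q=p^f$ being much larger than any fixed power of $f$. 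Once these exclusions are in place the three cases exhaust the possibilities and the lemma follows; the delicate parts are the $p$-part bookkeeping in the $\Omega_4^-(q)$ case and making the identification $H^{(\infty)}=X^\alpha$ with $\alpha$ normalizing $Y$ precise.
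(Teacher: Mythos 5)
Your overall architecture (reduce to a maximal overgroup $A$ of $H$ via \cite[Theorem~A]{LPS1990}, match the surviving candidates to Lemmas~\ref{LemOmega03}--\ref{LemOmega06} by a conjugacy/transitivity argument, and separately exclude $H^{(\infty)}=\Omega_4^-(q)$) is the same as the paper's, and your observation that $Z=XY$ makes $Y$ transitive on the $L$-class of $X$ is a legitimate way to handle the conjugation. But two of your key steps fail. First, your reading of the maximal factorization classification is wrong: the paper's list of possible $A$ is $\{\Pa_1[G],\N_1^+[G],\N_1^-[G],\N_2^+[G],\N_2^-[G]\}$, and the subgroups $\N_2^\pm[G]$ cannot be discarded --- indeed Lemma~\ref{LemOmega04} gives $\Omega_7(q)=\Omega_5(q)\,\G_2(q)$ with $\Omega_5(q)\leqslant\N_2^\varepsilon[G]$, so $G=\N_2^\varepsilon[G]B$ \emph{is} a maximal factorization with $B\cap L=\G_2(q)$. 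Consequently your exclusion of ``$H^{(\infty)}=\Omega_5(q)$ inside $\Pa_1[L]$'' via ``$G=\N_2^+[G]B$ contradicts Theorem~A'' is not a contradiction at all (and that case should be matched to Lemma~\ref{LemOmega04}, not excluded), and your case analysis for $H^{(\infty)}=\Omega_4^-(q)$ omits $A=\N_2^\pm[G]$ entirely.

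Second, and more seriously, the ``$p$-part bookkeeping'' that is supposed to eliminate $H^{(\infty)}=\Omega_4^-(q)$ cannot produce a contradiction, because the relevant $p$-parts match exactly: $H\leqslant\N_{G}(H^{(\infty)})$ gives $|H\cap L|_p\leqslant|\Omega_3(q)\times\Omega_4^-(q)|_p=q^3$, while $|\G_2(q)|_p=q^6$ and $|\Omega_7(q)|_p=q^9=q^3\cdot q^6$; likewise $q^3\cdot|\SL_3^\varepsilon(q)|_p=q^6=|\N_1^\varepsilon[L]|_p$ and $q^3\cdot|[q^5]{:}\SL_2(q)|_p=q^9=|\Pa_1[L]|_p$. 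There is no order-theoretic obstruction here, so no amount of comparing $f$ with powers of $p$ will close the gap. The exclusion genuinely requires a structural input: the paper shows that in every case $H$ is forced into $\N_3^-[G]$ (using Lemma~\ref{LemOmega20} when $A=\Pa_1[G]$, and chains such as $H^{(\infty)}<\N_2^-[A^{(\infty)}]<\N_3^-[G]$ otherwise) and then invokes the fact from \cite[Theorem~A]{LPS1990} that $G\neq\N_3^-[G]K$, which is a statement about the group and not about orders. You would need to replace your counting argument with something of this kind. (A smaller issue: your uniqueness claims for the $L$-classes of $\Omega_6^\pm(q)$, $\Omega_5(q)$ and of the $q^4$-submodule in the $q^4{:}\Omega_4^-(q)$ case are asserted rather than proved, whereas the paper leans on the explicit description of $H^{(\infty)}$ coming from \cite[Theorem~5.1]{LWX} and on Lemma~\ref{LemOmega20}.)
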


\begin{proof}
As $(L,H^{(\infty)},K^{(\infty)})$ lies in Row~2 of Table~\ref{TabInftyOmega}, we have $m=3$, $K^{(\infty)}=\G_2(q)$ and 
\[
H^{(\infty)}\in\{\Omega_6^+(q),\Omega_6^-(q),\Omega_5(q),\Omega_4^-(q),q^5{:}\Omega_5(q),q^4{:}\Omega_4^-(q)\}.
\]
Let $A$ be a maximal subgroup of $G$ containing $H$. By~\cite[Theorem~A]{LPS1990} and~\cite{LPS1996}, we have
\[
A\in\{\Pa_1[G],\N_1^+[G],\N_1^-[G],\N_2^+[G],\N_2^-[G]\}.
\]
If $H^{(\infty)}=\Omega_6^+(q)$, $\Omega_6^-(q)$ or $\Omega_5(q)$, then $(H,K)$ tightly contains the pair $(X,Y)$ in Lemma~\ref{LemOmega03} or~\ref{LemOmega04}. If $H^{(\infty)}=q^5{:}\Omega_5(q)$, then $(H,K)$ tightly contains the pair $(X,Y)$ in Lemma~\ref{LemOmega05}.

Next assume that $H^{(\infty)}=\Omega_4^-(q)$. If $A=\Pa_1[G]$, then by Lemma~\ref{LemOmega20} we may assume that $H^{(\infty)}$ pointwise fixes a nondegenerate $3$-subspace of $V$ and so $H\leqslant\N_3^-[G]$. However,~\cite[Theorem~A]{LPS1990} shows that $G\neq\N_3^-[G]K$. Thus $H\nleqslant\Pa_1[G]$. 
If $A=\N_1^+[G]$, then $A^{(\infty)}=\Omega_6^+(q)$ and $H^{(\infty)}<\N_2^-[A^{(\infty)}]<\N_3^-[G]$, which implies that $H\leqslant\N_3^-[G]$ and so $G=\N_3^-[G]K$, a contradiction. If $A=\N_1^-[G]$, then $A^{(\infty)}=\Omega_6^-(q)$, and either $H^{(\infty)}<\N_2^+[A^{(\infty)}]<\N_3^-[G]$ or $H^{(\infty)}<\Pa_1[A^{(\infty)}]<\Pa_1[G]$. However, neither of them is possible as they lead to $H\leqslant\N_3^-[G]$ and $H\leqslant\Pa_1[G]$ respectively.
If $A=\N_2^+[G]$ or $\N_2^-[G]$, then $A^{(\infty)}=\Omega_5(q)$ and $H^{(\infty)}<\N_1^-[A^{(\infty)}]<\N_3^-[G]$, again a contradiction. 

Now assume that $H^{(\infty)}=q^4{:}\Omega_4^-(q)$. Then $A=\Pa_1[G]$ or $\N_1^-[G]$. For the latter case we have $A^{(\infty)}=\Omega_6^-(q)$ and $H^{(\infty)}<\Pa_1[A^{(\infty)}]<\Pa_1[G]$, which implies that $H\leqslant\Pa_1[G]$. Hence it always holds $H\leqslant\Pa_1[G]$. Viewing Lemma~\ref{LemOmega20} we then conclude that $(H,K)$ tightly contains the pair $(X,Y)$ in Lemma~\ref{LemOmega06}.
\end{proof}

If $(L,H^{(\infty)},K^{(\infty)})$ does not lie in Rows~1--2 or~9--10 of Table~\ref{TabInftyOmega}, which are dealt with in the above two lemmas, then it lies in Rows~3--5 of the table as $L\neq\Omega_7(3)$. For these rows, the pair $(H,K)$ tightly contains $(X,Y)=(H^{(\infty)},K^{(\infty)})$ in Rows~3, 5 and~6 of Table~\ref{TabOmega}. This completes the proof of Theorem~\ref{ThmOmega}.

\section*{Acknowledgments}
The first author acknowledges the support of NNSFC grants no.~11771200 and no.~11931005. The second author acknowledges the support of NNSFC grant no.~12061083.

\end{document}